\documentclass{compositio}

\usepackage[dvipsnames]{xcolor}

\newcommand\myshade{85}
\colorlet{mylinkcolor}{Red}
\colorlet{mycitecolor}{Green}
\colorlet{myurlcolor}{Plum}

\usepackage{lmodern}

\usepackage[utf8]{inputenc}

\usepackage[inline,shortlabels]{enumitem}

\usepackage{amsmath,amssymb}
\usepackage{mathtools}
\usepackage{thmtools,thm-restate}
\usepackage{mathrsfs}
\usepackage{colonequals}
\usepackage[retainorgcmds]{IEEEtrantools}
\usepackage{tikz,tikz-cd}
\usetikzlibrary{decorations.pathmorphing}

\usepackage[pdfusetitle]{hyperref}
\usepackage[hyphenbreaks]{breakurl}

\hypersetup{
  linkcolor  = mylinkcolor!\myshade!black,
  citecolor  = mycitecolor!\myshade!black,
  urlcolor   = myurlcolor!\myshade!black,
  colorlinks = true,
}

\renewcommand{\Re}{\operatorname{Re}}
\renewcommand{\Im}{\operatorname{Im}}

\newcommand{\N}{\mathbf{N}}
\newcommand{\Z}{\mathbf{Z}}
\newcommand{\Q}{\mathbf{Q}}
\newcommand{\R}{\mathbf{R}}
\newcommand{\C}{\mathbf{C}}
\newcommand{\G}{\mathbf{G}}
\newcommand{\PP}{\mathbf{P}}
\newcommand{\A}{\mathbf{A}}
\newcommand{\LL}{\mathbf{L}}

\newcommand{\HS}{\underline{\mathrm{HS}}}
\newcommand{\Rr}{\mathrm{R}}
\newcommand{\suchthat}{\;\ifnum\currentgrouptype=16 \middle\fi\vert\;}

\DeclareMathOperator{\an}{an}
\DeclareMathOperator{\Aut}{Aut}
\DeclareMathOperator{\cc}{c}
\DeclareMathOperator{\ch}{ch}
\DeclareMathOperator{\colim}{colim}
\DeclareMathOperator{\Db}{D}
\DeclareMathOperator{\der}{der}
\DeclareMathOperator{\Diff}{Diff}
\DeclareMathOperator{\Eig}{Eig}

\DeclareMathOperator{\Ext}{Ext}
\DeclareMathOperator{\Fix}{Fix}
\DeclareMathOperator{\Gr}{Gr}
\DeclareMathOperator{\Hh}{H}
\DeclareMathOperator{\Hom}{Hom}
\DeclareMathOperator{\id}{id}
\DeclareMathOperator{\im}{im}
\DeclareMathOperator{\NS}{NS}
\DeclareMathOperator{\Oo}{O}
\DeclareMathOperator{\per}{per}
\DeclareMathOperator{\Pic}{Pic}
\DeclareMathOperator{\pr}{pr}
\DeclareMathOperator{\rk}{rk}

\DeclareMathOperator{\Spec}{Spec}
\DeclareMathOperator{\spin}{spin}
\DeclareMathOperator{\td}{td}
\DeclareMathOperator{\tr}{tr}
\newcommand{\Pics}{\mathscr{P}ic}
\newcommand{\Tw}{\mathscr{T}\!w}

\newcommand{\sTwr}{\mathrm{s}T\!\mathrm{w}}

\interdisplaylinepenalty=500

\theoremstyle{plain}
\newtheorem{thm}{Theorem}[section]
\newtheorem{mainthm}{Theorem}

\newtheorem{prop}[thm]{Proposition}
\newtheorem{lem}[thm]{Lemma}
\newtheorem{cor}[thm]{Corollary}
\newtheorem{maincor}[mainthm]{Corollary}

\theoremstyle{definition}
\newtheorem{defn}[thm]{Definition}
\newtheorem{exmp}[thm]{Example}

\theoremstyle{remark}
\newtheorem{rem}[thm]{Remark}

\begin{document}

\title{Autoequivalences of twisted K3 surfaces}
\author{Emanuel Reinecke}
\email{\href{mailto:reinec@umich.edu}{reinec@umich.edu}}
\address{Department of Mathematics\\University of Michigan\\530 Church St\\Ann Arbor, MI 48109, USA}

\classification{14J28 (primary), 14F05, 14D23, 11E12 (secondary).}
\keywords{Twisted K3 surfaces, derived categories, twisted Hodge structures, deformations.}
\thanks{This material is based upon work supported by the National Science Foundation under Grant No.\ DMS-1501461 and by the Studienstiftung des deutschen Volkes.}

\begin{abstract}
  Derived equivalences of twisted K3 surfaces induce twisted Hodge isometries between them;
  that is, isomorphisms of their cohomologies which respect certain natural lattice structures and Hodge structures.
  We prove a criterion for when a given Hodge isometry arises in this way.
  In particular, we describe the image of the representation which associates to any autoequivalence of a twisted K3 surface its realization in cohomology:
  this image is a subgroup of index one or two in the group of all Hodge isometries of the twisted K3 surface.
  We show that both indices can occur.
\end{abstract}

\maketitle

\section{Introduction}\label{sect:intro}

Let $X$ be a K3 surface.
Although the diffeomorphism group of $X$ remains a mysterious object, it is partially understood via its cohomology representation.
To elaborate, the intersection pairing gives a lattice structure of signature $(3,19)$ on $\Hh^2(X,\Z)$.
Denote by $\Oo\bigl(\Hh^2(X,\Z)\bigr)$ the orthogonal group of this lattice.
We obtain a representation
\[ \Diff(X) \to \Oo\bigl(\Hh^2(X,\Z)\bigr) \]
by associating to each diffeomorphism its action on cohomology.

Results of Borcea and Donaldson \cite{MR849050,MR1066174} completely describe the image of this representation.
Let $V \subset \Hh^2(X,\R)$ be a maximal positive-definite subspace.
Composing an isometry $\varphi \in \Oo\bigl(\Hh^2(X,\Z)\bigr)$ with the orthogonal projection onto $V$ gives an isometry of $V$.
We call $\varphi$ \emph{signed} if it induces orientation-preserving isometries on one, or equivalently every, maximal positive-definite subspace of $\Hh^2(X,\R)$; cf.\ Definition~\ref{defn:or_pres}.
The image of $\Diff(X) \to \Oo\bigl(\Hh^2(X,\Z)\bigr)$ is the group of signed isometries, which we denote by $\Oo^+\bigl(\Hh^2(X,\Z)\bigr)$.

Let $\Db(X)$ be the bounded derived category of coherent sheaves on $X$.
Mirror symmetry suggests a similar picture for the group of autoequivalences of $\Db(X)$; cf.\ \cite[Conj.~5.4]{MR1866907}.
From here on, assume that $X$ is projective, in which case every autoequivalence of $\Db(X)$ is a Fourier--Mukai transform \cite[Thm.~2.2]{MR1465519}.

In \cite{MR893604}, Mukai finds an analog of the above cohomology representation for $\Aut\bigl(\Db(X)\bigr)$.
The Mukai lattice of a K3 surface $X$ is the pure Hodge structure of weight $2$
\[ \widetilde{\Hh}(X,\Z) \colonequals \Hh^0(X,\Z)(-1) \oplus \Hh^2(X,\Z) \oplus \Hh^4(X,\Z)(1), \]
together with a lattice structure of signature $(4,20)$ given by the Mukai pairing
\[ \langle \phi , \psi \rangle \colonequals \int_X (\phi_2 \wedge \psi_2 - \phi_0 \wedge \psi_4 - \phi_4 \wedge \psi_0). \]
Mukai constructs a representation
\[ \Aut\bigl(\Db(X)\bigr) \to \Oo\bigl(\widetilde{\Hh}(X,\Z)\bigr), \]
which associates to every Fourier--Mukai equivalence $\Phi_P \in \Aut\bigl(\Db(X)\bigr)$ a cohomological realization $\Phi^{\Hh}_P$.
Work of various authors \cite{MR2047679,ploog-thesis,MR2553878} shows that the image of this representation is the group $\Oo^+\bigl(\widetilde{\Hh}(X,\Z)\bigr)$ of signed Hodge isometries, which is defined as before.
See \cite[\S 16.3]{huybrechts2016lectures} for details.

\subsection*{The setup}

A \emph{twisted K3 surface} is a pair $(X,\alpha)$, consisting of a K3 surface $X$ and a torsion class $\alpha \in \Hh^2(X,\mathcal{O}^*_X)$.
From the viewpoint of mirror symmetry, it is natural to try to extend the previous results to twisted K3 surfaces; cf.\ \cite[Conj.~5.5.5]{MR2700538} and \cite[Conj.~4.9]{MR2179782}.
Huybrechts and Stellari \cite{MR2179782} generalize Mukai's construction to a twisted cohomology representation
\[ \Aut\bigl(\Db(X,\alpha)\bigr) \to \Oo\bigl(\widetilde{\Hh}(X,B,\Z)\bigr), \]
again using cohomological realizations of twisted Fourier--Mukai functors.
We briefly introduce the objects that appear in this representation and refer once more to \cite[Ch.~16]{huybrechts2016lectures} for details.

An $\alpha$-twisted sheaf on $X$ is, loosely speaking, a generalized sheaf which satisfies the cocycle condition up to a twist by $\alpha$.
We denote by $\Db(X,\alpha)$ the bounded derived category of the abelian category of $\alpha$-twisted coherent sheaves.
To define a twisted version of the Mukai lattice, we pick a $B$-field lift of $\alpha$;
that is, a class $B \in \Hh^2(X,\Q)$ such that $\exp\bigl(B^{0,2}\bigr) = \alpha$.
The \emph{$B$-twisted Hodge structure} $\widetilde{\Hh}(X,B,\Z)$ has the same underlying lattice as $\widetilde{\Hh}(X,\Z)$.
Its Hodge structure of weight $2$ is the image of Mukai's Hodge structure under the multiplication by $\exp(B) \colonequals 1 + B + \frac{B\,\wedge\,B}{2}$.
Two different $B$-field lifts of $\alpha$ induce isometric twisted Hodge structures.

\subsection*{Our results}

In this note, we describe the image of $\Aut\bigl(\Db(X,\alpha)\bigr) \to \Oo\bigl(\widetilde{\Hh}(X,B,\Z)\bigr)$.
As in the untwisted case, every autoequivalence is of Fourier--Mukai type \cite[Thm.~1.1]{MR2329310}.
Thus, it suffices to describe the cohomological realizations of twisted Fourier--Mukai equivalences.
We do this more generally for possibly different Fourier--Mukai partners.

For every K3 surface $X$ and $B \in \Hh^2(X,\Q)$, we can make natural choices of compatible orientations on all maximal positive-definite subspaces of $\widetilde{\Hh}(X,B,\R)$; see Example~\ref{exmp:pos_sign_str}.
An isometry between two twisted Hodge structures is called signed if the induced map on positive-definite subspaces is orientation-preserving; see Definition~\ref{defn:or_pres}.
Huybrechts and Stellari show that every signed Hodge isometry lifts to a twisted Fourier--Mukai equivalence \cite[Thm.~0.1]{MR2310257}.
We prove the converse.
\begin{restatable}{mainthm}{norhi}\label{thm:norhi}
  Let $(X,\alpha)$ and $(X',\alpha')$ be two complex, projective twisted K3 surfaces with $B$-field lifts $B \in \Hh^2(X,\Q)$ and $B' \in \Hh^2(X',\Q)$.
  Let $\Phi_P \colon \Db(X,\alpha) \xrightarrow{\sim} \Db(X',\alpha')$ be a twisted Fourier--Mukai equivalence.
  Then the induced Hodge isometry $\Phi^{\Hh}_P \colon \widetilde{\Hh}(X,B,\Z) \xrightarrow{\sim} \widetilde{\Hh}(X',B',\Z)$ is signed.
\end{restatable}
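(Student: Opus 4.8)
The plan is to deduce the statement from the untwisted orientation theorem (\cite{MR2047679,ploog-thesis,MR2553878}, see also \cite[\S 16.3]{huybrechts2016lectures}) by a deformation argument, in the spirit of the twisted derived global Torelli theorem \cite{MR2310257,MR2329310}. As a first reduction, recall that the signed Hodge isometries form the kernel of an orientation character on $\Oo\bigl(\widetilde{\Hh}(X,B,\Z)\bigr)$, hence a subgroup of index one or two in the group of all Hodge isometries; it therefore suffices to show that $\Phi^{\Hh}_P$ does not reverse the natural orientations of Example~\ref{exmp:pos_sign_str}. The soft input is that ``orientation-reversing'' is locally constant in families: if $\mathcal{X}\to S$ and $\mathcal{X}'\to S$ are smooth projective families of K3 surfaces over a connected base, each equipped with a twist and a $B$-field lift restricting to $(X,\alpha,B)$ and $(X',\alpha',B')$ over a point $0\in S$, and if $\mathcal{P}$ is a relative twisted kernel inducing a twisted Fourier--Mukai equivalence $\Phi_{P_s}\colon\Db(\mathcal{X}_s,\alpha_s)\xrightarrow{\sim}\Db(\mathcal{X}'_s,\alpha'_s)$ on every fibre, then $\Phi^{\Hh}_{P_0}$ is signed as soon as $\Phi^{\Hh}_{P_s}$ is signed for a single $s\in S$. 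Indeed the twisted Mukai lattices form a local system over $S$, the $B$-fields and hence the $B$-twisted Hodge structures and the chosen compatible orientations vary continuously, and the sign of the determinant of the composite of $\Phi^{\Hh}_{P_s}$ with orthogonal projection onto a continuously varying maximal positive-definite subspace is a continuous $\{\pm1\}$-valued function of $s$.

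The heart of the argument is then to connect $(X,\alpha,X',\alpha',P)$, through such a family, to a situation where the conclusion is already known. Following the untwisted case, I would deform the pair of twisted K3 surfaces together with the kernel $P\in\Db\bigl(X\times X',\alpha^{-1}\boxtimes\alpha'\bigr)$: the obstruction to deforming $P$ along a first-order deformation of the two twisted K3 surfaces lies in $\Ext^2(P,P)$, and the deformation-theoretic analysis (Toda, Huybrechts--Thomas, adapted to twisted sheaves) shows that $P$ deforms sideways precisely over the sub-locus of the product deformation space on which the fixed lattice isometry $\Phi^{\Hh}_P$ remains compatible with the varying $B$-twisted Hodge structures; on that sub-locus the deformed kernel still induces an equivalence, since the $\Ext$-conditions between structure sheaves of points that characterise Fourier--Mukai equivalences are open in families. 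Using surjectivity of the twisted period map \cite{MR2310257}, I would deform the $B$-twisted Hodge structure of $(X,B)$ --- and, via the fixed isometry $\Phi^{\Hh}_P$, that of $(X',B')$ --- so as to keep all members projective and to make the $B$-fields cohomologous to integral classes, i.e.\ to untwist $\alpha$ and $\alpha'$. One then arrives at a complex projective family whose general member is an honest K3 surface carrying a Fourier--Mukai equivalence deforming $\Phi_P$; by the untwisted orientation theorem this limiting equivalence is signed, and by the deformation invariance above so is $\Phi^{\Hh}_P$.

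If one prefers to stay within the twisted world, the same deformation can instead be steered to a \emph{generic} twisted K3 surface $(X_1,\alpha_1)$, which one may further arrange to carry no $(-2)$-class in its $B_1$-twisted N\'eron--Severi lattice. On such a surface the twisted Fourier--Mukai equivalences are generated by the shift $[1]$ (acting as $-\id$, whose determinant on a positive-definite $4$-space is $(-1)^4=+1$), by tensoring with a line bundle (acting unipotently, hence lying in the identity component of the real orthogonal group), and by automorphisms of $(X_1,\alpha_1)$ (which fix the twisted period up to a scalar and preserve the orientation of the positive cone); each of these is visibly signed, and hence so is $\Phi^{\Hh}_P$. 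If one instead allows $(-2)$-classes, one must additionally check that spherical twists along spherical twisted objects are signed, which is a single model computation exactly as in the untwisted theorem.

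The step I expect to be the main obstacle is the deformation of the twisted kernel: one has to pin down precisely the locus of the deformation space over which $P$ survives and verify that it still reaches a member on which $\alpha$ has become trivial --- or on which the list of generators above is complete. This is already the technical core of the untwisted theorem of \cite{MR2553878}, and in the twisted setting it is further complicated by the need to carry the $B$-fields and the identifications of twisted Mukai lattices consistently along the family; the spherical-twist orientation computation, if it cannot be deformed away, is the one genuinely nontrivial calculation.
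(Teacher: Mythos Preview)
Your overall architecture matches the paper's: deform the kernel together with the twisted surfaces until signedness is known, then propagate back via local constancy of the orientation (your sketch of this is essentially Proposition~\ref{prop:signed_deform}). But the paper's endpoint is neither of yours. It does not reduce to the untwisted theorem of \cite{MR2553878} --- indeed one of its selling points is that it \emph{reproves} the untwisted case --- nor does it rely on an explicit generator list for autoequivalences. Instead it invokes \cite[Cor.~3.19]{MR2388559}, which establishes signedness directly whenever neither side has spherical objects; your option~(b) is aimed roughly there, but the generator list you assert is precisely the nontrivial content of that reference, proved via stability conditions.

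The substantive gap is in how to reach a fibre without spherical objects. You assume the deformation can always be ``steered'' to such a point, but with the $B$-field representative $B'\in\Lambda_\Q$ held fixed and only the period varying, this can fail for \emph{every} fibre: already when $B'=0$ the structure sheaf is spherical on every K3 surface, and similar obstructions persist for many other $B'$. The paper's remedy is a two-step deformation. First deform within the given $B'$-family to a point on one of infinitely many special divisors $Z_m\subset M_d[n]$ on which $B'$ and a second, carefully chosen $B$-field $B_m$ induce the same Brauer class; then switch to the $B_m$-family, whose generic twisted N\'eron--Severi lattice $L^{B_m}_d$ is engineered (via Lemma~\ref{lem:inters_form_LBd} and a quadratic-reciprocity computation) to represent no $-2$. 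This change of $B$-field mid-deformation is the idea your proposal is missing. Two further points the paper handles that you do not anticipate: an initial reduction (Corollary~\ref{cor:trivial_det}) to the case $\rk P_x=0$ and $\det P_x\simeq\mathcal{O}_{X'}$, which is what makes the relevant moduli space smooth (Proposition~\ref{prop:Twr_smooth}); and a different deformation mechanism for the kernel, reinterpreted as an open immersion $X\hookrightarrow\sTwr_{\mathcal{X}'/\C}$ (Lemma~\ref{lem:open_immers}) and deformed via the Hilbert stack (Proposition~\ref{prop:deformation}) rather than by the first-order obstruction calculus you invoke. Your option~(a) has the separate difficulty that trivialising $\alpha'$ along the family does not automatically trivialise $\alpha$, so even granting \cite{MR2553878} you would still owe a mixed-twist statement.
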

In particular, Theorem~\ref{thm:norhi} gives a new argument for the untwisted statement, which was treated in \cite{MR2553878}.
It avoids the passage to rigid-analytic varieties and instead relies on the twisted theory.
The proof of Theorem~\ref{thm:norhi}, which makes up the bulk of this paper, is outlined below.
It uses in an essential way that we allow for different Fourier--Mukai partners.
Theorem~\ref{thm:norhi} and \cite[Thm.~0.1]{MR2310257} together imply the following strong form of the derived global Torelli theorem for twisted K3 surfaces.
\begin{mainthm}\label{thm:stdgt}
  In the notation of Theorem~\ref{thm:norhi}, a Hodge isometry $\varphi \colon \widetilde{\Hh}(X,B,\Z) \xrightarrow{\sim} \widetilde{\Hh}(X',B',\Z)$ is the cohomological realization of a twisted Fourier--Mukai equivalence $\Phi_P \colon \Db(X,\alpha) \xrightarrow{\sim} \Db(X',\alpha')$ if and only if $\varphi$ is signed.
\end{mainthm}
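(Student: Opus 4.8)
The plan is to obtain Theorem~\ref{thm:stdgt} as a formal consequence of Theorem~\ref{thm:norhi} and the lifting theorem of Huybrechts and Stellari, so that no new mathematical work is needed beyond the proof of Theorem~\ref{thm:norhi} itself. Concretely, Theorem~\ref{thm:stdgt} asserts an ``if and only if'', and the two implications are precisely the two results already in hand.

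For the implication ``only if'', suppose that $\varphi = \Phi^{\Hh}_P$ for some twisted Fourier--Mukai equivalence $\Phi_P \colon \Db(X,\alpha) \xrightarrow{\sim} \Db(X',\alpha')$. Then Theorem~\ref{thm:norhi} applies directly and shows that $\varphi$ is signed. For the implication ``if'', suppose that $\varphi \colon \widetilde{\Hh}(X,B,\Z) \xrightarrow{\sim} \widetilde{\Hh}(X',B',\Z)$ is a signed Hodge isometry. By \cite[Thm.~0.1]{MR2310257}, every signed Hodge isometry between the twisted Mukai lattices of two complex projective twisted K3 surfaces is the cohomological realization of a twisted Fourier--Mukai equivalence; in particular there exists $\Phi_P \colon \Db(X,\alpha) \xrightarrow{\sim} \Db(X',\alpha')$ with $\Phi^{\Hh}_P = \varphi$.

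The only points that require attention are bookkeeping ones, which I would dispatch in a sentence or two. First, one must make sure that the notion of ``signed'' entering Theorem~\ref{thm:norhi} and the orientation hypothesis in \cite[Thm.~0.1]{MR2310257} are the \emph{same} condition relative to the natural orientations of the maximal positive-definite subspaces of $\widetilde{\Hh}(X,B,\R)$ fixed in Example~\ref{exmp:pos_sign_str}; this is already built into the conventions set up above. Second, one should observe that the standard ambiguities in the choice of kernel $P$ are harmless: for instance, composing with the shift $[1]$ changes the cohomological realization by $-\id$, which is itself signed (it has determinant $1$ on a positive-definite $4$-space), so such ambiguities affect neither membership in the image nor the conclusion. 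I therefore expect no genuine obstacle in this step: the entire mathematical content of Theorem~\ref{thm:stdgt} is concentrated in Theorem~\ref{thm:norhi}, whose proof occupies the remainder of the paper, together with the cited lifting result of Huybrechts and Stellari.
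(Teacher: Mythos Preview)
Your proposal is correct and matches the paper's approach exactly: the paper states Theorem~\ref{thm:stdgt} as an immediate consequence of Theorem~\ref{thm:norhi} (for ``only if'') together with \cite[Thm.~0.1]{MR2310257} (for ``if''), with no additional argument. Your bookkeeping remarks about the shift $[1]$ are harmless but unnecessary, since the statement only asks whether $\varphi$ lies in the image, not for any uniqueness of the kernel.
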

As a corollary, we obtain a complete description of the image of the cohomology representation of $\Aut\bigl(\Db(X,\alpha)\bigr)$.
\begin{maincor}\label{cor:main}
  The image of $\Aut\bigl(\Db(X,\alpha)\bigr) \to \Oo\bigl(\widetilde{\Hh}(X,B,\Z)\bigr)$ is the subgroup of signed Hodge isometries $\Oo^+\bigl(\widetilde{\Hh}(X,B,\Z)\bigr)$.
\end{maincor}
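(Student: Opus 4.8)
The plan is to obtain Corollary~\ref{cor:main} as the specialization of Theorems~\ref{thm:norhi} and~\ref{thm:stdgt} to the case $(X',\alpha',B') = (X,\alpha,B)$. First I would recall that the cohomological realization is functorial for twisted Fourier--Mukai equivalences: the kernel of a composition $\Phi_Q \circ \Phi_P$ is the convolution of the two kernels, one has $(\Phi_Q \circ \Phi_P)^{\Hh} = \Phi^{\Hh}_Q \circ \Phi^{\Hh}_P$, and the structure sheaf of the diagonal realizes as the identity isometry; see \cite[Ch.~16]{huybrechts2016lectures}. Hence $\Aut\bigl(\Db(X,\alpha)\bigr) \to \Oo\bigl(\widetilde{\Hh}(X,B,\Z)\bigr)$ is a group homomorphism, so its image is a subgroup. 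Since a consistent orientation of the maximal positive-definite subspaces of $\widetilde{\Hh}(X,B,\R)$ is available (Example~\ref{exmp:pos_sign_str}), the signed Hodge isometries likewise form a subgroup $\Oo^+\bigl(\widetilde{\Hh}(X,B,\Z)\bigr) \subseteq \Oo\bigl(\widetilde{\Hh}(X,B,\Z)\bigr)$, and it remains to identify the two.

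For the inclusion of the image into $\Oo^+\bigl(\widetilde{\Hh}(X,B,\Z)\bigr)$, I would use \cite[Thm.~1.1]{MR2329310} to realize an arbitrary $\Phi \in \Aut\bigl(\Db(X,\alpha)\bigr)$ as a twisted Fourier--Mukai equivalence $\Phi_P \colon \Db(X,\alpha) \xrightarrow{\sim} \Db(X,\alpha)$, and then apply Theorem~\ref{thm:norhi} with $(X',\alpha') = (X,\alpha)$ and $B' = B$ to conclude that $\Phi^{\Hh}_P$ is signed. For the reverse inclusion, given a signed Hodge isometry $\varphi \colon \widetilde{\Hh}(X,B,\Z) \xrightarrow{\sim} \widetilde{\Hh}(X,B,\Z)$, Theorem~\ref{thm:stdgt} specialized in the same way produces a twisted Fourier--Mukai equivalence $\Phi_P \colon \Db(X,\alpha) \xrightarrow{\sim} \Db(X,\alpha)$ with $\Phi^{\Hh}_P = \varphi$; being an equivalence of $\Db(X,\alpha)$ with itself, it provides the desired preimage of $\varphi$ in $\Aut\bigl(\Db(X,\alpha)\bigr)$. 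Combining the two inclusions proves the corollary.

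I do not expect a genuine obstacle here: Corollary~\ref{cor:main} is a formal consequence of the two main theorems together with the (standard) fact that the cohomology representation is a group homomorphism. The only point worth checking is that Theorem~\ref{thm:stdgt}, whose statement allows possibly distinct twisted K3 surfaces and $B$-field lifts, may legitimately be applied with the two surfaces and the two lifts equal; this is immediate, as no hypothesis there excludes it.
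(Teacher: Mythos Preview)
Your proposal is correct and matches the paper's approach: the paper presents Corollary~\ref{cor:main} as an immediate consequence of Theorem~\ref{thm:stdgt} (itself the combination of Theorem~\ref{thm:norhi} with \cite[Thm.~0.1]{MR2310257}), specialized to $(X',\alpha',B')=(X,\alpha,B)$, together with the fact that every autoequivalence is of Fourier--Mukai type. Your only redundancy is invoking Theorem~\ref{thm:norhi} separately for one inclusion when the ``if and only if'' of Theorem~\ref{thm:stdgt} already covers both directions.
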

In another direction, we discuss in \S\ref{sect:hodge_thy} the index of the image in Corollary~\ref{cor:main}.
In the untwisted setting, the index of $\Oo^+\bigl(\widetilde{\Hh}(X,\Z)\bigr)$ in $\Oo\bigl(\widetilde{\Hh}(X,\Z)\bigr)$ is always $2$ because $\id_{(\Hh^0 \oplus \Hh^4)(X,\Z)} \oplus -\id_{\Hh^2(X,\Z)}$ is a non-signed Hodge isometry; see Example~\ref{exmp:non-signed_Hodge_isometry}.
The twisted setting is more involved:
even though $\Oo^+\bigl(\widetilde{\Hh}(X,B,\Z)\bigr)$ is still of index at most $2$ in $\Oo\bigl(\widetilde{\Hh}(X,B,\Z)\bigr)$, the isometry $\id_{(\Hh^0 \oplus \Hh^4)(X,\Z)} \oplus -\id_{\Hh^2(X,\Z)}$ does in general not preserve twisted Hodge structures.

In Lemma~\ref{lem:hyp_pln_or_rev}, we formulate a sufficient condition for the existence of a non-signed Hodge isometry.
If $X$ has Picard rank at least $12$, this condition is satisfied and the index is $2$; see Corollary~\ref{cor:non-signed-large-Picard}.
However, Theorem~\ref{thm:signed} below shows that the index is often $1$ for small Picard rank.
The integral cohomology of any K3 surface is isometric to the K3 lattice $\Lambda$.
Thus, after the choice of a marking $\theta \colon \Hh^2(X,\Z) \xrightarrow{\sim} \Lambda$, any $B \in \Lambda_\Q$ induces a $B$-field on $X$.
\begin{restatable}{mainthm}{signed}\label{thm:signed}
  There exist $d \in \Z_{>0}$ and $B \in \Lambda_\Q$ such that every very general marked, primitively $d$-polarized K3 surface $X$ admits only signed $B$-twisted Hodge isometries.
\end{restatable}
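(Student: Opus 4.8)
The plan is to reduce Theorem~\ref{thm:signed} to a property of a single rank-$3$ lattice attached to the pair $(d,B)$, and then to secure that property by an explicit choice followed by a spinor-norm computation. Write $\widetilde\Lambda \colonequals \Hh^0 \oplus \Lambda \oplus \Hh^4$ for the Mukai lattice, fix a primitive $\ell \in \Lambda$ with $\ell^2 = 2d$ (the polarization class), and set $T \colonequals \ell^\perp \subseteq \Lambda$. For $X$ very general in the family one has $\NS(X) = \Z\ell$, so the minimal sub-Hodge structure of $\widetilde{\Hh}(X,B,\Z)$ containing the period $\exp(B)\sigma_X$ has $\Q$-span $\exp(B)(T_\Q)$; hence the twisted transcendental lattice $\widetilde T \colonequals \exp(B)(T_\Q) \cap \widetilde\Lambda$ (of signature $(2,19)$) and its orthogonal complement $\widetilde N \colonequals \widetilde T^\perp = \exp(B)(\Hh^0_\Q \oplus \Q\ell \oplus \Hh^4_\Q) \cap \widetilde\Lambda$ (of rank $3$, signature $(2,1)$) depend only on $d$ and $B$, not on the very general $X$. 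I would first recall the standard countability argument: for each $g \in \Oo(\widetilde T)$ with $g \neq \pm\id$ the fixed locus of $g$ on the period domain of $\widetilde T$ is a proper analytic subset, so for very general $X$ the only Hodge isometries of $\widetilde T$ are $\pm\id$. Now let $\varphi$ be a $B$-twisted Hodge isometry of $\widetilde{\Hh}(X,B,\Z)$. It preserves $\widetilde T$ and $\widetilde N$, and on the positive-definite plane spanned by $\Re$ and $\Im$ of $\exp(B)\sigma_X$ it acts by multiplication by a complex number of modulus $1$, hence orientation-preservingly; so $\varphi$ is signed if and only if $\varphi|_{\widetilde N}$ preserves the orientation of a maximal positive-definite subspace of $\widetilde N \otimes \R$, i.e.\ $\varphi|_{\widetilde N} \in \Oo^+(\widetilde N)$. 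Since $\varphi|_{\widetilde T} = \pm\id$ and $\widetilde\Lambda$ is unimodular, $\varphi|_{\widetilde N}$ acts as $\pm\id$ on $A_{\widetilde N} \cong A_{\widetilde T}$, so $\varphi|_{\widetilde N} \in \{\pm\id\}\cdot\widetilde{\Oo}(\widetilde N)$; as $-\id \in \Oo^+(\widetilde N)$ in signature $(2,1)$, it suffices to choose $(d,B)$ with $\widetilde{\Oo}(\widetilde N) \subseteq \Oo^+(\widetilde N)$.

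\emph{Choice of $(d,B)$.} Next I would exhibit an explicit $\ell$ and an explicit $B \in \Lambda_\Q$ with non-zero component in $\ell^\perp$ — so that the twist is genuinely non-trivial — and compute the Gram matrix of $\widetilde N$. For instance, placing $\ell$ in one hyperbolic summand of $\widetilde\Lambda$ and taking $B = \tfrac{1}{c}\beta$ with $\beta$ isotropic and orthogonal to $\ell$ gives $\widetilde N \cong U(c) \oplus \langle 2d\rangle$, with discriminant group $(\Z/c\Z)^2 \oplus \Z/2d\Z$; once $c \geq 2$ this needs at least two generators, so $\widetilde N$ no longer splits off a hyperbolic plane $U$ and the usual construction of a non-signed Hodge isometry (cf.\ Lemma~\ref{lem:hyp_pln_or_rev}) breaks down. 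The final $(d,B)$ must be tuned so that $\widetilde N$ avoids \emph{every} sublattice pattern forcing an orientation-reversing stable isometry, and I expect that varying $c$ and $d$ — and, if needed, also letting $B$ mix the $\Hh^0$- and $\Hh^4$-summands — will be required to land on a working pair.

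\emph{Verification.} Finally, with $\widetilde N$ explicit, I would check $\widetilde{\Oo}(\widetilde N) \subseteq \Oo^+(\widetilde N)$ using the fact that for any lattice of signature $(s_+,s_-)$ with $s_+,s_- \geq 1$ one has $\Oo^+ = \{\varphi : \det\varphi = \theta_\infty(\varphi)\}$, where $\theta_\infty$ is the real spinor norm valued in $\R^*/(\R^*)^2 = \{\pm1\}$; so it remains to prove $\det\varphi = \theta_\infty(\varphi)$ for every $\varphi$ in the (infinite) group $\widetilde{\Oo}(\widetilde N)$. By Eichler's description of the spinor norm, $\theta_v(\varphi)$ is trivial at every finite place not dividing twice the discriminant of $\widetilde N$, so the class $\theta(\varphi) \in \Q^*/(\Q^*)^2$ is supported on those finitely many primes; combining this with the Miranda--Morrison computation of the local spinor norms of the stable orthogonal group at those primes — very restrictive once the discriminant form of $\widetilde N$ has the shape above — and with the product formula for the Hilbert symbol pins down $\theta_\infty(\varphi)$ in terms of $\det\varphi$ and yields the desired equality. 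Since $\widetilde N$ has rank $3$, this is a finite computation with the Gram matrix.

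\emph{Main obstacle.} The crux is the choice of $(d,B)$: one needs $\widetilde N$ to admit \emph{no} orientation-reversing stable isometry, whereas for $B = 0$ or $B \in \Q\ell$ such an isometry exists (this is exactly why the untwisted and large-Picard-rank cases have index $2$). The twist has to be engineered to destroy it, and it is the spinor-norm bookkeeping of the verification step that makes a candidate pair checkable in practice.
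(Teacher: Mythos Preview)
Your proposal is correct and follows essentially the same route as the paper: reduce to showing $\Oo^\sharp(\widetilde N) \subseteq \Oo^+(\widetilde N)$ for the rank-$3$ lattice $\widetilde N = L^B_d$ (this is the content of Theorem~\ref{thm:spinor_trivial}, proved via the same eigenspace/countability argument you sketch), and then verify this inclusion by local spinor-norm computations \`a la Miranda--Morrison (packaged in the paper as Proposition~\ref{prop:ind_preserv_sign}). The paper completes the step you leave open by noting that whenever a prime $p \equiv 3 \pmod 4$ divides every entry of the Gram matrix of $L^B_d$ the local group $\Sigma^\sharp_p(L^B_d)$ is trivial and the criterion is met; in your example $\widetilde N \simeq U(c) \oplus \langle 2d\rangle$, taking $3 \mid c$ and $3 \mid d$ already suffices.
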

In fact, we can do better:
in Theorem~\ref{thm:spinor_trivial}, we state an explicit, lattice-theoretic condition for when a given $B \in \Lambda_\Q$ satisfies the conclusion of Theorem~\ref{thm:signed}.
For concrete choices of $B$, many of the computations involved in checking this condition can be read off of \cite[Thm.~VII.12.1--4]{miranda2009embeddings}.

\subsection*{Outline of the proof of Theorem~\ref{thm:norhi}}
In \cite[Cor.~3.19]{MR2388559}, Theorem~\ref{thm:norhi} is proved under the additional assumption that $\Db(X,\alpha)$ and $\Db(X',\alpha')$ do not contain any object $E$ with $\Ext^*(E,E) \simeq \Hh^*(S^2,\C)$.
Such objects are called \emph{spherical}.
We reduce Theorem~\ref{thm:norhi} to this special case.
To do so, we find in \S\ref{sect:pf} a deformation of $(X',\alpha')$ whose very general fiber does not have any spherical objects.
After passage to an \'etale neighborhood on the base, we can construct deformations of $(X,\alpha)$ and $P \in \Db(X \times X', \alpha^{-1} \boxtimes \alpha')$ which match the deformation of $(X',\alpha')$; see \S\ref{sect:mod_compl} and \S\ref{subsect:deform_D}.
Finally, we observe that the induced Hodge isometries on the fibers of these deformation families stay locally constant (\S\ref{subsect:signed_deformation}) and use \cite[Cor.~3.19]{MR2388559} to conclude.

To obtain the deformation of $(X',\alpha')$, we use the moduli space $M_d[n]$ of primitively $d$-polarized K3 surfaces with level-$n$ structure.
For every $B' \in \Lambda_\Q$, there exists an $n \gg 0$ such that $B'$ determines a Brauer class on the universal family of $M_d[n]$ over an \'etale neighborhood of $X'$; see \S\ref{subsect:deformation-family}.
With a suitable choice of $B'$, the universal family together with this Brauer class is a deformation of $(X',\alpha')$.
However, for some $B'$ the twisted derived category of every K3 surface in this family contains spherical objects.
For instance, when $B' = 0$, the structure sheaf is a spherical object in the untwisted derived category of every K3 surface.
Thus, we cannot expect to find ourselves in the situation of \cite[Cor.~3.19]{MR2388559} after a single deformation process.

We get around this problem by performing two deformations.
Fix a level structure on $X'$ and $B' \in \Lambda_\Q$ such that the induced Brauer class on $X'$ is $\alpha'$.
First, we construct infinitely many $B_m \in \Lambda_\Q$ and infinitely many distinct reduced, effective divisors $Z_m$ on $M_d[n]$.
On each K3 surface which corresponds to a point of $Z_m$, the Brauer classes induced by $B'$ and $B_m$ are equal. 
Furthermore, the $B_m$ induce Brauer classes on the universal family for which a very general fiber does not have any spherical objects.
As the union $\bigcup Z_m$ is Zariski-dense in $M_d[n]$, it meets every \'etale neighborhood of the initial surface $X'$.
Thus, we can reach it using the deformation of $(X',\alpha')$.
From there, we perform a second deformation with some $B_m$ to arrive at a twisted K3 surface whose twisted derived category does not contain any spherical objects.
For details, see \S\ref{subsect:conclusion}.
\begin{figure}[h]
  \centering
  \begin{tikzpicture}[scale=.75]
  \draw (.3,.5) -- (10.3,.5) -- (13,5) -- (3,5) -- (.3,.5);
  \node [right] at (12.3,3.3) {\large $M_d[n]$};
  \draw [thick] (2.5,1.5) -- (6.5,4.5);
  \node [above left] at (5.75,3.75) {\small $Z_{m_1}$};
  \draw [thick] (6,1) -- (9.5,4.5);
  \node [right] at (7.9,2.75) {\small $Z_{m_3}$};
  \draw [thick] (8,4.5) -- (10.5,2);
  \node [above right] at (9.9,2.4) {\small $Z_{m_4}$};
  \draw [thick] (2.75,3.25) arc [radius=7, start angle=230, end angle=280];
  \node at (4.75,1.75) {\small $Z_{m_2}$};
  \node at (6.75,3.5) {\large $\bullet$};
  \node [right] at (6.75,3.5) {\small $X'$};
  \draw [->, thick, bend left=20, densely dashed] (6.61,3.49) to (5.14,3.365);
  \node at (6,3.5) {\tiny $B'$};
  \node at (5,3.375) {\large $\bullet$};
  \draw [->, thick, bend right=20, densely dotted] (5.1,3.255) to (5.75,2.47);
  \node [left] at (5.6,2.7) {\tiny $B_{m_1}$};
  \node at (5.85,2.35) {\large $\bullet$};
\end{tikzpicture}
\end{figure}

Let us offer a small glimpse of the methods involved in finding the matching deformations of $(X,\alpha)$ and $P$, which generalize the methods of \cite{MR3429474}.
Let $\sTwr_{\mathcal{X}'/\C}$ be the coarse moduli space of simple, $\alpha'$-twisted complexes on $X'$; see \S\ref{subsection:gerbes}.
In \S\ref{subsection:FM_map}, we reinterpret $P$ as an open immersion $\bar{\mu}_P \colon X \hookrightarrow \sTwr_{\mathcal{X}'/\C}$.
Any given deformation of $(X',\alpha')$ over a base scheme $S$ formally produces a deformation of $\sTwr_{\mathcal{X}'/\C}$ over $S$.
As $\bar{\mu}_P$ is an open immersion, it has canonical deformations over each infinitesimal neighborhood in $S$.
After changing the Fourier--Mukai partners in an initial reduction (\S\ref{subsect:init_red}), we can use the algebraicity of a suitable Hilbert stack (\S\ref{subsect:Hilbert-stack}) to deform $\bar{\mu}_P$ over an \'etale neighborhood $U \to S$; see \S\ref{subsect:deform_D}.

\subsection*{Notation and conventions}\label{notation}

Throughout the paper, we work over the ground field $\C$.
We denote the category of perfect complexes on a scheme $X$ by $\Db(X)$;
the same notation is used for twisted schemes, algebraic spaces, algebraic stacks, complex manifolds, and differentiable manifolds.
We denote a Fourier--Mukai transform with kernel $P$ by $\Phi_P$.

We now introduce some lattice-theoretic notation.
The precise definitions of all following objects can be found in, say, \cite[Ch.~14]{huybrechts2016lectures}.
A lattice $L$ is a free $\Z$-module of finite rank together with a non-degenerate symmetric, bilinear form with values in the integers.
We call $L$ even if $(x.x) \in 2\Z$ for all $x \in L$.
The dual lattice of $L$ is $L^\vee \colonequals \Hom(L,\Z)$.
The discriminant group of $L$ is $A_L \colonequals L^\vee/L$, which is equipped with a finite quadratic form $q_L \colon A_L \to \Q / 2\Z$ when $L$ is even.
We denote the orthogonal group of $L$ by $\Oo(L)$ and the kernel of the natural map $\Oo(L) \to \Oo(A_L)$ by $\Oo^\sharp(L)$.

For any $a \in \Z \smallsetminus \{0\}$, we denote the lattice of rank $1$ with intersection matrix $(a)$ by $\langle a \rangle$.
The K3 lattice is $\Lambda \colonequals E_8(-1)^{\oplus 2}\oplus U^{\oplus 3}$, a direct sum of hyperbolic planes $U$ and twists of the $E_8$-lattice.
We denote the canonical basis vectors of the three copies of $U$ by $e_i$ and $f_i$, for $i = 1,2,3$.
The extended K3 lattice $\widetilde{\Lambda} \colonequals \Lambda \oplus U$ contains an additional hyperbolic plane with basis vectors $e$ and $f$.
The period domain associated with $\Lambda$ is $D \colonequals \lbrace x \in \PP(\Lambda_\C) \suchthat (x.x) = 0,\, (x.\bar{x}) > 0 \rbrace$.

For any $d \in \Z_{>0}$, we obtain $d$-polarized versions of the above notions by intersecting with the orthogonal complement of the primitive integral class $\ell \colonequals e_1 + d\cdot f_1 \in \Lambda$.
Concretely, $\Lambda_d \colonequals \Lambda \cap \ell^\perp$, $\widetilde{\Lambda}_d \colonequals \widetilde{\Lambda} \cap \ell^\perp$, and $D_d \colonequals D \cap \PP(\ell^\perp)$ are the $d$-polarized K3 lattice, the $d$-polarized extended K3 lattice, and the $d$-polarized period domain, respectively.

We define the following ring structure on $\widetilde{\Lambda}$ to mimic the cup product on singular cohomology:
\[ (\lambda e + x + \mu f) \cdot (\lambda' e + x' + \mu' f) \colonequals \lambda \lambda' e + (\lambda x' + \lambda' x) + (\lambda \mu' - (x.x') + \lambda' \mu)f. \]
The ``exponential'' of a given $B \in \Lambda_\C$ is $\exp(B) \colonequals e + B - \frac{(B.B)}{2}f \in \widetilde{\Lambda}_\C$.
It acts on $\widetilde{\Lambda}_\C$ via multiplication.
For a subset $S \subseteq \widetilde{\Lambda}_\C$, let $S^B \colonequals (\exp B) (S)$.
This applies in particular to $\Lambda^B$, $\Lambda^B_d$, and $\Lambda^B_{d\C}$.
As $\exp B$ descends to an automorphism $\PP(\exp B)$ of $\PP(\widetilde{\Lambda}_\C)$, an analogous definition can be made for subsets $S \subseteq \PP(\widetilde{\Lambda}_\C)$.
For instance, $D^B_d = \PP(\exp B)(D_d)$.

\section{Hodge theory of twisted K3 surfaces}\label{sect:hodge_thy}

This section describes the Hodge structures and Hodge isometries that can arise on twisted K3 surfaces.
Since it is, by and large, independent from the rest of the paper, the reader who is mainly interested in the proof of Theorem~\ref{thm:norhi} may jump directly to \S\ref{sect:mod_compl} and refer back as needed.
We begin by reviewing the necessary lattice theory.

\subsection{Lattice-theoretic preliminaries}\label{subsect:lattice_theory}

Let $L$ be a lattice with intersection form $(\;.\;)$.
The real vector space $L_\R \colonequals L \otimes_\Z \R$ comes with an inner product whose signature we denote by $(s_+,s_-)$.
When $s_+ > 0$, the \emph{positively oriented Grassmannian}
\[ \Gr^+_{s_+}(L_\R) \colonequals \lbrace W \subset L_\R \text{ positive-definite, oriented subspace of dimension } s_+ \rbrace \]
has two connected components.
\begin{defn}
  A \emph{positive sign structure} on $L$ is a choice of a connected component of $\Gr^+_{s_+}(L_\R)$.
\end{defn}
In other words, given two positive-definite subspaces $W$, $W'$ of $L_\R$ of maximal dimension $s_+$, the orthogonal projection $W \to W'$ is an isomorphism.
A positive sign structure is a choice of orientation on each such subspace $W$ that is compatible with these isomorphisms.
\begin{exmp}\label{exmp:pos_sign_str}
  Let $(X,\alpha)$ be a twisted K3 surface.
  Let $B \in \Hh^2(X,\Q)$ be a $B$-field lift of $\alpha$.
  Choose a holomorphic two-form $\sigma \in \Hh^{2,0}(X)$ and a K\"ahler class $\omega \in \Hh^{1,1}(X)$.
  The \emph{natural positive sign structure} on $\widetilde{\Hh}(X,B,\Z)$ is the connected component of $\Gr^+_4\bigl(\widetilde{\Hh}(X,B,\R)\bigr)$ that contains the positive-definite, oriented subspace with ordered basis
  \[ \left(\Re\bigl(\exp(B)\sigma\bigr), \Im\bigl(\exp(B)\sigma\bigr), \Re\bigl(\exp(B + i\omega)\bigr), \Im\bigl(\exp(B + i\omega)\bigr)\right). \]
  Here, $\exp(\tau) \colonequals 1 + \tau + \frac{\tau \wedge \tau}{2}$ for any $\tau \in \widetilde{\Hh}(X,B,\C)$ and $\exp(B)$ acts on $\widetilde{\Hh}(X,B,\C)$ via the cup product.
  The natural positive sign structure is independent of the choices of $\sigma$ and $\omega$.
\end{exmp}
\begin{defn}\label{defn:or_pres}
  An isometry $\sigma \colon L \xrightarrow{\sim} M$ of lattices with chosen positive sign structures is \emph{signed} if $\sigma$ carries the positive sign structure of $L$ into the positive sign structure of $M$.
  It is \emph{non-signed} otherwise.

  Likewise, an isometry $\sigma \colon L \xrightarrow{\sim} L$ of a lattice $L$ is signed if and only if $\sigma_\R$ preserves the two connected components of $\Gr^+_{s_+}(L_\R)$.
  Set
  \[ \Oo^+(L) \colonequals \lbrace \sigma \in \Oo(L) \suchthat \sigma_\R \text{ is signed} \rbrace. \]
\end{defn}
In the literature, positive sign structures are also called \emph{orientations of the positive directions} and signed isometries are those that \emph{preserve the orientation of the positive directions}. 
The next example is crucial for \S\ref{subsect:init_red}.
\begin{exmp}\label{exmp:non-signed_Hodge_isometry}
  Let $X$ be a K3 surface with holomorphic form $\sigma \in \Hh^{2,0}(X)$, K\"ahler class $\omega \in \Hh^{1,1}(X)$, and $B$-field $B \in \Hh^2(X,\Q)$.
  Then the isometry
  \[ \id_{\Hh^0\oplus \Hh^4} \oplus -\id_{\Hh^2} \colon \widetilde{\Hh}(X,B,\Z) \xrightarrow{\sim} \widetilde{\Hh}(X,-B,\Z) \]
  relates the basis vectors of the natural positive-definite, oriented subspaces from Example~\ref{exmp:pos_sign_str} as follows:
  \[ \begin{array}{ll}
    \Re\bigl(\exp(B)\sigma\bigr) \mapsto -\Re\bigl(\exp(-B)\sigma\bigr), & \Im\bigl(\exp(B)\sigma\bigr) \mapsto -\Im\bigl(\exp(-B)\sigma\bigr), \\
    \Re\bigl(\exp(B + i\omega)\bigr) \mapsto \Re\bigl(\exp(-B + i\omega)\bigr), & \Im\bigl(\exp(B + i\omega)\bigr) \mapsto -\Im\bigl(\exp(-B + i\omega)\bigr).
  \end{array} \]
  Thus, $\id_{\Hh^0\oplus \Hh^4} \oplus -\id_{\Hh^2}$ is non-signed.
\end{exmp}
We now give an alternative description of $\Oo^+(L)$.
\begin{defn}
  Let $K$ be a field of characteristic $\neq 2$, $V$ a vector space over $K$, and $(\;.\;)$ a symmetric, bilinear form on $V$.
  For $v \in V$ with $(v.v) \neq 0$, let $\tau_v \colon w\mapsto w - 2\frac{(w.v)}{(v.v)}v$ be the associated reflection.
  Set
  \[ \spin(\tau_v) = (v.v) \in K^\times / (K^\times)^2. \]
  By the Cartan--Dieudonn\'{e} theorem, every isometry is a composition of reflections.
  We can extend $\spin$ uniquely to a homomorphism
  \[ \spin \colon \Oo(V) \to K^\times / (K^\times)^2, \]
  which we call the \emph{spinor norm};
  it is well-defined by \cite[\S 55]{MR1754311}.
\end{defn}
In case $K = \R$, we identify $\R^\times / (\R^\times)^2 \simeq \{ \pm 1 \}$.
Then for any lattice $L$,
\[ \Oo^+(L) = \{ \sigma \in \Oo(L) \suchthat \det(\sigma) \cdot \spin(\sigma_\R) = 1 \}. \]
Denote by $\Oo^\sharp(L) \subseteq \Oo(L)$ the group of isometries that act trivially on the discriminant group $A_L = L^\vee/L$.
We later need to check when $\Oo^\sharp(L) \subseteq \Oo^+(L)$ (cf.\ Theorem~\ref{thm:spinor_trivial}).
Proposition~\ref{prop:ind_preserv_sign} below reduces this to a purely local computation.
Before we can state the precise criterion, we need to introduce additional notation.

For all primes $p \in \N$, let
\[ \Gamma_{p,0} \colonequals \{ \pm 1 \} \times \Z^\times_p /(\Z^\times_p)^2 \subset \Gamma_p \colonequals \{ \pm 1 \} \times \Q^\times_p /(\Q^\times_p)^2. \]
Let furthermore
\[ \Gamma_{\A,0} \colonequals \prod_{p\text{ prime}} \Gamma_{p,0} \subset \Gamma_\A \colonequals \biggl\{ (d_p,s_p) \in \prod_{p\text{ prime}} \Gamma_p \;\bigg|\; (d_p,s_p) \in \Gamma_{p,0} \text{ for almost all }p \biggr\}. \]
The inclusions $\Oo(L) \subset \Oo(L_{\Q_p})$ induce maps
\[ (\det,\spin_p) \colon \Oo(L) \to \Gamma_p, \]
which combine to
\[ (\det,\spin_\A) \colon \Oo(L) \to \Gamma_\A. \]
For any prime $p$, set
\[ \Sigma^\sharp_p(L) \colonequals (\det,\spin_p)\bigl(\Oo^\sharp(L_{\Z_p})\bigr) \quad \text{and} \quad \Sigma^\sharp(L) \colonequals \prod_{p\text{ prime}} \Sigma^\sharp_p(L). \]
By \cite[Thm.~VII.12.1--4]{miranda2009embeddings}, $\Sigma^\sharp_p(L) \subseteq \Gamma_{p,0}$ and $\Sigma^\sharp(L) \subseteq \Gamma_{\A,0}$.
Put
\[ \Gamma_\Q \colonequals \{ \pm 1 \} \times \Q^\times / (\Q^\times)^2 \subset \Gamma_\A. \]
Let $\Gamma^+_\Q \subset \Gamma_\Q$ be the preimage of $\{ (-1,-1),(1,1) \}$ under the natural homomorphism
\[ \Gamma_\Q = \{ \pm 1 \} \times \Q^\times / (\Q^\times)^2 \to \{ \pm 1 \} \times \R^\times / (\R^\times)^2 \simeq \{ \pm 1 \} \times \{ \pm 1 \}. \]
Lastly, set
\[ \Gamma_0 \colonequals \Gamma_\Q \cap \Gamma_{\A,0} = \{ \pm 1 \} \times \lbrace \pm 1 \rbrace \]
and define
\[ \Gamma^+_0 \colonequals \Gamma^+_\Q \cap \Gamma_0 = \{ (-1,-1),(1,1) \rbrace \subset \Gamma_0 \quad \text{and} \quad \Sigma^\sharp_0(L) \colonequals \Sigma^\sharp(L) \cap \Gamma_0. \]
The next proposition is known to some experts.
\begin{prop}\label{prop:ind_preserv_sign}
  Let $L$ be a non-degenerate, indefinite, even lattice of rank at least 3.
  Then $\Oo^\sharp(L) \subseteq \Oo^+(L)$ if and only if $\Sigma^\sharp_0(L) \subseteq \Gamma^+_0$.
\end{prop}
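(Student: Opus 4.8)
The plan is to reduce the whole statement to a single surjectivity fact about the spinor norm and then to prove that fact by strong approximation. First I would rewrite both sides of the equivalence in terms of the map $(\det,\spin_\Q)\colon\Oo(L)\to\Gamma_\Q$. If $\sigma\in\Oo^\sharp(L)$, then $\sigma$ restricts to an element of $\Oo^\sharp(L_{\Z_p})$ for every prime $p$, so $(\det,\spin_p)(\sigma)\in\Sigma^\sharp_p(L)\subseteq\Gamma_{p,0}$ by \cite[Thm.~VII.12.1--4]{miranda2009embeddings}; in particular $\spin_p(\sigma)$ is a $p$-adic unit square class for every $p$, which forces the rational square class $\spin(\sigma)$ to be $\pm1$. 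Hence $(\det,\spin_\Q)(\sigma)\in\Gamma_0\cap\Sigma^\sharp(L)=\Sigma^\sharp_0(L)$, and under the identification $\R^\times/(\R^\times)^2\simeq\{\pm1\}$ we have $\spin(\sigma_\R)=\spin(\sigma)$. Consequently $\det(\sigma)\cdot\spin(\sigma_\R)=1$ if and only if $(\det(\sigma),\spin(\sigma))\in\{(1,1),(-1,-1)\}=\Gamma^+_0$. Summarizing: $\Oo^\sharp(L)\subseteq\Oo^+(L)$ holds if and only if $(\det,\spin_\Q)\bigl(\Oo^\sharp(L)\bigr)\subseteq\Gamma^+_0$, while the image $(\det,\spin_\Q)\bigl(\Oo^\sharp(L)\bigr)$ is always contained in $\Sigma^\sharp_0(L)$.

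Granting this, the ``if'' direction is immediate: if $\Sigma^\sharp_0(L)\subseteq\Gamma^+_0$, then in particular $(\det,\spin_\Q)\bigl(\Oo^\sharp(L)\bigr)\subseteq\Gamma^+_0$, so $\Oo^\sharp(L)\subseteq\Oo^+(L)$. For the ``only if'' direction it suffices to show that $(\det,\spin_\Q)\colon\Oo^\sharp(L)\to\Sigma^\sharp_0(L)$ is \emph{surjective}: then $\Sigma^\sharp_0(L)$ equals the image $(\det,\spin_\Q)\bigl(\Oo^\sharp(L)\bigr)$, which is contained in $\Gamma^+_0$ precisely when $\Oo^\sharp(L)\subseteq\Oo^+(L)$.

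To prove the surjectivity I would invoke strong approximation. Since $L$ is indefinite of rank at least $3$, the quadratic space $L_\Q$ has dimension at least $3$ and is isotropic over $\R$, so $\mathrm{Spin}(L_\Q)$ is semisimple, simply connected, and non-compact at the archimedean place; hence it satisfies strong approximation away from $\infty$ (cf.\ \cite{MR1754311}). Fix $(d,s)\in\Sigma^\sharp_0(L)$; by definition of $\Sigma^\sharp(L)$ one may choose, for each prime $p$, an element $\tau_p\in\Oo^\sharp(L_{\Z_p})$ with $(\det,\spin_p)(\tau_p)$ equal to the image of $(d,s)$. The subgroup $\widehat{\Oo^\sharp}(L)\colonequals\prod_p\Oo^\sharp(L_{\Z_p})$ is open and compact in the finite-adelic orthogonal group and satisfies $\Oo(L_\Q)\cap\widehat{\Oo^\sharp}(L)=\Oo^\sharp(L)$, because acting trivially on $A_L$ is the same as acting trivially on each $p$-primary summand. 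When $d=1$ I would pick a global $g\in\SO(L_\Q)$ with $\spin(g)=s$ --- such a $g$ exists because $s$ is a local spinor norm at every place, using indefiniteness at $\infty$ (for $\dim L\geq5$ this already follows from isotropy of $L_\Q$ over $\Q$). Then $g^{-1}(\tau_p)_p$ has trivial spinor norm at every finite place and hence lifts to a finite-adelic point of $\mathrm{Spin}$; strong approximation produces a rational point $\tilde k$ of $\mathrm{Spin}$ in a small enough neighborhood of that lift that $\sigma\colonequals g\cdot\pi(\tilde k)$ lands in $\Oo(L_\Q)\cap\widehat{\Oo^\sharp}(L)=\Oo^\sharp(L)$, and since $\pi(\tilde k)$ has spinor norm $1$ we get $(\det,\spin_\Q)(\sigma)=(1,s)$. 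The case $d=-1$ I would reduce to the case $d=1$ after first producing, by the same argument applied to the non-identity component of $\Oo$, one element $\rho\in\Oo^\sharp(L)$ with $\det\rho=-1$ (available since such $\rho$ exists over each $\Z_p$ when $d=-1$). Alternatively, this surjectivity can be extracted from the analysis of $\Oo(L)$ for indefinite lattices in \cite[Ch.~VIII]{miranda2009embeddings}.

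I expect this surjectivity to be the only genuine obstacle: one must transport the purely local isometries $(\tau_p)_p$ to a single rational isometry while simultaneously prescribing its determinant, its action on the discriminant form, and its spinor norm at every finite \emph{and} infinite place. The delicate points are the passage between $\mathrm{Spin}$, $\SO$, and $\Oo$ (the determinant $-1$ coset), the behavior at $p=2$, where $\Z_2^\times/(\Z_2^\times)^2$ is not cyclic, and --- in the edge cases $\dim L\in\{3,4\}$, where $L_\Q$ can be anisotropic --- the appeal to the Hasse principle for the spinor norm. The hypothesis that $L$ is indefinite enters precisely here: it makes the archimedean component of $\mathrm{Spin}$ non-compact, so that strong approximation applies and the archimedean sign of the globalized isometry can be prescribed.
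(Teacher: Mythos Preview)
Your proposal is correct and follows essentially the same route as the paper: the sufficiency direction is identical, and for necessity both arguments prove the surjectivity of $(\det,\spin_\Q)\colon\Oo^\sharp(L)\to\Sigma^\sharp_0(L)$ via strong approximation for spin groups together with the surjectivity of $(\det,\spin)\colon\Oo(L_\Q)\to\Gamma_\Q$ for indefinite $L$ of rank $\geq 3$. The paper's organization is a bit cleaner in that it invokes Kneser's result \cite[Satz~A]{MR0080101} directly to pick one rational isometry $\rho$ with prescribed $(\det,\spin_\A)(\rho)=((d_p,s_p))_p$ and then corrects it by a single strong-approximation step, thereby avoiding your case split on $d=\pm 1$ and the somewhat circuitous production of an auxiliary $\rho\in\Oo^\sharp(L)$ with $\det\rho=-1$.
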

\begin{proof}
  To show necessity, assume $\Oo^\sharp(L) \subseteq \Oo^+(L)$.
  Let $\bigl((d_p,s_p)\bigr)_p \in \Sigma^\sharp_0(L)$.
  By definition of $\Sigma^\sharp(L)$, there are $\sigma_p \in \Oo^\sharp(L_{\Z_p})$ such that $(\det,\spin_p)(\sigma_p) = (d_p,s_p)$.
  They induce in particular an isometry $A_{(\sigma_p)}$ of the discriminant form $A_L$.

  On the other hand, as $L$ is indefinite of rank at least 3, $(\det,\spin) \colon \Oo(L_\Q) \to \Gamma_\Q$ is surjective; cf.\ \cite[Satz~A]{MR0080101}.
  Thus, there is $\rho \in \Oo(L_\Q)$ with $(\det,\spin_\A)(\rho) = \bigl((d_p,s_p)\bigr)_p$.
  By the strong approximation theorem for spin groups \cite[Approximationssatz]{MR0082514}, we can find $\psi \in \Oo(L_\Q)$ such that $(\det,\spin_\A)(\psi) = (1,1)$,
  \[ \psi_{\Q_p}(L_{\Z_p}) = \rho^{-1}_{\Q_p} \circ \sigma_p (L_{\Z_p}) = \rho^{-1}_{\Q_p} (L_{\Z_p}) \]
  for all $p$, and the induced morphism on discriminant groups is $A_{(\psi_p)} = A_{(\rho^{-1}_{\Q_p} \circ \sigma_p)}$.

  Set $\varphi \colonequals \rho \circ \psi$.
  Then $\varphi (L_{\Z_p}) = L_{\Z_p}$ for all $p$, and so $\varphi \in \Oo(L)$.
  Moreover, $A_\varphi = A_{(\sigma_p)} = \id_{A_L}$ because $\sigma_p \in \Oo^\sharp(L_{\Z_p})$.
  Therefore, $\varphi \in \Oo^\sharp(L)$ with $(\det,\spin_\A)(\varphi) = (\det,\spin_\A)(\rho) \cdot (\det,\spin_\A)(\psi) = \bigl((d_p,s_p)\bigr)_p$.
  However, since $\Oo^\sharp(L) \subseteq \Oo^+(L)$, it follows that $(\det,\spin_\A)(\varphi) \notin \lbrace (-1,1),(1,-1) \rbrace$.
  Thus, $\bigl((d_p,s_p)\bigr)_p \in \Gamma^+_0$.

  For sufficiency, assume $\Sigma^\sharp_0(L) \subseteq \Gamma^+_0$.
  Let $\varphi \in \Oo^\sharp(L)$.
  Then $(\det,\spin_\A)(\varphi) \in \Sigma^\sharp(L) \cap \Gamma_\Q$.
  As $\Sigma^\sharp(L) \subseteq \Gamma_{\A,0}$, we have
  \[ \Sigma^\sharp(L) \cap \Gamma_\Q = \Sigma^\sharp(L) \cap \Gamma_{\A,0} \cap \Gamma_\Q = \Sigma^\sharp(L) \cap \Gamma_0 = \Sigma^\sharp_0(L) \subseteq \Gamma^+_0. \]
  Hence, $(\det,\spin)(\varphi_\R) \in \lbrace (-1,-1),(1,1) \rbrace$ and $\varphi \in \Oo^+(L)$.
\end{proof}

\subsection{Large Picard rank}
Twisted Hodge structures of K3 surfaces with Picard rank at least $12$ always admit a non-signed Hodge isometry.
\begin{lem}\label{lem:hyp_pln_or_rev}
Let $X$ be a K3 surface and $B \in \Hh^2(X,\Q)$.
Assume there exists a hyperbolic plane $U \subset \widetilde{\Hh}^{1,1}(X,B,\Z)$.
Then $\widetilde{\Hh}(X,B,\Z)$ admits a non-signed Hodge isometry.
\end{lem}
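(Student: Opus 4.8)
The plan is to exhibit an explicit non-signed Hodge isometry that is ``supported'' on the given hyperbolic plane. Since $U$ is a unimodular sublattice of the weight-two Hodge structure $\widetilde{\Hh}(X,B,\Z)$, it is primitive and splits off orthogonally, so $\widetilde{\Hh}(X,B,\Z) = U \oplus U^\perp$. I would then consider the order-two isometry
\[ g \colonequals (-\id_U) \oplus \id_{U^\perp} \colon \widetilde{\Hh}(X,B,\Z) \xrightarrow{\sim} \widetilde{\Hh}(X,B,\Z) \]
and claim that it is a non-signed Hodge isometry, which proves the lemma.

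First I would check that $g$ is a Hodge isometry. By hypothesis $U \subseteq \widetilde{\Hh}^{1,1}(X,B,\Z)$, hence $U_\C \subseteq \widetilde{\Hh}^{1,1}(X,B,\C)$. Because $\widetilde{\Hh}^{2,0}(X,B)$ and $\widetilde{\Hh}^{0,2}(X,B)$ are orthogonal to $\widetilde{\Hh}^{1,1}(X,B)$ for the $\C$-bilinear extension of the Mukai pairing, they are contained in $(U^\perp)_\C = U_\C^\perp$. Thus $g_\C$ preserves the orthogonal decomposition $\widetilde{\Hh}(X,B,\C) = U_\C \oplus (U^\perp)_\C$, in which $U_\C$ sits inside $\widetilde{\Hh}^{1,1}$ and $(U^\perp)_\C$ contains $\widetilde{\Hh}^{2,0} \oplus \widetilde{\Hh}^{0,2}$, and $g_\C$ is the identity on $(U^\perp)_\C$; in particular $g_\C$ respects the Hodge decomposition.

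Next I would check that $g$ is non-signed. The lattice $U$ has signature $(1,1)$ and $U^\perp$ has signature $(3,19)$, so I can choose a positive-definite vector $u \in U_\R$ and a positive-definite subspace $W' \subset (U^\perp)_\R$ of dimension $3$. Then $W \colonequals \R u \oplus W'$ is a maximal positive-definite subspace of $\widetilde{\Hh}(X,B,\R)$, and $g|_W = (-\id_{\R u}) \oplus \id_{W'}$ has determinant $-1$; hence $g_\R$ reverses the orientation of $W$ and therefore interchanges the two connected components of $\Gr^+_4\bigl(\widetilde{\Hh}(X,B,\R)\bigr)$, i.e.\ $g \notin \Oo^+\bigl(\widetilde{\Hh}(X,B,\Z)\bigr)$. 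Equivalently, one may use the formula for $\Oo^+$ from \S\ref{subsect:lattice_theory}: writing $e,f$ for a standard basis of $U$ one has $-\id_U = \tau_{e-f} \circ \tau_{e+f}$ with $(e+f . e+f) = 2$ and $(e-f . e-f) = -2$, so $\det(g)\cdot\spin(g_\R) = 1 \cdot (2 \cdot (-2)) = -1$ in $\R^\times/(\R^\times)^2$.

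I do not expect a genuine obstacle here. The only point that needs care is the observation that the assumption $U \subseteq \widetilde{\Hh}^{1,1}$ forces the transcendental part $\widetilde{\Hh}^{2,0} \oplus \widetilde{\Hh}^{0,2}$ into $U^\perp$, which is precisely what upgrades the tautological involution $g$ from an isometry to a \emph{Hodge} isometry; the failure of $g$ to be signed is then automatic from the indefinite (hyperbolic) signature of $U$.
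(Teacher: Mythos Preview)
Your proof is correct and produces the same isometry as the paper's, namely $-\id_U \oplus \id_{U^\perp}$. The paper takes a slightly more roundabout route: it first constructs this isometry only on $\widetilde{\Hh}^{1,1}(X,B,\Z)$, observes that it acts trivially on the discriminant group $A_{\widetilde{\Hh}^{1,1}(X,B,\Z)}$, and then invokes Nikulin's extension result \cite[Cor.~1.5.2]{MR525944} to obtain a Hodge isometry of the full lattice. Your argument is more elementary: you use directly that the unimodular sublattice $U$ splits off orthogonally from all of $\widetilde{\Hh}(X,B,\Z)$, and then check by hand that the resulting involution respects the Hodge decomposition because $\widetilde{\Hh}^{2,0}\oplus\widetilde{\Hh}^{0,2}$ is orthogonal to $U_\C$. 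This avoids the appeal to Nikulin altogether, at the cost of not illustrating the general mechanism (extending isometries of the algebraic lattice via the discriminant group) that the paper uses again elsewhere.
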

\begin{proof}
  As $\widetilde{\Hh}^{1,1}(X,B,\Z) = U \oplus U^\perp$, we find the non-signed isometry $\id_{U^\perp} \oplus -\id_U$ of $\widetilde{\Hh}^{1,1}(X,B,\Z)$.
  Since $\id_{U^\perp} \oplus -\id_U$ acts trivially on $A_{\widetilde{\Hh}^{1,1}(X,B,\Z)}$, it can be extended to a non-signed Hodge isometry of $\widetilde{\Hh}(X,B,\Z)$; see \cite[Cor.~1.5.2]{MR525944}.
\end{proof}
\begin{cor}\label{cor:non-signed-large-Picard}
Let $X$ be a K3 surface of Picard rank $\rho (X) \geq 12$.
Let $B \in \Hh^2(X,\Q)$.
Then $\widetilde{\Hh}(X,B,\Z)$ admits a non-signed Hodge isometry.
\end{cor}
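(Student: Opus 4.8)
The goal is to deduce Corollary~\ref{cor:non-signed-large-Picard} from Lemma~\ref{lem:hyp_pln_or_rev}, so the entire task is to verify the hypothesis of that lemma: namely, that $\widetilde{\Hh}^{1,1}(X,B,\Z)$ contains a hyperbolic plane $U$ whenever $\rho(X) \geq 12$. The plan is a pure lattice-embedding argument.

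First I would identify the lattice $\widetilde{\Hh}^{1,1}(X,B,\Z)$. Since $\exp(B)$ acts as an isometry on $\widetilde{\Hh}(X,\C)$, the $(1,1)$-part $\widetilde{\Hh}^{1,1}(X,B,\Z)$ is the orthogonal complement of $\exp(B)\sigma$ (and its conjugate) intersected with the integral lattice; concretely it is a primitive sublattice of the Mukai lattice $\widetilde{\Hh}(X,\Z) \cong \Lambda \oplus U \cong E_8(-1)^{\oplus 2} \oplus U^{\oplus 4}$, of signature $(2, \rho(X))$ — it contains the hyperbolic plane $\Hh^0 \oplus \Hh^4$ (after twisting, still a rank-two lattice of signature $(1,1)$) together with a copy of $\NS(X)$, so its rank is $\rho(X) + 2 \geq 14$ and its signature is $(2,\rho(X))$. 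In particular it is an even lattice of signature $(2, \rho(X))$ with $\rho(X) \geq 12$, and it is primitively embedded in the unimodular lattice $\widetilde{\Lambda} = E_8(-1)^{\oplus 2} \oplus U^{\oplus 4}$ of signature $(4,20)$.

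The key step is then the following: an even lattice $N$ of signature $(2, t)$ with $t \geq 12$ that embeds primitively into $\widetilde{\Lambda}$ must contain a hyperbolic plane $U$ as an orthogonal direct summand. I would argue this via Nikulin's existence and uniqueness theory of primitive embeddings \cite[\S 1]{MR525944}: writing the orthogonal complement of $N$ in $\widetilde{\Lambda}$ as $N^\perp$, which is even of signature $(2, 20 - t)$ and of rank $22 - t \leq 10$, the discriminant groups satisfy $A_N \cong A_{N^\perp}$ and $q_N \cong -q_{N^\perp}$, so $\ell(A_N) = \ell(A_{N^\perp}) \leq \rk N^\perp = 22 - t$. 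Hence $\rk N - \ell(A_N) \geq (t+2) - (22 - t) = 2t - 20 \geq 4 > 2$, and $N$ is indefinite; by Nikulin's criterion (see \cite[Cor.~1.13.3 and Thm.~1.13.2]{MR525944}, or the standard statement that an indefinite even lattice with $\rk \geq \ell(A) + 2$ contains $U$ as a direct summand — cf.\ \cite[Thm.~14.1.12]{huybrechts2016lectures}) the lattice $N$ splits off a hyperbolic plane: $N \cong U \oplus N'$. This hyperbolic plane sits inside $\widetilde{\Hh}^{1,1}(X,B,\Z)$, so Lemma~\ref{lem:hyp_pln_or_rev} applies and produces the desired non-signed Hodge isometry.

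The main obstacle is getting the numerical bound on the length $\ell(A_N)$ to go in the right direction — this is exactly why the Picard rank threshold $12$ appears. One must be careful that $\widetilde{\Hh}^{1,1}(X,B,\Z)$ really is \emph{primitively} embedded in $\widetilde{\Hh}(X,\Z)$ (it is, being the integral $(1,1)$-part, i.e.\ the saturation of $\NS(X) \oplus (\Hh^0 \oplus \Hh^4)$ twisted by $\exp B$) and that $\widetilde{\Hh}(X,\Z)$ with the Mukai pairing is indeed the even unimodular lattice $\widetilde{\Lambda}$ of signature $(4,20)$; both are standard. A slightly cleaner alternative, avoiding the orthogonal complement, is to invoke directly the theorem (\cite[Cor.~1.13.5]{MR525944}) that an even lattice of signature $(s_+, s_-)$ with $s_+ \geq 1$, $s_- \geq 1$ and $s_+ + s_- \geq 3 + \ell(A)$ contains $U$; here $s_+ + s_- = \rho(X)+2$ while $\ell(A) \leq \rho(X) - 10$ forces the inequality once $\rho(X) \geq 12$ — but this again requires the same length estimate, which is the crux. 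Either way, once the hyperbolic plane is located the corollary is immediate from the lemma.
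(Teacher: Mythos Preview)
Your argument is correct and uses the same underlying Nikulin splitting criterion, but the paper applies it to a smaller lattice and thereby avoids any analysis of the twisted $(1,1)$-part. The paper simply notes that $\NS(X) \subset \widetilde{\Hh}^{1,1}(X,B,\Z)$ --- any integral $(1,1)$-class on $X$ is automatically orthogonal to $\exp(B)\sigma$ --- and cites the proof of \cite[Lem.~4.1]{MR1314742}, which is exactly your computation carried out for $\NS(X) \subset \Hh^2(X,\Z)$: one has $\ell(A_{\NS(X)}) \leq \rk T(X) = 22-\rho$, hence $\rk\NS(X) - \ell(A_{\NS(X)}) \geq 2\rho - 22 \geq 2$, and $\NS(X)$ itself already splits off $U$. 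Your version, applied instead to the larger rank-$(\rho+2)$ lattice, would in fact yield the conclusion already for $\rho \geq 11$.

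One small slip in your write-up: $\widetilde{\Hh}^{1,1}(X,B,\Z)$ need not contain $\Hh^0 \oplus \Hh^4$, since the generator of $\Hh^0$ pairs with $\exp(B)\sigma$ to $-(B.\sigma)$, which is nonzero unless $B \in \NS(X)_\Q$. The rank is nonetheless $\rho(X)+2$ --- the generator of $\Hh^4$, all of $\NS(X)$, and the integral class $(a,aB,0)$ with $a = \per(B)$ are independent elements of the twisted $(1,1)$-part --- so the rest of your argument is unaffected.
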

\begin{proof}
If $\rho(X) \geq 12$, then $\NS(X) \subset \widetilde{\Hh}^{1,1}(X,B,\Z)$ contains a hyperbolic plane by the proof of \cite[Lem.~4.1]{MR1314742}.
\end{proof}
We will see now, however, that the converse of Lemma~\ref{lem:hyp_pln_or_rev} does not hold.
\begin{prop}\label{prop:latt_gen_pic_gr}
Let $L$ be an even, non-degenerate lattice of rank $3 \leq \rk L \leq 12$ and signature $(2,\rk L - 2)$.
Assume there exists $x \in L$ with $(x.x) = 0$.
Then we can find a K3 surface $X$ and $B \in \Hh^2(X,\Q)$ such that $\widetilde{\Hh}^{1,1}(X,B,\Z) \simeq L$.
\end{prop}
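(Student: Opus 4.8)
The plan is to realize $L$ as the lattice $\widetilde{\Hh}^{1,1}(X,B,\Z)$ of $(1,1)$-classes of a suitable $B$-twisted period. The guiding observation is that for any $B$-field one has $\exp(B)f=f$, where $f$ denotes the class of a point; hence $f$ is of type $(1,1)$, and $\widetilde{\Hh}^{1,1}(X,B,\Z)$ always contains a primitive isotropic vector. This is precisely the role of the hypothesis $(x.x)=0$, which — after dividing $x$ by its content — I may assume holds for a primitive $x$.

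First I would produce a primitive embedding $L\hookrightarrow\widetilde{\Lambda}$ and put it into a normal form. Since $L$ is even, non-degenerate, of signature $(2,\rk L-2)$, and $\rk L\le 12$, we have $\ell(A_L)\le\rk L\le 24-\rk L=\rk\widetilde{\Lambda}-\rk L$, so Nikulin's criterion \cite[Cor.~1.12.3]{MR525944} provides such an embedding — the bound $\rk L\le 12$ being exactly what makes the length inequality hold. The image of $x$ is a primitive vector in the even unimodular lattice $\widetilde{\Lambda}$, hence has divisibility $1$; as $\widetilde{\Lambda}$ contains two orthogonal hyperbolic planes, $\Oo(\widetilde{\Lambda})$ acts transitively on its primitive isotropic vectors by Eichler's criterion (see, e.g., \cite[Ch.~14]{huybrechts2016lectures}). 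After composing with an isometry of $\widetilde{\Lambda}$, I may therefore assume that $L\subseteq\widetilde{\Lambda}$ is a primitive sublattice with $x=f$, where $\widetilde{\Lambda}=\Lambda\oplus U$ and $U=\langle e,f\rangle$.

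Next I would read off a K3 surface and a $B$-field from $M\colonequals L^\perp\subseteq\widetilde{\Lambda}$, an even lattice of signature $(2,22-\rk L)$. Since $f\in L$, we have $M\subseteq f^\perp=\Lambda\oplus\Z f$, and, $f$ being isotropic and orthogonal to $M$, the projection $\pi\colon f^\perp\to\Lambda$ restricts to an isometric embedding $\pi|_M\colon M\hookrightarrow\Lambda$. For $\lambda\in\pi(M)$ there is a unique $c(\lambda)\in\Z$ with $\lambda+c(\lambda)f\in M$; the assignment $c$ is linear, so it extends to a linear functional on $\Lambda_\Q$, which by non-degeneracy of $\Lambda$ equals $(-B.\,\cdot\,)$ for a unique $B\in\Lambda_\Q$. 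Now choose $\sigma\in\pi(M)_\C$ very general subject to $(\sigma.\sigma)=0$ and $(\sigma.\bar\sigma)>0$; this is possible because $\pi(M)\cong M$ has signature $(2,22-\rk L)$ with $22-\rk L\ge 10$, and ``very general'' is to be read as ``avoiding a fixed countable union of proper linear subspaces''. By surjectivity of the period map for K3 surfaces, $\sigma$ is the period of a marked K3 surface $X$, and for $\sigma$ very general $\NS(X)=\sigma^\perp\cap\Lambda=\pi(M)^\perp$ has signature $(1,\rk L-3)$; in particular it contains a class of positive square, so $X$ is projective. Viewing $B$ as a class in $\Hh^2(X,\Q)$ via the marking, the convention for $\exp(B)$ gives $\exp(B)\sigma=\sigma+c(\sigma)f$; setting $\omega\colonequals\sigma+c(\sigma)f\in M_\C$, the line $\C\omega$ is the $B$-twisted period of $X$.

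It then remains to check $\widetilde{\Hh}^{1,1}(X,B,\Z)\cong L$. Since $\widetilde{\Hh}^{1,1}(X,B,\Z)=\omega^\perp\cap\widetilde{\Lambda}$ and $\omega\in M_\C$, this lattice contains $L$; conversely, each $v\in\widetilde{\Lambda}\smallsetminus L$ has non-zero orthogonal projection to $M_\Q$ (otherwise $v\in L_\Q$, contradicting primitivity of $L$), so $\{\lambda\in\pi(M)_\C\suchthat v\perp(\lambda+c(\lambda)f)\}$ is a proper linear subspace of $\pi(M)_\C$, which $\sigma$ avoids; hence $\omega^\perp\cap\widetilde{\Lambda}=L$. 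I expect the main difficulty to be the lattice bookkeeping: invoking Nikulin's criterion (with a small separate argument in the extremal case $\rk L=12$, $\ell(A_L)=12$, where one again exploits the isotropic vector of $L$) and verifying that, once $f\in L$, a very general $\omega\in L^\perp$ automatically has the form $\exp(B)\sigma$ for a rational $B$-field $B$ and a genuine K3 period $\sigma$ — this last point is where the hypothesis $(x.x)=0$ and the unimodularity of $\Lambda$ really enter.
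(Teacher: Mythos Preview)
Your argument is correct and takes a genuinely different route from the paper's. Both proofs begin by primitively embedding $L$ into $\widetilde{\Lambda}$ via Nikulin and choosing a very general period in $L^\perp_\C$. From there the paper proceeds abstractly: it observes that $x$ together with a suitable $y\in L$ span a copy of $U\bigl((x.y)^2\bigr)$ inside $L$, and then invokes \cite[Lem.~2.6]{huybrechts2017cubic}, which says that any weight-two Hodge structure of K3 type on $\widetilde{\Lambda}$ whose $(1,1)$-part contains some $U(n)$ is realized as $\widetilde{\Hh}(X,B,\Z)$ for a twisted K3 surface. You instead unwind that black box: normalizing $x$ to $f$ via Eichler, you read off the K3 period $\sigma$ and the $B$-field $B$ directly from the projection $L^\perp\hookrightarrow f^\perp\twoheadrightarrow\Lambda$, and then verify by hand that $\exp(B)\sigma\in L^\perp_\C$ is very general and that $\NS(X)=\pi(M)^\perp$ has a positive class. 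The paper's approach is shorter and more modular, leaning on an external lemma; yours is self-contained and makes the pair $(X,B)$ completely explicit—essentially reproving the relevant special case of \cite[Lem.~2.6]{huybrechts2017cubic}. One small remark: your $B$ is not \emph{uniquely} determined (the functional $c$ is only defined on $\pi(M)$, and any extension to $\Lambda_\Q$ will do), but this does not affect the argument.
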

\begin{proof}
  As $\rk L \leq 12 = \frac{1}{2} \cdot \rk \widetilde{\Lambda}$, there exists a primitive embedding $L \hookrightarrow \widetilde{\Lambda}$ \cite[Thm.~1.12.4]{MR525944}.
  Since the signature of $L$ is $(2,\rk L - 2)$, we can choose $p \in \widetilde{\Lambda}_\C$ such that
  \[ (p.p) = 0 ,\quad (p.\bar{p}) > 0, \quad \text{and} \quad L \subseteq p^\perp. \]
  Using the Baire category theorem, we can even ensure that $L = p^\perp \cap \widetilde{\Lambda}$.
  This defines the following Hodge structure of K3 type on $\widetilde{\Lambda}$:
  \[ \widetilde{\Lambda}^{2,0} \colonequals \C \cdot p ,\quad \widetilde{\Lambda}^{0,2} \colonequals \C \cdot \bar{p}, \quad \text{and} \quad \widetilde{\Lambda}^{1,1} \colonequals (\widetilde{\Lambda}^{2,0} \oplus \widetilde{\Lambda}^{0,2})^\perp. \]

  Let $y \in L$ with $(x.y) \neq 0$.
  Then the sublattice spanned by $x$ and $(x.y) y -\frac{(y.y)}{2} x$ is a twisted hyperbolic plane with intersection matrix $\left(\begin{smallmatrix} 0 & (x.y)^2 \\ (x.y)^2 & 0 \end{smallmatrix}\right)$.
  Thus, \cite[Lem.~2.6]{huybrechts2017cubic} shows that there is a complex K3 surface $X$ and $B \in \Hh^2(X,\Q)$ for which $\widetilde{\Hh}(X,B,\Z)$ is isometric to the Hodge structure on $\widetilde{\Lambda}$ defined above.
  In particular, $\widetilde{\Hh}^{1,1}(X,B,\Z) \simeq L$.

  For some $a \in \Z_{>0}$, there is an immersion $\Z \cdot (a,a B,0) \oplus \NS(X) \oplus \Hh^4(X,\Z) \hookrightarrow \widetilde{\Hh}^{1,1}(X,B,\Z)$ of finite index.
  Therefore, we can find $M \in \Pic(X)$ with $(M)^2 > 0$.
  As a consequence of Grauert's ampleness criterion for surfaces, $X$ is projective.
\end{proof}
\begin{rem}
  Any even, non-degenerate lattice $L$ of rank $5 \leq \rk L \leq 12$ and signature $(2, \rk L -2 )$ satisfies the assumptions of Proposition~\ref{prop:latt_gen_pic_gr} by Meyer's theorem.
\end{rem}
\begin{exmp}
  By Proposition~\ref{prop:latt_gen_pic_gr}, there is a K3 surface $X$ and $B \in \Hh^2(X,\Q)$ such that $\widetilde{\Hh}^{1,1}(X,B,\Z) \simeq \langle 4 \rangle \oplus U(2)$.
  As $\langle 4 \rangle \oplus U(2)$ contains no $(+2)$-classes, there is no primitive embedding $U\hookrightarrow \widetilde{\Hh}^{1,1}(X,B,\Z)$.
  Nevertheless, the non-signed isometry $\id_{\langle 4 \rangle} \oplus -\id_{U(2)}$ of $\widetilde{\Hh}^{1,1}(X,B,\Z)$ acts trivially on $A_{\widetilde{\Hh}^{1,1}(X,B,\Z)}$ and therefore induces a Hodge isometry of $\widetilde{\Hh}(X,B,\Z)$.
\end{exmp}

\subsection{Small Picard rank}\label{subsect:small-Picard-rank}

Finally, we consider very general twisted K3 surfaces.
Denote the standard basis vectors of a hyperbolic plane in $\Lambda$ by $e_1$ and $f_1$.
Fix $d \in \Z_{>0}$ and set $\ell \colonequals e_1 + d\cdot f_1$ and $\Lambda_d \colonequals \Lambda \cap \ell^\perp$.

A \emph{marked, primitively $d$-polarized K3 surface} is a triple $(X,A,\theta)$, consisting of a projective K3 surface $X$, an ample line bundle $A$, and a marking $\theta \colon \Hh^2(X,\Z) \xrightarrow{\sim} \Lambda$ such that $\theta\bigl(\cc_1(A)\bigr) = \ell$.
From here on, we suppress the ample line bundle from the notation.
All marked, primitively $d$-polarized K3 surfaces are parameterized by a fine moduli space $N_d$, which is a $19$-dimensional complex Hausdorff manifold.
Thanks to the local and global Torelli theorem, we can describe $N_d$ explicitly using periods.
Let $D_d \colonequals \lbrace p \in \PP(\Lambda_{d\C}) \suchthat (p.p) = 0,\, (p.\bar{p}) > 0 \rbrace$ be the period domain associated with $\Lambda_d$.
By identifying $(X,\theta)$ with its period $\bigl[\theta\bigl(\Hh^{2,0}(X)\bigr)\bigr] \in \PP(\Lambda_{d\C})$, we get
\[ N_d = D_d \smallsetminus \bigcup_{\substack{\delta \in \Lambda_d \\ (\delta .\delta) = -2}} \PP(\delta^\perp) \]
inside $\PP(\Lambda_{d\C})$.
See for example \cite[Ch.~6]{huybrechts2016lectures} for details.

Now we bring twists into play.
Fix $B \in \Lambda_\Q$ and set $\Lambda^B_d \colonequals (\exp B)(\Lambda_d) \subset \widetilde{\Lambda}_\Q$.
For every marked K3 surface $(X,\theta)$, we obtain the twisted Hodge structure $\widetilde{\Hh}(X,\theta^{-1}(B),\Z)$.

The marking $\theta$ can be extended uniquely to an isomorphism $\widetilde{\theta} \colon \Hh^*(X,\Z) \xrightarrow{\sim} \widetilde{\Lambda}$ which maps $1 \in \Hh^0(X,\Z)$ to the additional basis vector $e \in \widetilde{\Lambda}$;
with the ring structure on $\widetilde{\Lambda}$ from \S 1, $\widetilde{\theta}$ is multiplicative.
Under this extended marking, $\widetilde{\Hh}(X,\theta^{-1}(B),\Z)$ is determined by the \emph{$B$-twisted period} $\PP(\exp B)\bigl(\bigl[\theta\bigl(\Hh^{2,0}(X)\bigr)\bigr]\bigr) \in \PP\bigl(\Lambda^B_{d\C}\bigr)$.
If the $B$-twisted period does not lie on any of the projective hyperplanes $\PP\bigl(\delta^\perp\bigr)$, for $\delta \in \Lambda^B_d \cap \widetilde{\Lambda}$, then $\widetilde{\Hh}^{1,1}(X,\theta^{-1}(B),\Z)$ is isometric to
\[ L^B_d \colonequals \bigl(\Lambda^B_{d\C}\bigr)^\perp \cap \widetilde{\Lambda}. \]

Write $B = \widetilde{B} + \eta_1 e_1 + \eta_2 f_1$ with $\bigl(\widetilde{B}.e_1\bigr) = \bigl(\widetilde{B}.f_1\bigr) = 0$ and $\eta_1, \eta_2 \in \Q$, and put $B' \colonequals B - \eta_1\ell$.
\begin{lem}\label{lem:inters_form_LBd}
  The intersection matrix of the lattice $L^B_d$ is given by
  \[ \begin{pmatrix}
    2b & c & a \\
    c & 2d & 0 \\
    a & 0 & 0
  \end{pmatrix}_{\textstyle \raisebox{3pt}{,}} \]
  where $a \colonequals \per(B') = \min \lbrace k \in \Z_{>0} \suchthat kB' \in \Lambda \rbrace$ is the period of $B'$, $b\colonequals \frac{(aB' . aB')}{2}$, and $c \colonequals (aB' . \ell) = a(\eta_2 - d\eta_1)$.
\end{lem}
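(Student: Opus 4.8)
The plan is to compute the complex subspace $(\Lambda^B_{d\C})^\perp \subset \widetilde{\Lambda}_\C$ explicitly, exhibit a $\Z$-basis of $L^B_d = (\Lambda^B_{d\C})^\perp \cap \widetilde{\Lambda}$, and read off its Gram matrix.

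First I would use that multiplication by $\exp(B)$ is an isometry of $\widetilde{\Lambda}_\C$ with inverse multiplication by $\exp(-B)$; this is immediate from the ring structure on $\widetilde{\Lambda}$ recalled in \S\ref{notation}. Hence $(\Lambda^B_{d\C})^\perp = \exp(B)\bigl((\Lambda_{d\C})^\perp\bigr)$, where orthogonal complements are taken inside $\widetilde{\Lambda}_\C$. Since $\Lambda_d = \Lambda \cap \ell^\perp$ and the hyperbolic plane $U = \langle e,f\rangle$ is orthogonal to $\Lambda$, the complement $(\Lambda_{d\C})^\perp$ is the three-dimensional space $\C e \oplus \C \ell \oplus \C f$. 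A short calculation from the ring structure then gives
\[ \exp(B)\cdot e = e + B - \tfrac{(B.B)}{2}f, \qquad \exp(B)\cdot \ell = \ell - (B.\ell)f, \qquad \exp(B)\cdot f = f. \]
Because $f$ lies in the span, I may clear the $f$-coefficients of the first two generators to get $(\Lambda^B_{d\C})^\perp = \langle e + B,\ \ell,\ f \rangle_\C$; and since $B = B' + \eta_1\ell$ differs from $B'$ by a multiple of $\ell$, in fact $(\Lambda^B_{d\C})^\perp = \langle e + B',\ \ell,\ f \rangle_\C$.

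Next I would determine the integral points of this subspace. A vector $\alpha(e + B') + \beta\ell + \gamma f = \alpha e + (\alpha B' + \beta\ell) + \gamma f$ lies in $\widetilde{\Lambda} = \Z e \oplus \Lambda \oplus \Z f$ exactly when $\alpha, \gamma \in \Z$ and $\alpha B' + \beta\ell \in \Lambda$. Writing $B' = \widetilde{B} + (\eta_2 - d\eta_1)f_1$ with $\widetilde{B}$ orthogonal to the standard hyperbolic summand $U_1 = \Z e_1 \oplus \Z f_1$ of $\Lambda$, note that $B'$ has zero $e_1$-component, so the $e_1$-component of $\alpha B' + \beta\ell$ equals $\beta$; as $U_1$ splits off $\Lambda$ orthogonally, membership in $\Lambda$ can be tested summand by summand, which forces $\beta \in \Z$ and then reduces the remaining condition to $\alpha B' \in \Lambda$, that is, to $a \mid \alpha$ by definition of the period $a = \per(B')$. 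Therefore $L^B_d$ is free with basis $v_1 \colonequals a(e + B')$, $v_2 \colonequals \ell$, $v_3 \colonequals f$.

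Finally, using $(e.e) = (f.f) = 0$, $(e.f) = 1$, and $\Lambda \perp U$, I would compute $(v_1.v_1) = (aB'.aB') = 2b$, $(v_1.v_2) = (aB'.\ell) = c$, $(v_1.v_3) = a$, $(v_2.v_2) = (\ell.\ell) = 2d$, and $(v_2.v_3) = (v_3.v_3) = 0$; here the identity $(B'.\ell) = \eta_2 - d\eta_1$ (which follows from $(\widetilde{B}.e_1) = (\widetilde{B}.f_1) = 0$ together with $(\ell.\ell) = 2d$) yields $c = (aB'.\ell) = a(\eta_2 - d\eta_1)$. This is exactly the asserted intersection matrix. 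The only place demanding genuine care is the integrality analysis: one must keep track of how the decomposition $B = B' + \eta_1\ell$ interacts with the summand $U_1$, the decisive simplifications being the vanishing $e_1$-component of $B'$ and the possibility of checking membership in $\Lambda$ one orthogonal summand at a time.
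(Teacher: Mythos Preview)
Your proof is correct and follows essentially the same route as the paper's: both compute $(\Lambda^B_{d\C})^\perp = \exp(B)\bigl(\C e \oplus \C\ell \oplus \C f\bigr)$, simplify the generators to $e+B'$, $\ell$, $f$, intersect with $\widetilde{\Lambda}$ to obtain the basis $a(e+B')$, $\ell$, $f$, and read off the Gram matrix. The only difference is that you spell out the integrality step (using the vanishing $e_1$-component of $B'$ to force $\beta\in\Z$ and then reduce to $a\mid\alpha$), whereas the paper simply asserts the resulting basis.
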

\begin{proof}
  Denote the standard basis vectors of $U$ in the decomposition $\widetilde{\Lambda} = \Lambda \oplus U$ by $e$ and $f$.
  Then
  \begin{IEEEeqnarray*}{rCl}
    L^B_d & = & \bigl(\Lambda^B_{d\C}\bigr)^\perp \cap \widetilde{\Lambda} \\
    & = &  \exp B (\C \cdot e + \C \cdot \ell + \C \cdot f) \cap \widetilde{\Lambda} \\
    & = & \left(\C \cdot \left( e + B - \tfrac{(B.B)}{2}f \right) + \C \cdot (\ell - (B.\ell)f) + \C \cdot f \right) \cap \widetilde{\Lambda} \\
    & = & (\C \cdot (e + B') + \C \cdot \ell + \C \cdot f) \cap \widetilde{\Lambda} \\
    & = & \Z \cdot (ae + aB') + \Z \cdot \ell + \Z \cdot f.
  \end{IEEEeqnarray*}
  Using this basis, we obtain the intersection matrix in the statement.
\end{proof}
\begin{rem}\label{rem:possible_B}
  All possible combinations of $a \in \Z_{>0}$ and $b,c \in \Z$ can occur.
  Indeed, if $e_2$ and $f_2$ denote the standard basis vectors of a second hyperbolic plane in $\Lambda$, then a given set of such $a$, $b$, and $c$ is attained by $B \colonequals \frac{c \cdot f_1 + e_2 + b \cdot f_2}{a}$.
\end{rem}
In what follows, we say a property holds for a very general point of a complex manifold if it holds outside countably many closed submanifolds of positive codimension.
\begin{thm}\label{thm:spinor_trivial}
  Let $B \in \Lambda_\Q$.
  The following are equivalent:
  \begin{enumerate}[label={\upshape(\arabic*)}]
    \item\label{thm:spinor_trivial_Hodge} a very general point of $N_d$ corresponds to a marked K3 surface $(X,\theta)$ for which all twisted Hodge isometries $\varphi \colon \widetilde{H}(X,\theta^{-1}(B),\Z) \xrightarrow{\sim} \widetilde{H}(X,\theta^{-1}(B),\Z)$ are signed
    \item $\Sigma^\sharp_0\bigl(L^B_d\bigr) \subseteq \Gamma^+_0$, where $\Sigma^\sharp_0\bigl(L^B_d\bigr)$ and $\Gamma^+_0$ are the groups from \S\ref{subsect:lattice_theory}.
  \end{enumerate}
\end{thm}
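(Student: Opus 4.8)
The plan is to connect the Hodge-theoretic condition \ref{thm:spinor_trivial_Hodge} to the purely lattice-theoretic condition via Proposition~\ref{prop:ind_preserv_sign}. The key observation is that for a \emph{very general} marked K3 surface $(X,\theta)$ on $N_d$, the transcendental lattice is as large as possible, so that $\widetilde{\Hh}^{1,1}(X,\theta^{-1}(B),\Z) \simeq L^B_d$ and the Hodge isometries of $\widetilde{\Hh}(X,\theta^{-1}(B),\Z)$ are governed entirely by the finite lattice $L^B_d$. Concretely, I would first argue that, away from a countable union of closed submanifolds of $N_d$ (those $B$-twisted periods that happen to lie on some $\PP(\delta^\perp)$ with $\delta \in \Lambda^B_d \cap \widetilde{\Lambda}$, and those on which the Néron--Severi group jumps), we have an identification $\widetilde{\Hh}^{1,1}(X,\theta^{-1}(B),\Z) \simeq L^B_d$ with $L^B_d$ of signature $(2,1)$; its intersection matrix is the one computed in Lemma~\ref{lem:inters_form_LBd}, which is indefinite, even, and of rank $3$, so Proposition~\ref{prop:ind_preserv_sign} applies to $L^B_d$.

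Next I would spell out the dictionary between Hodge isometries of $\widetilde{\Hh}(X,\theta^{-1}(B),\Z)$ and isometries of $L^B_d$. A Hodge isometry $\varphi$ of $\widetilde{\Hh}(X,\theta^{-1}(B),\Z)$ must preserve the $(2,0)$-part; for a very general period this pins down its restriction to the transcendental lattice $T \colonequals (L^B_d)^\perp \cap \widetilde{\Lambda}$ up to sign, and in fact (since the transcendental period is very general, $\Hom$ of Hodge structures from $T$ to itself is just $\Z$, spanned by $\pm\id$) $\varphi|_T = \pm\id_T$. Tensoring with $\Q$ and using that $-\id_{\widetilde{\Lambda}}$ is a signed Hodge isometry, one reduces to the case $\varphi|_T = \id_T$; then $\varphi$ is determined by an isometry $\psi$ of $L^B_d$ that agrees with $\id_T$ on the glue, i.e.\ $\psi \in \Oo^\sharp(L^B_d)$ (here I would invoke the standard fact, as in \cite[Cor.~1.5.2]{MR525944}, that such $\psi$ extend to $\widetilde{\Lambda}$, compatibly with the two-form). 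Moreover $\varphi$ is signed as an isometry of $\widetilde{\Hh}(X,\theta^{-1}(B),\R)$ if and only if $\psi$ is signed as an isometry of $L^B_d \otimes \R$ (signature $(2,1)$), since the positive-definite directions of $\widetilde{\Hh}$ split, up to the transcendental part on which $\varphi$ acts as $\pm\id$, into the two positive directions of $L^B_d$; one has to track carefully that the natural positive sign structure of Example~\ref{exmp:pos_sign_str} restricts to the evident one on $L^B_d \otimes \R$, which is where the computation of Example~\ref{exmp:non-signed_Hodge_isometry} gets used to handle the $\varphi|_T = -\id_T$ branch. Granting this, "all Hodge isometries are signed" becomes exactly "$\Oo^\sharp(L^B_d) \subseteq \Oo^+(L^B_d)$", which by Proposition~\ref{prop:ind_preserv_sign} is $\Sigma^\sharp_0(L^B_d) \subseteq \Gamma^+_0$, giving the equivalence \ref{thm:spinor_trivial_Hodge} $\Leftrightarrow$ (2).

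For the converse direction one must also check that the very general $(X,\theta)$ on $N_d$ actually \emph{realizes} every element of $\Oo^\sharp(L^B_d)$ as (the $(1,1)$-part of) a Hodge isometry — otherwise condition (2) could hold vacuously while (1) fails for a silly reason, or vice versa. This follows because any $\psi \in \Oo^\sharp(L^B_d)$ extends to an isometry of $\widetilde{\Lambda}$ acting as $\id$ on $T$, hence preserving the period, hence a Hodge isometry; so the group of $(1,1)$-Hodge isometries modulo $\pm\id_T$ is precisely $\Oo^\sharp(L^B_d)$ for very general periods. The main obstacle, as usual in this circle of ideas, is the bookkeeping around orientations: proving that the natural positive sign structure on $\widetilde{\Hh}(X,\theta^{-1}(B),\R)$ is compatible with the splitting off of the transcendental directions, so that "signed on $\widetilde{\Hh}$" and "signed on $L^B_d \otimes \R$" really do match on the nose — and in particular getting the sign right in the $\varphi|_T = -\id_T$ case, where one trades a non-signed contribution on $T$ against a reflection on $L^B_d$. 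Once that compatibility is pinned down, the rest is a routine application of very-generality (Baire category / countable-union arguments on $N_d$) plus the results of \S\ref{subsect:lattice_theory}.
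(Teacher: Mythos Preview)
Your proposal is correct and lands on the same reduction as the paper: both show that condition~\ref{thm:spinor_trivial_Hodge} is equivalent to $\Oo^\sharp(L^B_d) \subseteq \Oo^+(L^B_d)$ and then invoke Proposition~\ref{prop:ind_preserv_sign}. The route to that reduction differs slightly. You split $\widetilde{\Lambda}_\Q$ into transcendental and algebraic parts and use that, for very general periods, the only Hodge isometries of the transcendental lattice $T$ are $\pm\id$. The paper instead works directly on $\PP(\widetilde{\Lambda}_\C)$: an isometry $\psi \in \Oo(\widetilde{\Lambda})$ is a Hodge isometry for the twisted period $p$ precisely when $p \in \Fix(\PP(\psi)) = \bigsqcup_\lambda \PP(\Eig(\psi_\C;\lambda))$, and this locus has positive codimension in $D^B_d$ unless $\Lambda^B_{d\C}$ lies entirely in one eigenspace --- necessarily for eigenvalue $\pm 1$, which after composing with $-\id$ gives the reduction to $\psi|_{\Lambda^B_d \cap \widetilde{\Lambda}} = \id$ and hence to $\Oo^\sharp(L^B_d)$ via \cite[Cor.~1.5.2]{MR525944}. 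The paper's version is marginally more self-contained, as it does not separately invoke that the Hodge endomorphism ring of $T$ is $\Z$ (the eigenspace argument proves the needed special case on the spot). Finally, your worry about ``bookkeeping around orientations'' is unnecessary: for a self-isometry, being signed is intrinsic (Definition~\ref{defn:or_pres}), so once $\varphi|_T = \id_T$ the equivalence of ``$\varphi$ signed'' and ``$\varphi|_{L^B_d}$ signed'' follows immediately from the orthogonal splitting $\widetilde{\Lambda}_\R = T_\R \oplus (L^B_d)_\R$, without any reference to the natural positive sign structure of Example~\ref{exmp:pos_sign_str}.
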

\begin{proof}
  Let $\psi \in \Oo\bigl(\widetilde{\Lambda}\bigr)$.
  Then $\psi$ induces a linear map $\PP(\psi)$ on $\PP\bigl(\widetilde{\Lambda}_\C\bigr)$.
  We denote the set of fixed points of $\PP(\psi)$ by $\Fix\bigl(\PP(\psi)\bigr)$.

  Let $X$ be a primitively $d$-polarized K3 surface with marking $\theta \colon \Hh^2(X,\Z) \xrightarrow{\sim} \Lambda$.
  Let $p \colonequals \PP(\exp B)\bigl(\bigl[\theta\bigl(\Hh^{2,0}(X)\bigr)\bigr]\bigr) \in \PP\bigl(\Lambda^B_{d\C}\bigr)$ be its $B$-twisted period.
  The morphism
  \[ \widetilde{\Hh}(X,\theta^{-1}(B),\Z) \simeq \widetilde{\Lambda} \xrightarrow{\quad\psi\quad} \widetilde{\Lambda} \simeq \widetilde{\Hh}(X,\theta^{-1}(B),\Z) \]
  is a Hodge isometry if and only if $p \in \Fix\bigl(\PP(\psi)\bigr)$.

  Put $D^B_d \colonequals \PP(\exp B)(D_d)$ and $N^B_d \colonequals \PP(\exp B)(N_d)$.
  We have an eigenspace decomposition
  \[ \Fix\bigl(\PP(\psi)\bigr) = \bigsqcup_{\lambda} \PP\bigl(\Eig(\psi_\C;\lambda)\bigr), \]
  where $\lambda$ runs through all eigenvalues of $\psi_\C$.
  Hence, the locus of $B$-twisted periods for which $\psi$ is a Hodge isometry is $\bigsqcup \bigl(N^B_d \cap \PP\bigl(\Eig(\psi_\C;\lambda)\bigr)\bigr)$.
  As $D_d \subset \PP(\Lambda_{d\C})$ is a smooth quadric, $D^B_d \cap \PP\bigl(\Eig(\psi_\C;\lambda)\bigr) \subseteq D^B_d$ is of positive codimension exactly when $\Lambda^B_{d\C} \nsubseteq \Eig(\psi_\C;\lambda)$.
  Moreover, $N^B_d$ is the complement of countably many hyperplane sections in $D^B_d$ and $\Oo(\widetilde{\Lambda})$ is countable.
  Thus, \ref{thm:spinor_trivial_Hodge} holds if and only if $\Lambda^B_{d\C} \nsubseteq \Eig(\psi_\C;\lambda)$ for all non-signed isometries $\psi \in \Oo(\widetilde{\Lambda})$ and all eigenvalues $\lambda$ of $\psi_\C$.

  On the other hand, whenever $\Lambda^B_{d\C} \subseteq \Eig(\psi_\C;\lambda)$ for some $\psi$ and $\lambda$, we have $\lambda = \pm 1$ because $\psi \in \Oo\bigl(\widetilde{\Lambda}\bigr)$ and $\Lambda^B_{d\C}$ is non-isotropic.
  Since $-\id_{\widetilde{\Lambda}}$ is signed, \ref{thm:spinor_trivial_Hodge} holds if and only if every $\psi \in \Oo\bigl(\widetilde{\Lambda}\bigr)$ with $\psi_\C \negthickspace\restriction_{\Lambda^B_{d\C}} = \id_{\Lambda^B_{d\C}}$ is signed.
  By \cite[Cor.~1.5.2, Thm.~1.6.1]{MR525944},
  \[ \left\{ \psi\in \Oo(\widetilde{\Lambda}) \suchthat \psi_\C \negthickspace \restriction_{\Lambda^B_{d\C}} = \id_{\Lambda^B_{d\C}} \right\} = \Bigl\{ \psi\in \Oo(\widetilde{\Lambda}) \;\Big|\; \psi \negthickspace \restriction_{\Lambda^B_d \cap \widetilde{\Lambda}} = \id_{\Lambda^B_d \cap \widetilde{\Lambda}} \Bigr\} \simeq \Oo^\sharp(L^B_d). \]
  Therefore, \ref{thm:spinor_trivial_Hodge} is equivalent to $\Oo^\sharp(L^B_d) \subseteq \Oo^+(L^B_d)$.
  The statement now follows from Proposition~\ref{prop:ind_preserv_sign}.
\end{proof}
\begin{rem}
  For non-polarized K3 surfaces, Theorem~\ref{thm:spinor_trivial} has an even simpler counterpart.
  Let $B \in \Lambda_\Q$.
  Set $a \colonequals \per(B)$ and $b \colonequals \frac{(aB.aB)}{2}$.
  The lattice $L^B \colonequals \bigl(\Lambda^B\bigr)^\perp \cap \widetilde{\Lambda}$ has intersection matrix $\left(\begin{smallmatrix}2b & a \\ a & 0\end{smallmatrix}\right)$.

  Given a marked complex K3 surface $(X,\theta)$, we can still define the twisted Hodge structure $\widetilde{\Hh}(X,\theta^{-1}(B),\Z)$.
  When $(X,\theta)$ is very general, $\widetilde{\Hh}^{1,1}(X,\theta^{-1}(B),\Z) \simeq L^B$.
  Classical theory of isotropic binary quadratic forms (cf.\ e.g.\ \cite[Ch.~13.3]{MR522835}) shows that such an $X$ admits only signed Hodge isometries if and only if neither $2 \mid a$ nor $b \equiv 1 \bmod a$.
\end{rem}
For any prime $p$, let $\Sigma^\sharp_0\bigl(L^B_d \otimes_\Z \Z_p\bigr)$ be the preimage of $\Sigma^\sharp_p(L^B_d)$ under the natural map $\Gamma_0 \to \Gamma_{p,0}$.
As $\Sigma^\sharp_0\bigl(L^B_d\bigr) = \bigcap_p \Sigma^\sharp_0\bigl(L^B_d \otimes_\Z \Z_p\bigr)$, the criterion of Theorem~\ref{thm:spinor_trivial} reduces the question as to whether a very general marked, primitively $d$-polarized K3 surface admits a non-signed $B$-twisted Hodge isometry to the purely local computations of the $\Sigma^\sharp_0\bigl(L^B_d \otimes_\Z \Z_p\bigr)$.
Those have been carried out in \cite[Thm.~VII.12.1--4]{miranda2009embeddings} for all indefinite lattices of rank at least 3.
\signed*
\begin{proof}
  Since the invariant factors of the intersection matrix of $L^B_d$ from Lemma~\ref{lem:inters_form_LBd} are
  \[ g_1 = \gcd(a,2b,c,2d),\, g_2 = \frac{\gcd(a^2,ac,2ad,4bd-c^2)}{g_1} \text{, and } g_3 = \frac{2a^2d}{g_1\cdot g_2}, \]
  the discriminant group of $L^B_d$ is $A_{L^B_d} = \Z / g_1\Z \oplus \Z / g_2\Z \oplus \Z / g_3\Z$.

  If there is a prime $p$ such that $p \mid g_1$ and $p \equiv 3 \mod 4$, \cite[Thm.~VII.12.1]{miranda2009embeddings} implies that $\Sigma^\sharp_0\bigl(L^B_d \otimes_\Z \Z_p\bigr) = \lbrace (1,1) \rbrace$. 
  Hence, $\Sigma^\sharp_0\bigl(L^B_d\bigr) = \lbrace (1,1) \rbrace \subset \Gamma^+_0$.
  In that case, Theorem~\ref{thm:spinor_trivial} shows that a very general marked, primitively $d$-polarized K3 surface does not admit any non-signed twisted Hodge isometries.
  Remark~\ref{rem:possible_B} yields the assertion.
\end{proof}

\section{The moduli space of twisted perfect complexes}\label{sect:mod_compl}

The next two sections set up the deformation theory for the Fourier--Mukai kernel $P$ from Theorem~\ref{thm:norhi}.
First, we discuss some preliminaries.

\subsection{Twisted complexes and gerbes}\label{subsection:gerbes}

Let $X$ be a noetherian algebraic space and $\alpha \in \Hh^2(X,\G_m)$.
We recall one possible construction of the category of $\alpha$-twisted sheaves on $X$.
Choose a $\G_m$-gerbe $\mathcal{X}$ over $X$ whose associated cohomology class is $\alpha$.
The inertia stack of $\mathcal{X}$ induces a $\G_m$-action on every sheaf $\mathcal{F}$ on $\mathcal{X}$.
When $\mathcal{F}$ is quasi-coherent, it has a weight decomposition
\[ \mathcal{F} = \bigoplus_{m \in \Z} \mathcal{F}^{(m)}, \]
where $\G_m$ acts on $\mathcal{F}^{(m)}$ via the character $\lambda \mapsto \lambda^m$; cf.\ \cite[Exp.~I, Prop.~4.7.3]{MR0274458}.

Consequently, the category of quasi-coherent sheaves on $\mathcal{X}$ has an orthogonal decomposition into full subcategories of sheaves of pure weight.
Passing to derived categories, we obtain a similar orthogonal decomposition of $\Db(\mathcal{X})$ into subcategories of perfect complexes of pure weight.
An \emph{$\alpha$-twisted} (or \emph{$\mathcal{X}$-twisted}) quasi-coherent sheaf is a quasi-coherent sheaf $\mathcal{F}$ on $\mathcal{X}$ with $\mathcal{F} = \mathcal{F}^{(1)}$; similarly for perfect complexes.

Work of To\"{e}n--Vaqui\'{e} \cite{MR2493386} and To\"{e}n \cite{MR2957304} shows that twisted perfect complexes are parameterized by a locally geometric, locally finitely presented derived stack.
In order to avoid derived objects, we restrict to complexes without higher automorphism groups.
\begin{defn}\label{defn:moduli_complexes}
  Let $S$ be a noetherian scheme and $f \colon X \to S$ be a smooth and proper morphism of schemes.
  Let $\mathcal{X}$ be a $\G_m$-gerbe over $X$ whose associated cohomology class in $\Hh^2(X,\G_m)$ is torsion.
  We define $\Tw_{\mathcal{X}/S}$ to be the fibered category of $\mathcal{X}$-twisted perfect complexes:
  its fiber $\Tw_{\mathcal{X}/S}(T)$ over a scheme $T \to S$ consists of all perfect complexes $E \in \Db(\mathcal{X} \times T)$ such that
  \begin{enumerate}[label={\upshape(\roman*)}]
    \item $E$ has pure weight $1$ and
    \item\label{defn:moduli_complexes_univ_gluable} $\Ext^i(E_t,E_t) = 0$ for all $i < 0$ and all points $t \in T$, where $E_t$ denotes the fiber of $E$ over $t$.
  \end{enumerate}
\end{defn}
  As explained above, the $\Ext$ groups computed in the category of twisted perfect complexes and in the category of all perfect complexes on $\mathcal{X}$ are equal.
\begin{thm}[{(\cite[Rmk.~5.6]{MR2957304}, \cite[Cor.~3.21]{MR2493386})}]\label{thm:tw_mod_ex}
  The fibered category $\Tw_{\mathcal{X}/S}$ from Definition~\ref{defn:moduli_complexes} is a locally finitely presented algebraic stack.
  We call it the \emph{moduli space of $\mathcal{X}$-twisted complexes}.
\end{thm}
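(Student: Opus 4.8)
The plan is to deduce the statement from the general algebraicity of moduli of objects in dg-categories, applied to the dg-category of $\mathcal{X}$-twisted perfect complexes. First I would produce such a dg-category $T_{\mathcal{X}/S}$ over $S$. The weight decomposition recalled before Definition~\ref{defn:moduli_complexes} exhibits $\Db(\mathcal{X})$ as an orthogonal sum of its pure-weight summands, and the weight-$1$ summand is the category of $\mathcal{X}$-twisted perfect complexes. Since the class of $\mathcal{X}$ in $\Hh^2(X,\G_m)$ is torsion, the main theorem of \cite{MR2957304} represents this summand, locally on $S$, by the perfect modules over a derived Azumaya algebra on $X$; as $X \to S$ is smooth and proper, the resulting relative dg-category $T_{\mathcal{X}/S}$ is of finite type in the sense of \cite{MR2493386}. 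Hence \cite[Cor.~3.21]{MR2493386} yields a locally geometric, locally finitely presented derived stack $\mathbf{M}_{T_{\mathcal{X}/S}}$ over $S$ parameterizing pseudo-perfect objects; this is the content of \cite[Rmk.~5.6]{MR2957304}.

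Next I would carve out $\Tw_{\mathcal{X}/S}$ inside the classical truncation of $\mathbf{M}_{T_{\mathcal{X}/S}}$. Condition~(i) of Definition~\ref{defn:moduli_complexes} is built into the choice of $T_{\mathcal{X}/S}$: its objects are exactly the pure-weight-$1$ perfect complexes, and within $\Db(\mathcal{X})$ this cuts out an open and closed part. For condition~(ii), properness of $f$ and perfectness of the fibers $E_t$ make $t \mapsto \dim \Ext^i(E_t,E_t)$ upper semicontinuous on the base, so the locus where $\Ext^i(E_t,E_t) = 0$ for every $i < 0$ is an open substack of $\mathbf{M}_{T_{\mathcal{X}/S}}$. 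On that open substack the cotangent complex is connective and the deformation theory is unobstructed in negative degrees, so the derived structure sheaf is discrete; the open substack is therefore a classical $1$-algebraic stack, locally of finite presentation over $S$. Identifying it with $\Tw_{\mathcal{X}/S}$ completes the proof.

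The main obstacle is the first step. The weight-$1$ part of $\Db(\mathcal{X})$ is a priori only a module category over $\Db(X)$, not a unital dg-category, so it cannot be fed directly into the moduli construction of \cite{MR2493386}; the torsion hypothesis on the gerbe is exactly what allows one to replace it, via a derived Azumaya algebra, by an honest dg-category that is of finite type and smooth and proper over $S$ in the relative sense. Once this replacement is carried out, the remaining manipulations — decomposition by weight, openness of the vanishing of negative self-exts, and discreteness of the derived structure there — are the formal steps that also turn the derived moduli stack of perfect complexes into the classical stack of universally gluable complexes in the untwisted setting.
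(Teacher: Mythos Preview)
The paper does not give its own proof of this theorem: it is stated as a citation to \cite[Rmk.~5.6]{MR2957304} and \cite[Cor.~3.21]{MR2493386}, and the surrounding Remarks~\ref{rem:G_m_mu_n}--\ref{rem:univ_gluable} only explain how to reconcile the present formulation with those references. Your sketch is a faithful outline of the argument implicit in those citations, and the ``main obstacle'' you isolate---that the weight-$1$ part of $\Db(\mathcal{X})$ must be replaced by an honest dg-category of finite type via a derived Azumaya algebra, which uses the torsion hypothesis---is exactly the point of \cite[Rmk.~5.6]{MR2957304} and of the paper's Remark~\ref{rem:G_m_mu_n} (passing to a $\mu_n$-gerbe).

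One small gap worth noting: when you carve out condition~(ii), you argue that the locus where $\Ext^i(E_t,E_t)=0$ for all $i<0$ and all geometric points $t$ is open by semicontinuity. That is fine, but the cited result \cite[Cor.~3.21]{MR2493386} imposes the a priori stronger requirement that $\Ext^i(E_U,E_U)=0$ for all $i<0$ and all affine $U \to T$. You implicitly assume these coincide when you identify your open substack with $\Tw_{\mathcal{X}/S}$; the paper addresses precisely this point in Remark~\ref{rem:univ_gluable} via a base-change argument. Similarly, the reduction from arbitrary $S$ to affine $S$ (Remark preceding~\ref{rem:univ_gluable}) is a bookkeeping step you pass over. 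Neither is a serious obstacle, but both deserve a sentence.
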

\begin{rem}\label{rem:G_m_mu_n}
  Strictly speaking, \cite[Rmk.~5.6]{MR2957304} mentions $\mu_n$-gerbes, not $\G_m$-gerbes.
  However, as the cohomology class associated to $\mathcal{X}$ is torsion, the category of (quasi-)coherent sheaves twisted by $\mathcal{X}$ is equivalent to the category of (quasi-)coherent sheaves twisted by a suitable $\mu_n$-gerbe, and likewise for perfect complexes; cf.\ \cite[Lem.~3.1.1.12]{MR2388554}.
\end{rem}
\begin{rem}
  The results in \cite[Rmk.~5.6]{MR2957304} and \cite[Cor.~3.21]{MR2493386} are only formulated for affine $S$.
  Nonetheless, when $S$ is arbitrary, we can choose an affine open cover $S = \bigcup S_i$ and bootstrap the statement for $\coprod \Tw_{\mathcal{X}_{S_i}/S_i}$ to that for $\Tw_{\mathcal{X}/S}$; cf.\ \cite[\href{http://stacks.math.columbia.edu/tag/06DC}{Tag~06DC}]{stacks-project}.
\end{rem}
\begin{rem}\label{rem:univ_gluable}
  In place of \ref{defn:moduli_complexes_univ_gluable} in Definition~\ref{defn:moduli_complexes}, \cite{MR2493386} demands that
  \[ \Ext^i(E_U,E_U) = 0 \]
  for all $i < 0$ and all affine morphisms $g \colon U \to T$.
  Yet, the following argument shows that \ref{defn:moduli_complexes_univ_gluable} implies this seemingly stronger condition.

  As $\Ext^i(E_U,E_U) = \Hh^i\bigl(U,\R f_{U,*} \R\mathcal{H}om(E_U,E_U)\bigr)$, it suffices to deduce from condition \ref{defn:moduli_complexes_univ_gluable} that $\R f_{U,*} \R\mathcal{H}om(E_U,E_U)$ is concentrated in non-negative degrees.
  Since $E$ is perfect, \cite[Exp.~I, Prop.~7.1.2]{SGA6} and base change for (untwisted) complexes along Tor independent maps show that
  \[ \R f_{U,*}\R\mathcal{H}om(E_U,E_U) = \R f_{U,*} \LL g^*_\mathcal{X}\R\mathcal{H}om(E,E) = \LL g^* \R f_{T,*} \R\mathcal{H}om(E,E). \]
  Via \cite[\href{http://stacks.math.columbia.edu/tag/0BCD}{Tag~0BCD}]{stacks-project}, the assertion reduces to $\R f_{T,*} \R\mathcal{H}om(E,E) \otimes^\LL \kappa(t) \in \mathrm{D}^{\geq 0}\bigl(\kappa(t)\bigr)$ for all $t \in T$.
  By the reverse argument, this is tantamount to $\Ext^i(E_t,E_t) = 0$ for all $i < 0$ and all $t \in T$.
\end{rem}
\begin{rem}\label{rem:diag_qc}
  The diagonal of $\Tw_{\mathcal{X}/S}$ is quasi-compact.
  This follows directly from \cite[\S 3.3]{MR2493386}.
\end{rem}
We focus on a substack of complexes that is fibered as a $\G_m$-gerbe over an algebraic space.
\begin{defn}
  A complex $E \in \Tw_{\mathcal{X}/S}(T)$ is called \emph{simple} if the natural map of fppf sheaves $\G_{a,T} \to \mathcal{E}nd(E)$ is an isomorphism.
  We denote the full subcategory of $\Tw_{\mathcal{X}/S}$ whose objects are all simple complexes by $s\Tw_{\mathcal{X}/S}$.
\end{defn}
By the infinitesimal lifting criterion, the inclusion $s\Tw_{\mathcal{X}/S} \subset \Tw_{\mathcal{X}/S}$ is \'etale and hence an open immersion.
Let $\mathcal{I} \to s\Tw_{\mathcal{X}/S}$ be the inertia stack of $s\Tw_{\mathcal{X}/S}$.
As the natural map $\G_m \times s\Tw_{\mathcal{X}/S} \to \mathcal{I}$ is an isomorphism, rigidification \cite[Thm.~5.1.5]{MR2007376} makes $s\Tw_{\mathcal{X}/S}$ into a $\G_m$-gerbe $s\Tw_{\mathcal{X}/S} \to \sTwr_{\mathcal{X}/S}$ over an algebraic space $\sTwr_{\mathcal{X}/S}$, the coarse moduli space for $s\Tw_{\mathcal{X}/S}$; cf.\ also \cite[Cor.~3.22]{MR2493386}.

\subsection{Fourier--Mukai kernels as open immersions}\label{subsection:FM_map}

Let $(X,\alpha)$ and $(X',\alpha')$ be two smooth, projective twisted varieties.
Let $\Phi_P \colon \Db (X,\alpha) \to \Db(X',\alpha')$ be a fully faithful twisted Fourier--Mukai transform.
We generalize \cite[\S 5]{MR3429474} to interpret the Fourier--Mukai kernel $P$ as an open immersion from $X$ into the moduli space of $\alpha'$-twisted perfect complexes on $X'$.

First, choose $\G_m$-gerbes $\mathcal{X} \to X$ and $\mathcal{X}' \to X'$ that represent $\alpha^{-1}$ and $\alpha'$.
Just as in \S\ref{subsection:gerbes}, $P$ can be identified with a perfect complex of weight $(1,1)$ on the $\G_m \times \G_m$-gerbe $\mathcal{X} \times \mathcal{X}' \to X \times X'$.
\begin{lem}\label{lem:Hochschild_cohom}
  Let $\pi$ and $\pi'$ denote the projections from $X \times X$ and $X \times X'$ onto the first factor, respectively.
  Then convolution with $P$ gives a quasi-isomorphism
  \[ \gamma \colon \R\pi_* \R\mathcal{H}om_{\mathcal{X} \times \mathcal{X}}\bigl(\mathcal{O}_{\Delta_\mathcal{X}},\mathcal{O}_{\Delta_\mathcal{X}}\bigr) \to \R\pi'_* \R\mathcal{H}om_{\mathcal{X} \times \mathcal{X}'}(P,P) \]
  of complexes on $X$.
\end{lem}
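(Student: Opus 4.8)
The plan is to reduce this to a standard fact about fully faithful Fourier--Mukai functors, namely that they induce isomorphisms on Hochschild cohomology, adapted to the twisted setting. First I would unwind the meaning of the two complexes. The left-hand side $\R\pi_* \R\mathcal{H}om_{\mathcal{X} \times \mathcal{X}}(\mathcal{O}_{\Delta_\mathcal{X}},\mathcal{O}_{\Delta_\mathcal{X}})$ is the complex on $X$ whose hypercohomology computes the Hochschild cohomology $\mathit{HH}^*(X)$ (untwisted, since the diagonal gerbe $\mathcal{X}\times\mathcal{X}\to X\times X$ represents $\alpha^{-1}\boxtimes\alpha$, whose restriction to the diagonal is trivial, so $\mathcal{O}_{\Delta_\mathcal{X}}$ makes sense as a weight-$(1,1)$ sheaf and the sheaf-$\mathcal{H}om$ kills the twist). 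The right-hand side $\R\pi'_* \R\mathcal{H}om_{\mathcal{X}\times\mathcal{X}'}(P,P)$ is the complex on $X$ whose hypercohomology computes $\Ext^*_{\mathcal{X}\times\mathcal{X}'}(P,P)$, i.e.\ the natural transformations $\Phi_P\to\Phi_P[*]$, again with the twist cancelling inside the internal $\mathcal{H}om$. The content of the lemma is that these agree not just after taking global sections but already as complexes on $X$, via the convolution map.

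Next I would construct the map $\gamma$ explicitly. Writing $\Phi_P\colon E\mapsto \Rr\pr_{X',*}(\pr_X^*E\otimes^{\LL} P)$ and similarly for the kernel $\mathcal{O}_{\Delta_\mathcal{X}}$ of the identity, convolution $P\mapsto P\circ(-)$ and $(-)\circ\mathcal{O}_{\Delta_\mathcal{X}}$ gives for any kernels a functorial morphism on the relative $\R\mathcal{H}om$ along the appropriate projections; concretely $\gamma$ is the composite expressing that applying $\Phi_P$ to the identity natural transformation of $\mathrm{id}_{\Db(X,\alpha)}$ produces the identity of $\Phi_P$, sheafified over $X$. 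One then checks that on fibers over a point $x\in X$, $\gamma$ specializes to the map $\Rr\Hom(\mathcal{O}_x,\mathcal{O}_x)\to\Rr\Hom(\Phi_P\mathcal{O}_x,\Phi_P\mathcal{O}_x)=\Rr\Hom(P_x,P_x)$ induced by the functor $\Phi_P$. Since $\Phi_P$ is fully faithful, this fiberwise map is a quasi-isomorphism. The twisting causes no trouble here: by the weight decomposition of \S\ref{subsection:gerbes} the relevant $\Ext$-groups on the gerbe coincide with those in the twisted category, and $\Phi_P$ is fully faithful as a functor of twisted derived categories by hypothesis.

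Finally I would promote the fiberwise quasi-isomorphism to a global one. Both sides are perfect complexes on $X$ (the source because $X$ is smooth and proper, so Hochschild cohomology is bounded; the target because $P$ is a perfect complex and $\pi'$ is smooth and proper), and a morphism of perfect complexes on a scheme that is a quasi-isomorphism on every fiber is a quasi-isomorphism—this is the derived Nakayama / cohomology-and-base-change argument already used in Remark~\ref{rem:univ_gluable} (invoking \cite[\href{http://stacks.math.columbia.edu/tag/0BCD}{Tag~0BCD}]{stacks-project} together with base change for untwisted complexes along the Tor-independent fiber inclusions). The main obstacle is the bookkeeping: making $\gamma$ genuinely functorial at the level of complexes (rather than just on cohomology groups) in the twisted/gerby setting, and verifying that base change to a point is legitimate despite the gerbe structure—this is handled exactly as in Remark~\ref{rem:univ_gluable}, since the internal $\R\mathcal{H}om$ of two weight-$1$ complexes has weight $0$ and hence descends, so all the base-change statements are really statements about untwisted complexes on $X$, $X'$, and their products.
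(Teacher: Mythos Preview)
Your proposal is correct and follows essentially the same route as the paper: reduce to fibers over closed points $x\in X$, identify the fiber of $\gamma$ with the functoriality map $\R\!\Hom\bigl(\kappa^\alpha(x),\kappa^\alpha(x)\bigr)\to\R\!\Hom\bigl(\Phi_P\kappa^\alpha(x),\Phi_P\kappa^\alpha(x)\bigr)$, and conclude by full faithfulness of $\Phi_P$. The paper carries out the ``bookkeeping'' you flag as the main obstacle by introducing the contracted products $\mathcal{X}_x\wedge\mathcal{X}$ and $\mathcal{X}_x\wedge\mathcal{X}'$ (rigidifications along the diagonal and antidiagonal of $\G_m\times\G_m$), which make precise the descent of the weight-$(1,\pm 1)$ complexes $\LL(\iota_x\times\id)^*\mathcal{O}_{\Delta_{\mathcal{X}}}$ and $\LL(\iota_x\times\id)^*P$ to honest $\alpha^{\pm 1}$- and $\alpha'$-twisted objects and thereby pin down $\gamma_x$ as exactly the map induced by $\Phi_P$ on twisted skyscrapers.
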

Here, we regard $\mathcal{O}_{\Delta_\mathcal{X}}$ as a sheaf of weight $(1,-1)$ on the $\G_m \times \G_m$-gerbe $\mathcal{X} \times \mathcal{X} \to X\times X$.
Since both internal Hom complexes are of weight $(0,0)$, we identify them via derived pushforward with complexes on the underlying coarse spaces.
\begin{proof}
  It is enough to prove that for each closed point $x \in X$, the induced morphism on fibers $\gamma_x$ is an isomorphism.
  Fix $x \in X$ closed.
  Let $\iota_x \colon \mathcal{X}_x \hookrightarrow \mathcal{X}$ be the inclusion of the fiber of $\mathcal{X}$ over $x$.
  By base change along Tor independent maps, $\gamma_x$ is the convolution
  \[ \R\!\Hom_{\mathcal{X}_x \times \mathcal{X}}\bigl(\LL(\iota_x \times \id_\mathcal{X})^* \mathcal{O}_{\Delta_\mathcal{X}},\LL(\iota_x \times \id_\mathcal{X})^* \mathcal{O}_{\Delta_\mathcal{X}}\bigr) \to \R\!\Hom_{\mathcal{X}_x \times \mathcal{X}'}\bigl(\LL(\iota_x \times \id_{\mathcal{X}'})^* P,\LL(\iota_x \times \id_{\mathcal{X}'})^* P\bigr). \]

  The character $\G_m \times \G_m \to \G_m$ given by $(x,y) \mapsto x^{-1}y$ induces by surjective functoriality a $\G_m$-gerbe $\mathcal{X}_x \wedge \mathcal{X} \to \{x\} \times X$ together with a morphism $\mathcal{X}_x \times \mathcal{X} \to \mathcal{X}_x \wedge \mathcal{X}$, which is the rigidification along the diagonal of $\G_m \times \G_m$ \cite[\S IV.2.3.18]{MR0344253}.
  Similarly, the character $\G_m \times \G_m \to \G_m$ given by $(x,y) \mapsto xy$ produces $\mathcal{X}_x \times \mathcal{X}' \to \mathcal{X}_x \wedge \mathcal{X}'$, the rigidification along the antidiagonal of $\G_m \times \G_m$.
  Combining both characters, we obtain $\mathcal{X}_x \wedge (\mathcal{X} \times \mathcal{X}') \simeq (\mathcal{X}_x \wedge \mathcal{X}) \times (\mathcal{X}_x \wedge \mathcal{X}')$.
  Everything is summarized in the diagram below:
  \[ \begin{tikzcd}
      & \mathcal{X}_x \times \mathcal{X} \times \mathcal{X}' \arrow[ld, "\pr_{12}"'] \arrow[d, "p''"] \arrow[rd, "\pr_{13}"] & \\
      \mathcal{X}_x \times \mathcal{X} \arrow[d, "p"'] & (\mathcal{X}_x \wedge \mathcal{X}) \times (\mathcal{X}_x \wedge \mathcal{X}') \arrow[ld, "\pr_1"'] \arrow[rd, "\pr_2"] & \mathcal{X}_x \times \mathcal{X}' \arrow[d, "p'"] \\
      \mathcal{X}_x \wedge \mathcal{X} && \mathcal{X}_x \wedge \mathcal{X}'.
  \end{tikzcd} \]

  Since the gerbe $\mathcal{X}_x \to \Spec \C$ has a section, $\mathcal{X}_x \wedge \mathcal{X}$ and $\mathcal{X}_x \wedge \mathcal{X}'$ still represent $\alpha^{-1}$ and $\alpha'$, respectively \cite[\S IV.3.3.2]{MR0344253}.
  The kernel $\R p''_*\LL\!\pr^*_{23}P$ induces $\Phi_P$ on $(\mathcal{X}_x \wedge \mathcal{X}) \times (\mathcal{X}_x \wedge \mathcal{X}')$.
  Moreover, $\LL(\iota_x \times \id_\mathcal{X})^* \mathcal{O}_{\Delta_\mathcal{X}}$ and $\LL(\iota_x \times \id_{\mathcal{X}'})^* P$ are invariant under the diagonal and antidiagonal action, respectively;
  $\kappa^\alpha(x) \colonequals \R p_*\LL(\iota_x \times \id_\mathcal{X})^* \mathcal{O}_{\Delta_\mathcal{X}}$ is the skyscraper sheaf supported on $x$ with a $\G_m$-action of weight $-1$ and $\R p'_*\LL(\iota_x \times \id_{\mathcal{X}'})^* P = \Phi_P\bigl(\kappa^\alpha(x)\bigr)$ (using the kernel described in the previous sentence).
  Under these identifications, $\gamma_x$ becomes the natural map
  \[ \R\!\Hom_{\mathcal{X}_x \wedge \mathcal{X}}\bigl(\kappa^\alpha(x),\kappa^\alpha(x)\bigr) \to \R\!\Hom_{\mathcal{X}_x \wedge \mathcal{X}'}\bigl(\Phi_P\bigl(\kappa^\alpha(x)\bigr),\Phi_P\bigl(\kappa^\alpha(x)\bigr)\bigr) \]
  induced by functoriality.
  This is an isomorphism because $\Phi_P$ is fully faithful.
\end{proof}
More conceptually, Lemma~\ref{lem:Hochschild_cohom} is a consequence of the existence of twisted Hochschild cohomology.
We now prove a twisted analog of \cite[Lem.~5.2(i)]{MR3429474}.
\begin{lem}\label{lem:FM_morphism}
  The Fourier--Mukai kernel $P$ defines a morphism $\mu_P \colon \mathcal{X} \to s\Tw_{\mathcal{X}'/\C}$ such that for any $f \colon T \to \mathcal{X}$, the composition $\mu_P \circ f$ is given by $\LL(f \times \id_{\mathcal{X}'})^*P \in \Db(T \times \mathcal{X}')$.
\end{lem}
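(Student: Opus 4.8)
The plan is to construct $\mu_P$ directly on the level of functors of points. Because $\Tw_{\mathcal{X}'/\C}$ is a stack (Theorem~\ref{thm:tw_mod_ex}) and $\mathcal{X}$ is an algebraic stack, a morphism $\mathcal{X} \to \Tw_{\mathcal{X}'/\C}$ is the same datum as a compatible assignment, to every scheme $T$ and every $f \colon T \to \mathcal{X}$, of an object of $\Tw_{\mathcal{X}'/\C}(T)$; I would take this object to be $E_f \colonequals \LL(f \times \id_{\mathcal{X}'})^*P \in \Db(T \times \mathcal{X}')$. With this definition the asserted formula $\mu_P \circ f = \LL(f \times \id_{\mathcal{X}'})^*P$ is tautological, so the content of the lemma lies entirely in three verifications: that $E_f$ really belongs to $\Tw_{\mathcal{X}'/\C}(T)$; that $f \mapsto E_f$ is compatible with base change in $f$, hence defines a morphism of stacks; and that this morphism factors through the open substack $s\Tw_{\mathcal{X}'/\C}$.

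For the first verification, $E_f$ is perfect because $P$ is perfect and derived pullback preserves perfection, and it has pure weight $1$ for the $\G_m$-action induced by $\mathcal{X}' \to X'$ because $P$ has weight $(1,1)$ on $\mathcal{X} \times \mathcal{X}'$ while $f \times \id_{\mathcal{X}'}$ is $\G_m$-equivariant in the second factor, so $\LL(f \times \id_{\mathcal{X}'})^*$ carries the weight-$1$ component of $P$ to a complex of pure weight $1$. To check condition \ref{defn:moduli_complexes_univ_gluable} of Definition~\ref{defn:moduli_complexes}, fix a point $t \in T$ with image $x \in \mathcal{X}$; transitivity of derived pullback identifies the fiber of $E_f$ at $t$ with $P_x \colonequals \LL(\iota_x \times \id_{\mathcal{X}'})^*P$, and then, arguing exactly as in the proof of Lemma~\ref{lem:Hochschild_cohom} --- rigidifying $\mathcal{X}_x \times \mathcal{X}'$ along the antidiagonal of $\G_m \times \G_m$ --- one identifies $P_x$ with the twisted skyscraper image $\Phi_P\bigl(\kappa^\alpha(x)\bigr)$ and, by full faithfulness of $\Phi_P$, obtains $\R\Hom(P_x, P_x) \simeq \R\Hom\bigl(\kappa^\alpha(x), \kappa^\alpha(x)\bigr)$. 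Since $\kappa^\alpha(x)$ is a skyscraper sheaf carrying a $\G_m$-weight, the latter complex is concentrated in degrees $0, \dots, \dim X$ with $\Hh^0 = \C$; in particular $\Ext^i(P_x, P_x) = 0$ for $i < 0$, so $E_f \in \Tw_{\mathcal{X}'/\C}(T)$. Compatibility with base change is immediate, since $E_{f \circ g} \simeq \LL(g \times \id_{\mathcal{X}'})^*E_f$ canonically for $g \colon T' \to T$, so we obtain a morphism $\mu_P \colon \mathcal{X} \to \Tw_{\mathcal{X}'/\C}$. Finally, as $s\Tw_{\mathcal{X}'/\C} \hookrightarrow \Tw_{\mathcal{X}'/\C}$ is an open immersion, $\mathcal{U} \colonequals \mu_P^{-1}\bigl(s\Tw_{\mathcal{X}'/\C}\bigr)$ is open in $\mathcal{X}$, and a point $x$ of $\mathcal{X}$ belongs to $\mathcal{U}$ precisely when the object $P_x$ --- which by the above already lies in $\Tw_{\mathcal{X}'/\C}$, so that the remaining negative $\Ext$-groups vanish --- is simple; since $\Hom(P_x, P_x) \simeq \Hom\bigl(\kappa^\alpha(x), \kappa^\alpha(x)\bigr) = \C$, every point of $\mathcal{X}$ lies in $\mathcal{U}$, whence $\mathcal{U} = \mathcal{X}$ and $\mu_P$ factors through $s\Tw_{\mathcal{X}'/\C}$.

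The main obstacle is not conceptual but the gerbe bookkeeping inside the first verification: one must identify the fiber $P_x$, as an $\alpha'$-twisted complex on $\mathcal{X}'$, with the image of a twisted skyscraper under $\Phi_P$, and then transport the vanishing $\Ext^{<0}\bigl(\kappa^\alpha(x), \kappa^\alpha(x)\bigr) = 0$ back through the (anti)diagonal rigidifications. This is precisely what the proof of Lemma~\ref{lem:Hochschild_cohom} already accomplishes for the map $\gamma_x$, so it can be invoked essentially verbatim; the only genuinely new points are the $\G_m$-equivariance of $f \times \id_{\mathcal{X}'}$, which guarantees that pure weight $1$ is preserved under pullback, and the observation that simplicity, being an open condition via the open immersion $s\Tw_{\mathcal{X}'/\C} \subseteq \Tw_{\mathcal{X}'/\C}$, need only be verified on fibers.
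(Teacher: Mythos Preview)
Your proposal is correct and follows essentially the same approach as the paper: define $\mu_P$ on the level of points by $f \mapsto \LL(f \times \id_{\mathcal{X}'})^*P$, check weight $1$ from the weight $(1,1)$ of $P$, and invoke Lemma~\ref{lem:Hochschild_cohom} for the negative $\Ext$-vanishing and for simplicity. The only cosmetic difference is that the paper reads off $\G_{a,T} \xrightarrow{\sim} \mathcal{E}nd(P_T)$ directly from the quasi-isomorphism $\gamma$ of Lemma~\ref{lem:Hochschild_cohom}, whereas you check $\Hom(P_x,P_x)=\C$ pointwise and then appeal to openness of $s\Tw_{\mathcal{X}'/\C}\subset\Tw_{\mathcal{X}'/\C}$; both routes are valid and amount to the same thing.
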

\begin{proof}
  We must verify that $P_T \colonequals \LL(f \times \id_{\mathcal{X}'})^*P$ meets the conditions of Definition~\ref{defn:moduli_complexes} for any $f \colon T \to \mathcal{X}$.
  As $P$ has weight $(1,1)$, the pullback $P_T$ to the $\G_m$-gerbe $T \times \mathcal{X}' \to T \times X'$ has weight $1$.
  By Lemma~\ref{lem:Hochschild_cohom}, $\Ext^i\bigl((P_T)_t,(P_T)_t\bigr) = 0$ for all $i < 0$ and all $t \in T$ closed.
  Hence, $P$ induces a morphism $\mathcal{X} \to \Tw_{\mathcal{X}'/\C}$.
  In fact, Lemma~\ref{lem:Hochschild_cohom} shows that $\G_{a,T} \xrightarrow{\sim} \mathcal{E}nd(P_T)$ for all $f \colon T \to \mathcal{X}$, so $\mathcal{X} \to \Tw_{\mathcal{X}'/\C}$ factors through a unique morphism
  \[ \mu_P \colon \mathcal{X} \to s\Tw_{\mathcal{X}'/\C}. \qedhere \]
\end{proof}
Since $\mathcal{X} \to X$ is the coarse moduli space, it is initial for morphisms to algebraic spaces.
Therefore, the composition $\mathcal{X} \xrightarrow{\mu_P} s\Tw_{\mathcal{X}'/\C} \to \sTwr_{\mathcal{X}'/\C}$ factors through a unique morphism $\bar{\mu}_P \colon X \to \sTwr_{\mathcal{X}'/\C}$.
\begin{defn}\label{defn:FM_fiber}
  Let $x \in X$ be a closed point.
  After choosing a lift $\tilde{x} \colon \Spec \C \to \mathcal{X}$, define
  \[ P_x \colonequals \LL(\tilde{x} \times \id_{\mathcal{X}'})^* P. \]
  As the pullbacks under any two such lifts are (non-canonically) isomorphic, $P_x$ is a well-defined element of $\sTwr_{\mathcal{X}'/\C}(\C)$.
\end{defn}
With this definition, $\bar{\mu}_P(x) = \bigl[\Phi_P\bigl(\kappa^\alpha(x)\bigr)\bigr] = P_x$, where, as before, $\kappa^\alpha(x)$ denotes the $\alpha$-twisted skyscraper sheaf at $x$.
\begin{lem}\label{lem:open_immers}
  The map $\bar{\mu}_P \colon X \to \sTwr_{\mathcal{X}'/\C}$ is an open immersion.
\end{lem}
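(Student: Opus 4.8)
The plan is to establish that $\bar\mu_P$ is an open immersion by checking, in turn, that it is (a) a monomorphism, (b) unramified, (c) flat, and (d) of finite presentation; an unramified flat morphism of finite presentation is étale, and an étale monomorphism is an open immersion. For injectivity on points, suppose $\bar\mu_P(x)=\bar\mu_P(y)$ for closed points $x,y\in X$. Then $\Phi_P(\kappa^\alpha(x))\cong\Phi_P(\kappa^\alpha(y))$ in $\Db(X',\alpha')$ (up to the $\G_m$-gerbe ambiguity, which does not affect isomorphism classes), and since $\Phi_P$ is fully faithful we get $\kappa^\alpha(x)\cong\kappa^\alpha(y)$, hence $x=y$. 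To see $\bar\mu_P$ is unramified, I would compute the relative cotangent complex, or equivalently show the map on tangent spaces is injective: the tangent space to $\sTwr_{\mathcal{X}'/\C}$ at $P_x$ is $\Ext^1(P_x,P_x)$, and the differential of $\bar\mu_P$ at $x$ is the composite $\Hh^1(X,\mathcal{T}_X)\to\Ext^1\bigl(\kappa^\alpha(x),\kappa^\alpha(x)\bigr)\xrightarrow{\;\sim\;}\Ext^1(P_x,P_x)$, where the first map is the Kodaira--Spencer-type map $T_xX\to\Ext^1(\kappa^\alpha(x),\kappa^\alpha(x))$ attached to deforming the skyscraper, and the second is an isomorphism by (the fiberwise form of) Lemma~\ref{lem:Hochschild_cohom}. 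The first map is injective because $\Ext^1(\kappa(x),\kappa(x))\cong T_xX$ already in the untwisted case and the twist by a $\G_m$-gerbe, being locally trivial, does not change local $\Ext$ computations.

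The heart of the argument is flatness. Here I would argue that $\mu_P\colon\mathcal{X}\to s\Tw_{\mathcal{X}'/\C}$ (and hence $\bar\mu_P$) is formally smooth by a deformation-theoretic computation: given a square-zero extension $T\hookrightarrow\bar T$ and a twisted complex $P_T$ over $T$ pulled back along some $f\colon T\to\mathcal{X}$, obstructions to lifting $P_T$ to $\bar T$ live in $\Ext^2(P_x,P_x)$ and the set of lifts is a torsor under $\Ext^1(P_x,P_x)$; on the other hand, obstructions to lifting $f$ itself live in $\Hh^2$-type groups for $\mathcal{X}$. Using Lemma~\ref{lem:Hochschild_cohom} to identify $\R\mathcal{H}om(P,P)$ with $\R\mathcal{H}om(\mathcal{O}_{\Delta_\mathcal{X}},\mathcal{O}_{\Delta_\mathcal{X}})$, and the fact that for a smooth surface (more generally smooth variety) the Hochschild cohomology $\Ext^*_{\mathcal{X}\times\mathcal{X}}(\mathcal{O}_{\Delta},\mathcal{O}_{\Delta})$ decomposes via HKR into $\bigoplus_{p+q=*}\Hh^p(X,\wedge^q\mathcal{T}_X)$, one sees that the deformation theory of $P_T$ as an object of $s\Tw_{\mathcal{X}'/\C}$ is controlled by exactly the same groups as the deformation theory of $\mathcal{X}$ itself — so every first-order deformation of $P_x$ comes from a deformation of $x\in\mathcal{X}$ and the obstruction maps match. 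This gives smoothness of $\mu_P$, and combined with unramifiedness (relative dimension $0$), étaleness. Alternatively — and this is likely the cleaner route, following \cite[\S5]{MR3429474} — one shows directly that the map on deformation and obstruction spaces induced by $\gamma$ is an isomorphism in each relevant degree, so by Schlessinger-type criteria $\bar\mu_P$ is étale; I expect this step, making the obstruction theory comparison precise in the twisted setting, to be the main obstacle.

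Finally, finite presentation of $\bar\mu_P$ follows since $X$ is of finite type over $\C$, $\sTwr_{\mathcal{X}'/\C}$ is locally of finite presentation (Theorem~\ref{thm:tw_mod_ex}), and $\bar\mu_P$ is a monomorphism with quasi-compact diagonal coming from Remark~\ref{rem:diag_qc}. Putting the pieces together: $\bar\mu_P$ is an étale monomorphism of finite presentation between (the underlying space of a $\G_m$-gerbe over) algebraic spaces, hence an open immersion. In effect the open image is the locus of points of $\sTwr_{\mathcal{X}'/\C}$ parametrizing the twisted complexes $\Phi_P(\kappa^\alpha(x))$, which deform within the family precisely as the point $x$ varies over $X$.
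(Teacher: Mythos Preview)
Your outline is defensible, but it is substantially more elaborate than the paper's proof, and the flatness step contains some confusion. The paper (following \cite[Lem.~5.2(ii)]{MR3429474}) does only two things: it checks that $\bar\mu_P$ is injective on closed points exactly as you do, and it observes that the tangent map
\[
T_xX \;\xrightarrow{\ \sim\ }\; \Ext^1\bigl(\kappa^\alpha(x),\kappa^\alpha(x)\bigr) \;\xrightarrow{\ \sim\ }\; \Ext^1(P_x,P_x) \;\simeq\; T_{[P_x]}\bigl(\sTwr_{\mathcal{X}'/\C}\bigr)
\]
is an \emph{isomorphism}, not merely injective: the first arrow is the standard identification for a smooth point, and the second is the fiberwise form of Lemma~\ref{lem:Hochschild_cohom}. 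Since $X$ is smooth, this alone forces $\bar\mu_P$ to be \'etale---the map on completed local rings is a surjection from a complete local ring of embedding dimension $\dim X$ onto the regular ring $\widehat{\mathcal{O}}_{X,x}$, hence an isomorphism. No separate flatness argument, obstruction theory, or HKR is needed. Your ``alternative'' route via matching deformation spaces is essentially this, and is the one the paper takes.

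Two specific issues with your main line. First, $\Hh^1(X,\mathcal{T}_X)$ is a slip: that is the tangent space to moduli of K3 surfaces, not $T_xX$; you mean the latter, as your next sentence confirms. Second, your invocation of HKR conflates two different things. The decomposition $\Ext^*_{\mathcal{X}\times\mathcal{X}}(\mathcal{O}_\Delta,\mathcal{O}_\Delta)\simeq\bigoplus\Hh^p(X,\wedge^q\mathcal{T}_X)$ is \emph{global} Hochschild cohomology, which governs deformations of $X$ (or of $\Db(X)$); it is not what controls deformations of a single point $x$ or of the object $P_x$. What you actually need is the \emph{fiber} of Lemma~\ref{lem:Hochschild_cohom} at $x$, namely $\Ext^i\bigl(\kappa^\alpha(x),\kappa^\alpha(x)\bigr)\simeq\Ext^i(P_x,P_x)$, together with the local computation $\Ext^i\bigl(\kappa(x),\kappa(x)\bigr)\simeq\wedge^i T_xX$ for a smooth point. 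Once you recognize that the $i=1$ case already gives a tangent \emph{isomorphism}, the entire detour through formal smoothness and obstruction matching becomes unnecessary.
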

\begin{proof}
  The proof carries over directly from \cite[Lem.~5.2(ii)]{MR3429474}, where it is given in complete detail.
  We use the full faithfulness of $\Phi_P$ to check that $\bar{\mu}_P$ is an \'etale monomorphism.
  For distinct closed points $x,y \in X$,
  \[ \Hom(P_x,P_y) \simeq \Hom\bigl(\kappa^\alpha(x),\kappa^\alpha(y)\bigr) = 0. \]
  Hence, $\bar{\mu}_P(x) = P_x$ and $\bar{\mu}_P(y) = P_y$ are not isomorphic and $\bar{\mu}_P$ is a monomorphism.

  Since $s\Tw_{\mathcal{X}'/\C} \to \sTwr_{\mathcal{X}'/\C}$ is a $\G_m$-gerbe, we have
  \[ T_{[P_x]}\bigl(\sTwr_{\mathcal{X}'/\C}\bigr) \simeq T_{[P_x]}\bigl(s\Tw_{\mathcal{X}'/\C}\bigr) \simeq \Ext^1(P_x,P_x) \]
  for any closed point $x \in X$ \cite[Cor.~3.17]{MR2493386}.
  The map on tangent spaces induced by $\bar{\mu}_P$ can be identified with the convolution with $P$
  \[ T_xX \simeq \Ext^1\bigl(\kappa^\alpha(x),\kappa^\alpha(x)\bigr) \to \Ext^1(P_x,P_x) \simeq T_{[P_x]}\bigl(\sTwr_{\mathcal{X}'/\C}\bigr) \]
  and is therefore an isomorphism.
  As $X$ is smooth, $\bar{\mu}_P$ is \'etale.
\end{proof}

\subsection{The Hilbert stack}\label{subsect:Hilbert-stack}

Quasi-finite morphisms to $\sTwr_{\mathcal{X}'/\C}$ such as $\bar{\mu}_P$ are classified by a Hilbert stack.
\begin{defn}
  Let $\mathcal{Y} \to S$ be a morphism of algebraic stacks.
  The Hilbert stack $\HS_{\mathcal{Y}/S}$ of $\mathcal{Y}/S$ is the $S$-stack whose fiber over a scheme $T \to S$ is given by
  \[ \HS_{\mathcal{Y}/S}(T) \colonequals
  \begin{Bmatrix}
    \text{quasi-finite, representable morphisms } Z \to \mathcal{Y} \times_S T \\
    \text{such that } Z \text{ is proper, flat, and finitely presented over } T
  \end{Bmatrix}. \]
\end{defn}
The following will later provide a good framework for deforming $\bar{\mu}_P$ and thus $P$.
\begin{lem}\label{lem:HS_algebraic}
  Let $S$ be a noetherian scheme, $X \to S$ a smooth and proper morphism of schemes, and $\mathcal{X}$ a $\G_m$-gerbe over $X$ corresponding to a torsion class.
  Let $Y \subseteq \sTwr_{\mathcal{X}/S}$ be an open subspace.
  Then $\HS_{Y/S}$ is represented by a locally finitely presented algebraic stack.
\end{lem}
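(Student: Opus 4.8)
The plan is to apply the algebraicity criterion for Hilbert stacks of algebraic stacks due to Hall–Rydh, after reducing from the $\G_m$-gerbe $\sTwr_{\mathcal{X}/S}$ to the open subspace $Y$. First I would recall that, by Definition~\ref{defn:moduli_complexes} and the discussion following Theorem~\ref{thm:tw_mod_ex}, $\Tw_{\mathcal{X}/S}$ is a locally finitely presented algebraic stack over $S$ with quasi-compact diagonal (Remark~\ref{rem:diag_qc}); the open substack $s\Tw_{\mathcal{X}/S}$ inherits these properties, and rigidification exhibits it as a $\G_m$-gerbe over the algebraic space $\sTwr_{\mathcal{X}/S}$, which is therefore locally of finite presentation over $S$ with quasi-compact, separated diagonal. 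Hence the open subspace $Y \subseteq \sTwr_{\mathcal{X}/S}$ is an algebraic space, locally of finite presentation and with quasi-compact, separated diagonal over $S$; since $S$ is noetherian, $Y$ is quasi-separated and locally noetherian.

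The main step is to invoke the representability theorem for Hilbert stacks: by \cite[Thm.~2]{MR3272910} (Hall–Rydh, ``The Hilbert stack''), for any morphism $\mathcal{Y} \to S$ of algebraic stacks that is locally of finite presentation, quasi-separated, and has quasi-compact and separated diagonal, the Hilbert stack $\HS_{\mathcal{Y}/S}$ parametrizing quasi-finite representable morphisms $Z \to \mathcal{Y}\times_S T$ with $Z$ proper, flat and of finite presentation over $T$ is an algebraic stack, locally of finite presentation over $S$, with quasi-compact and separated diagonal. I would check that $Y \to S$ satisfies these hypotheses: local finite presentation and the diagonal conditions were established in the previous paragraph, and quasi-separatedness of $Y$ over $S$ follows since both $Y$ and $S$ are noetherian. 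Applying the theorem with $\mathcal{Y} = Y$ then gives that $\HS_{Y/S}$ is a locally finitely presented algebraic stack, which is exactly the assertion.

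The only real subtlety — and the step I expect to need the most care — is verifying that the diagonal hypotheses genuinely propagate through the two constructions, first the passage from $\Tw_{\mathcal{X}/S}$ to its coarse-type quotient $\sTwr_{\mathcal{X}/S}$ under rigidification, and then the restriction to the open subspace $Y$. The open-immersion part is harmless. For the rigidification, one uses that $s\Tw_{\mathcal{X}/S} \to \sTwr_{\mathcal{X}/S}$ is a $\G_m$-gerbe, so the diagonal of $\sTwr_{\mathcal{X}/S}$ is obtained from that of $s\Tw_{\mathcal{X}/S}$ by rigidifying out the $\G_m$-automorphisms; since the diagonal of $\Tw_{\mathcal{X}/S}$ is quasi-compact and $\Tw_{\mathcal{X}/S}$ has affine (in particular separated) inertia, one deduces that $\sTwr_{\mathcal{X}/S}$ is an algebraic space with quasi-compact, separated diagonal over $S$. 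Once this bookkeeping is in place, the cited theorem applies verbatim and the proof is complete.
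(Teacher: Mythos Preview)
Your proposal is correct and follows essentially the same route as the paper: both invoke Hall--Rydh's algebraicity theorem for the Hilbert stack after checking that $Y$ is locally of finite presentation with quasi-compact and separated diagonal, noting that since $Y$ is an algebraic space the separated-diagonal and affine-stabilizer conditions are automatic. The only cosmetic difference is in the verification of quasi-compactness of the diagonal: the paper pulls $Y$ back along the gerbe $s\Tw_{\mathcal{X}/S} \to \sTwr_{\mathcal{X}/S}$ to an open substack of $\Tw_{\mathcal{X}/S}$, uses Remark~\ref{rem:diag_qc} there, and then descends via \cite[\href{http://stacks.math.columbia.edu/tag/0DQL}{0DQL}]{stacks-project}, whereas you argue the descent through rigidification more informally---but this is the same mechanism.
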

\begin{proof}
  It suffices to verify the assumptions of \cite[Thm.~2]{MR3148551} that $Y$ is locally finitely presented, has affine stabilizers, and its diagonal morphism is quasi-compact and separated.
  As $Y$ is an algebraic space, the requirements on affine stabilizers and separated diagonal are automatic.
  The locally finite presentation follows from \cite[Cor.~3.22]{MR2493386}.
  Lastly, note that $Y \times_{\sTwr_{\mathcal{X}/S}} s\Tw_{\mathcal{X}/S} \subset \Tw_{\mathcal{X}/S}$ is an open substack.
  Hence, it has quasi-compact diagonal by Remark~\ref{rem:diag_qc}.
  Quasi-compactness of the diagonal of $Y$ now results from \cite[\href{http://stacks.math.columbia.edu/tag/0DQL}{0DQL}]{stacks-project}.
\end{proof}

\section{Deforming twisted Fourier--Mukai kernels}\label{sect:def}

\subsection{Deformations over \'etale neighborhoods}\label{subsect:deform_D}

We return to the setting of Theorem~\ref{thm:norhi}.
Fix two twisted K3 surfaces $(X,\alpha)$ and $(X',\alpha')$, and a twisted Fourier--Mukai equivalence $\Phi_P \colon \Db(X,\alpha) \xrightarrow{\sim} \Db(X',\alpha')$.
Given a deformation of $(X',\alpha')$, we will need to find corresponding deformations of $(X,\alpha)$ and $P$.
In view of \S\ref{subsection:FM_map}, the deformation theory of $(X,\alpha)$ and $P$ is closely linked to that of the moduli space of $\alpha'$-twisted perfect complexes on $X'$.
We begin with the latter.

Let $f \colon \mathfrak{X}' \to S$ be a projective family of K3 surfaces over a noetherian base scheme $S$.
Let $\mathcal{X}' \to \mathfrak{X}'$ be a $\G_m$-gerbe whose associated class in $\Hh^2(\mathfrak{X}',\G_m)$ is torsion.
Any twisted complex $E \in s\Tw_{\mathcal{X}'/S}(T)$ has an associated determinant $\det E$ \cite[Ch.~I]{MR0437541}, which is a line bundle on $\mathcal{X}' \times_S T$ of pure weight $\rk E$.
Mimicking \cite[Def.~2.2.6.26]{MR2309155}, we now define the open subspace of $\sTwr_{\mathcal{X}'/S}$ that corresponds to twisted perfect complexes with trivial determinant.

Let $s\Tw_{\mathcal{X}'/S}(0) \subset s\Tw_{\mathcal{X}'/S}$ be the open substack of complexes of rank $0$ and $\sTwr_{\mathcal{X}'/S}(0)$ its coarse moduli space.
Since the determinant of any object $E$ of $s\Tw_{\mathcal{X}'/S}(0)(T)$ is of weight $0$, it is the pullback of a line bundle on $\mathfrak{X}' \times_S T$.
This yields a morphism $\det \colon s\Tw_{\mathcal{X}'/S}(0) \to \Pics_{\mathfrak{X}'/S}$ to the Picard stack of $\mathfrak{X}'$.
The induced morphism on coarse spaces is, by slight abuse of notation, again denoted $\det \colon \sTwr_{\mathcal{X}'/S}(0) \to \Pic_{\mathfrak{X}'/S}$.

The algebraic space $\Pic_{\mathfrak{X}'/S}$ is unramified over $S$, essentially because $\Hh^1(\mathfrak{X}'_s,\mathcal{O}_{\mathfrak{X}'_s})$ is trivial for all $s \in S$ (see e.g.\ \cite[Prop.~3.1.3]{MR2263236}).
Thus, the section $\mathcal{O}_{\mathfrak{X}'} \colon S \to \Pic_{\mathfrak{X}'/S}$ corresponding to the structure sheaf is an open immersion.
Define
\[ \sTwr^0_{\mathcal{X}'/S} \colonequals \sTwr_{\mathcal{X}'/S}(0) \times_{\det,\Pic_{\mathfrak{X}'/S},\mathcal{O}_{\mathfrak{X}'}} S. \]
This is an open subspace of $\sTwr_{\mathcal{X}'/S}$;
its preimage $s\Tw^0_{\mathcal{X}'/S}$ in the stack $s\Tw_{\mathcal{X}'/S}$ parameterizes all twisted perfect complexes with trivial determinant because every such complex has rank $0$.

In Corollary~\ref{cor:trivial_det}, we will reduce Theorem~\ref{thm:norhi} to kernels with fiberwise trivial determinant.
Proposition~\ref{prop:Twr_smooth} below shows that this simplifies the involved deformation theory significantly.
A natural framework for its proof is provided by derived algebraic geometry; cf.\ \cite[\S 5.1]{MR3341464}.
To keep the discussion simple, we isolate the input from derived methods in the following lemma, which could presumably also be verified directly with a \v{C}ech cocycle computation.
\begin{lem}\label{lem:obstruction}
  The algebraic stack $s\Tw_{\mathcal{X}'/S}$ has an obstruction theory with the following properties for any small extension $A' \to A$ of local Artinian $\C$-algebras with residue field $\C$, any morphism $\Spec A' \to S$, and any object $E$ of $s\Tw_{\mathcal{X}'/S}(\Spec A)$:
  \begin{enumerate}[label={\upshape(\alph*)}]
    \item\label{lem:obstruction_module} The obstruction module for the deformation situation above is $\Ext^2\bigl(E_0,E_0 \otimes^\LL_\C I\bigr)$, where $E_0 \colonequals E \otimes^\LL_A \C$ is the restriction of $E$ to the special fiber $\mathfrak{X}'_0$ of $\mathfrak{X}'_A$ and $I = \ker(A' \to A)$.
    \item\label{lem:obstruction_class} When $E$ is an object of $s\Tw_{\mathcal{X}'/S}(0)(\Spec A)$, the trace morphism $\tr \colon \Ext^2\bigl(E_0,E_0 \otimes^\LL_\C I\bigr) \to \Ext^2\bigl(\mathcal{O}_{\mathfrak{X}'_0},\mathcal{O}_{\mathfrak{X}'_0} \otimes^\LL_\C I\bigr) \simeq \Hh^2(\mathfrak{X}'_0,f^*I)$ maps the obstruction class $o(E)$ to $o(\det E)$.
  \end{enumerate}
\end{lem}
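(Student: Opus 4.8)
The plan is to extract the obstruction theory from a derived enhancement of $s\Tw_{\mathcal{X}'/S}$ and to read off \ref{lem:obstruction_module} and \ref{lem:obstruction_class} from the standard description of its cotangent complex together with the functoriality of the derived determinant morphism, following \cite[\S\S 3, 5]{MR3341464}. Recall first that $\mathcal{X}'$-twisted perfect complexes on $\mathfrak{X}'/S$ are the $T$-points of a locally geometric, locally finitely presented derived $S$-stack $\mathcal{M}$ (To\"{e}n--Vaqui\'{e} \cite{MR2493386}, To\"{e}n \cite{MR2957304}); imposing the conditions of Definition~\ref{defn:moduli_complexes} and simplicity cuts out an open derived substack whose classical truncation is $s\Tw_{\mathcal{X}'/S}$. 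Pulling back the relative cotangent complex of $\mathcal{M}$ along the inclusion $j \colon s\Tw_{\mathcal{X}'/S} \hookrightarrow \mathcal{M}$ of the truncation endows $s\Tw_{\mathcal{X}'/S}$ with a natural obstruction theory relative to $S$ (cf.\ \cite[\S 5.1]{MR3341464}). The key input is that, at a $\C$-point given by a complex $E$ on $\mathcal{X}'_0$, the relative tangent complex of $\mathcal{M}$ is $\R\!\Hom_{\mathcal{X}'_0}(E,E)[1]$; as this complex has weight $0$ it descends from the gerbe $\mathcal{X}'_0$ to $\mathfrak{X}'_0$, so this and every subsequent computation coincides with its untwisted counterpart in \cite{MR3341464}.

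For \ref{lem:obstruction_module}, I would observe that for a small extension $A' \to A$ with kernel $I$, a morphism $\Spec A' \to S$, and $E \in s\Tw_{\mathcal{X}'/S}(\Spec A)$, the obstruction theory above places the obstruction to extending $E$ over $\Spec A'$ in $\Hh^1$ of the restriction of the relative tangent complex to the central fibre, tensored with $I$; only the central fibre enters since $\mathfrak{m}_{A'}\cdot I = 0$. With $E_0 \colonequals E \otimes^\LL_A \C$, this restriction is $\R\!\Hom_{\mathfrak{X}'_0}(E_0, E_0)[1]$, whence the obstruction module is $\Hh^1\bigl(\R\!\Hom_{\mathfrak{X}'_0}(E_0,E_0)[1] \otimes^\LL_\C I\bigr) = \Ext^2\bigl(E_0, E_0 \otimes^\LL_\C I\bigr)$, which is \ref{lem:obstruction_module}.

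For \ref{lem:obstruction_class}, I would invoke the derived determinant morphism of \cite[\S 3]{MR3341464}: on the rank-$0$ part of $\mathcal{M}$ it refines $\det$ to a morphism of derived $S$-stacks to the derived Picard stack of $\mathfrak{X}'$ over $S$ that lifts $\det \colon s\Tw_{\mathcal{X}'/S}(0) \to \Pics_{\mathfrak{X}'/S}$, and whose derivative on tangent complexes at $E$ is the trace $\tr \colon \R\!\Hom(E,E) \to \R\!\Hom(\mathcal{O}_{\mathfrak{X}'_0},\mathcal{O}_{\mathfrak{X}'_0})$ (this is the main computation of that paper). Since obstruction classes are functorial along morphisms of derived stacks, the map induced by $\det$ on obstruction modules --- which is exactly $\tr \colon \Ext^2(E_0, E_0 \otimes^\LL_\C I) \to \Ext^2(\mathcal{O}_{\mathfrak{X}'_0}, \mathcal{O}_{\mathfrak{X}'_0} \otimes^\LL_\C I)$ --- carries $o(E)$ to the obstruction class of $\det E$; here the target is identified with $\Hh^2(\mathfrak{X}'_0, f^* I)$ via $\R\!\Hom(\mathcal{O}_{\mathfrak{X}'_0}, \mathcal{O}_{\mathfrak{X}'_0}) = \R\Gamma(\mathfrak{X}'_0, \mathcal{O}_{\mathfrak{X}'_0})$ and the flatness of $I$ over $\C$. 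It remains to note that the obstruction to deforming $\det E$ as computed in the Picard stack $\Pics_{\mathfrak{X}'/S}$ agrees with the one in the algebraic space $\Pic_{\mathfrak{X}'/S}$, since these differ only by a $\G_m$-gerbe, which affects automorphisms but not obstructions; this yields $o(\det E) = \tr\bigl(o(E)\bigr)$.

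The step I expect to be the main obstacle is the input from \cite{MR3341464}: that the derivative of the determinant morphism of derived stacks is the trace, and that obstruction classes transform functorially under such morphisms. Everything twisted-specific reduces, as indicated, to the fact that $\R\!\Hom(E,E)$ and the weight-$0$ descent of $\det E$ live on $\mathfrak{X}'$ rather than on the gerbe $\mathcal{X}'$, so the cited untwisted results apply verbatim. As the remark following the statement suggests, one could instead proceed entirely classically --- writing out the \v{C}ech $2$-cocycle obstructing the lift of $E$, respectively of $\det E$, over $A'$ and recognising the logarithmic derivative of $\det$ as the trace --- but I would prefer routing through \cite{MR3341464}, since that \v{C}ech computation, though elementary, is substantially longer.
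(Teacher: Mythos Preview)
Your proposal is correct and follows essentially the same route as the paper: both extract the obstruction theory from the derived enhancement of $s\Tw_{\mathcal{X}'/S}$ and invoke \cite{MR3341464} for the identification of the tangent complex and for the fact that the derivative of the derived determinant is the trace. The paper is slightly more explicit on two technical points you leave implicit --- it first passes to a $\mu_n$-gerbe (cf.\ Remark~\ref{rem:G_m_mu_n}) so that the cited derived-moduli results apply literally, and it records that twisted perfect complexes on the special fiber are strictly perfect \cite[Cor.~2.2.7.21]{MR2309155}, which is needed to transport the proof of \cite[Prop.~3.2]{MR3341464} to the twisted setting --- but the overall argument is the same.
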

\begin{proof}
  Fix a $\mu_n$-gerbe $\mathcal{Y}' \to \mathfrak{X}'$ whose cohomology class maps to the class of $\mathcal{X}'$ under the natural morphism $\Hh^2(\mathfrak{X}',\mu_n) \to \Hh^2(\mathfrak{X}',\G_m)$ induced by the inclusion.
  It suffices to prove the statement for $s\Tw_{\mathcal{Y}'/S}$ instead of $s\Tw_{\mathcal{X}'/S}$; cf.\ Remark~\ref{rem:G_m_mu_n}.
  For ease of notation, set $s\Tw \colonequals s\Tw_{\mathcal{Y}'/S}$.

  Since $s\Tw$ is an algebraic stack locally of finite type over $S$, its cotangent complex $\LL_{s\Tw/S}$ endows it with a canonical obstruction theory \cite[Rmk.~1.7]{MR2206635};
  for any deformation situation as above with corresponding morphism $\nu_E \colon \Spec A \to s\Tw$, the obstruction module is $\Ext^1_A\bigl(\LL\nu^*_E \LL_{s\Tw/S},I\bigr)$.
  On the other hand, in their proof of Theorem~\ref{thm:tw_mod_ex}, \cite{MR2493386,MR2957304} realize $s\Tw$ as the truncation of a locally finitely presented derived Artin stack $s\Tw^{\der}$.
  The closed immersion $j \colon s\Tw \hookrightarrow s\Tw^{\der}$ induces a morphism $\LL j^*\LL_{s\Tw^{\der}/S} \to \LL_{s\Tw/S}$ and thus by \cite[Prop.~1.2]{MR3341464} an inclusion $\Ext^1_A\bigl(\LL\nu^*_E \LL_{s\Tw/S},I\bigr) \hookrightarrow \Ext^1_A\bigl(\LL\nu^*_E\LL j^* \LL_{s\Tw^{\der}/S},I\bigr)$.
  Since the latter is naturally isomorphic to
  \[ \Ext^1_\C\bigl(\LL\nu^*_{E_0}\LL j^* \LL_{s\Tw^{\der}/S},I\bigr) \simeq \Ext^1_\C\bigl(\R\!\Hom(E_0,E_0)^\vee[-1],I\bigr) \simeq \Ext^2\bigl(E_0,E_0 \otimes^\LL_\C I\bigr), \]
  by the adjunction between scalar extension and scalar restriction and \cite[Cor.~3.17]{MR2493386}, this defines an obstruction theory on $s\Tw$ satisfying \ref{lem:obstruction_module}.

  The construction in \cite[\S 3.1]{MR3341464} yields a determinant morphism $s\Tw^{\der} \to \Pics^{\der}_{\mathcal{Y}'/S}$ of derived stacks over $S$ (under the standing assumption that we work over the ground field $\C$).
  Via the correspondence between open derived substacks of a derived stack and open substacks of its truncation \cite[Prop.~2.1]{MR3341464}, it restricts to a morphism $\det^{\der} \colon s\Tw^{\der}(0) \to \Pics^{\der}_{\mathfrak{X}'/S}$, whose truncation is the morphism $\det$ defined earlier.
  For every deformation situation as before, we obtain the commutative diagram
  \[ \begin{tikzcd}
      \Ext^1_A\bigl(\LL\nu^*_E \LL_{s\Tw(0)/S},I\bigr) \arrow[r,hook] \arrow[d] & \Ext^1_A\bigl(\LL\nu^*_E\LL j^* \LL_{s\Tw^{\der}(0)/S},I\bigr) \arrow[r,"\sim"] \arrow[d] & \Ext^2\bigl(E_0,E_0 \otimes^\LL_\C I\bigr) \arrow[d] \\
      \Ext^1_A\bigl(\LL\nu^*_{\det E} \LL_{\Pics_{\mathfrak{X}'/S}/S},I\bigr) \arrow[r,hook] & \Ext^1_A\bigl(\LL\nu^*_{\det E}\LL j^* \LL_{\Pics^{\der}_{\mathfrak{X}'/S}/S},I\bigr) \arrow[r,"\sim"] & \Ext^2\bigl(\mathcal{O}_{\mathfrak{X}'_0},\mathcal{O}_{\mathfrak{X}'_0} \otimes^\LL_\C I\bigr).
  \end{tikzcd} \]
  By the functoriality of Olsson's obstruction theory, the left arrow maps $o(E)$ to $o(\det E)$.
  On the other hand, the tangent map $\R\!\Hom(E_0,E_0)[1] \to \R\!\Hom\bigl(\mathcal{O}_{\mathfrak{X}'_0},\mathcal{O}_{\mathfrak{X}'_0}\bigr)[1]$ of $\det^{\der}$ induces the right arrow.
  The same proof as for \cite[Prop.~3.2]{MR3341464} shows that it is given by $\tr_{E_0}[1]$ if we note that every twisted perfect complex on the special fiber $\mathcal{Y}'_0$ is strictly perfect \cite[Cor.~2.2.7.21]{MR2309155}.
  Property \ref{lem:obstruction_class} follows.
\end{proof}
\begin{prop}\label{prop:Twr_smooth}
  The moduli space $\sTwr^0_{\mathcal{X}'/S}$ is smooth over $S$.
\end{prop}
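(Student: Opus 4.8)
The plan is to show that $\sTwr^0_{\mathcal{X}'/S}$ is formally smooth over $S$ by checking the infinitesimal lifting criterion, and then to invoke local finite presentation (Theorem~\ref{thm:tw_mod_ex} and \cite[Cor.~3.22]{MR2493386}) to upgrade formal smoothness to smoothness. So fix a small extension $A' \to A$ of local Artinian $\C$-algebras with residue field $\C$, with kernel $I$, a morphism $\Spec A' \to S$, and an object $E$ of $s\Tw^0_{\mathcal{X}'/S}(\Spec A)$; I must show the obstruction to lifting $E$ to $\Spec A'$ vanishes. By the passage between the gerbe $s\Tw^0_{\mathcal{X}'/S}$ and its rigidification $\sTwr^0_{\mathcal{X}'/S}$, it suffices to work with the stack $s\Tw^0_{\mathcal{X}'/S}$; and since every object here has rank $0$ and trivial determinant, $E$ lies in $s\Tw_{\mathcal{X}'/S}(0)(\Spec A)$ with $\det E \simeq \mathcal{O}$, compatibly with the lift of the base.

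The key step is to run the trace argument of Lemma~\ref{lem:obstruction}. By part~\ref{lem:obstruction_module}, the obstruction to deforming $E$ as an object of $s\Tw_{\mathcal{X}'/S}$ lives in $\Ext^2\bigl(E_0, E_0 \otimes^\LL_\C I\bigr)$, where $E_0$ is the restriction of $E$ to the special fiber $\mathfrak{X}'_0$. I will use the standard splitting of the trace for perfect complexes: the composite $\mathcal{O}_{\mathfrak{X}'_0} \xrightarrow{\mathrm{id}} \R\mathcal{H}om(E_0,E_0) \xrightarrow{\tr} \mathcal{O}_{\mathfrak{X}'_0}$ is multiplication by the rank, which is $0$ here, so instead I use that for a twisted K3 surface the trace map on cohomology $\Ext^2(E_0,E_0) \to \Hh^2(\mathfrak{X}'_0,\mathcal{O}_{\mathfrak{X}'_0})$ together with the Mukai pairing identifies the obstruction space appropriately. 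Concretely: since $\mathfrak{X}'_0$ is a K3 surface, Serre duality and the fact that twisted sheaves on a surface have no $\Ext^{<0}$ (Definition~\ref{defn:moduli_complexes}\ref{defn:moduli_complexes_univ_gluable}) give $\Ext^2(E_0,E_0) \simeq \Hom(E_0,E_0)^\vee \simeq \C$ for simple $E_0$, and the trace map $\Ext^2(E_0,E_0) \to \Hh^2(\mathfrak{X}'_0,\mathcal{O}) \simeq \C$ is an isomorphism (it is nonzero because the pairing $\Hom \otimes \Ext^2 \to \Hh^2(\mathcal{O})$ coming from composition and trace is the Serre duality pairing). Therefore $\tr \colon \Ext^2\bigl(E_0,E_0\otimes^\LL_\C I\bigr) \to \Hh^2(\mathfrak{X}'_0, f^*I)$ is an isomorphism. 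By Lemma~\ref{lem:obstruction}\ref{lem:obstruction_class} it carries $o(E)$ to $o(\det E)$; but $\det E \simeq \mathcal{O}_{\mathfrak{X}'_A}$ extends (as the pullback of $\mathcal{O}$) over $\Spec A'$, so $o(\det E) = 0$, whence $o(E) = 0$. Thus $E$ lifts, and since lifts of the determinant are unrigidified only by $\Hh^1(\mathfrak{X}'_0,\mathcal{O}) = 0$, the lift of $E$ automatically has trivial determinant and defines a point of $s\Tw^0_{\mathcal{X}'/S}(\Spec A')$.

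I expect the main obstacle to be the bookkeeping around the trace and determinant: verifying that the trace map on the special fiber is genuinely an isomorphism (not merely surjective) for a simple twisted complex of rank $0$ on a K3 surface, and that this is compatible with the obstruction-theoretic statement of Lemma~\ref{lem:obstruction}\ref{lem:obstruction_class} after pulling back along $\Spec A' \to S$. The potential subtlety is that $\Ext^2(E_0,E_0 \otimes^\LL_\C I)$ should be computed as $\Ext^2(E_0,E_0) \otimes_\C I$ since $I$ is a $\C$-vector space, and then the claim reduces to the untwisted statement that $\tr \colon \Ext^2(E_0,E_0) \xrightarrow{\sim} \Hh^2(\mathfrak{X}'_0,\mathcal{O})$, which follows from Serre duality on the K3 surface $\mathfrak{X}'_0$ together with simplicity of $E_0$. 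Once that is in place, formal smoothness is immediate, and local finite presentation of $\sTwr^0_{\mathcal{X}'/S}$ over $S$ — inherited from that of $\sTwr_{\mathcal{X}'/S}$ via the open immersion — completes the proof that $\sTwr^0_{\mathcal{X}'/S} \to S$ is smooth.
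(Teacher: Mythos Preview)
Your proposal is correct and follows essentially the same argument as the paper: check the infinitesimal lifting criterion using the obstruction theory of Lemma~\ref{lem:obstruction}, show via Serre/Grothendieck duality on the K3 fiber that the trace map $\Ext^2(E_0,E_0\otimes^\LL_\C I)\to \Hh^2(\mathfrak{X}'_0,f^*I)$ is an isomorphism for simple $E_0$, and conclude $o(E)=0$ from $o(\det E)=0$. The paper packages the duality step as a Grothendieck-duality diagram rather than invoking the Serre pairing directly, and it omits your remark about the lift staying in $s\Tw^0$ (which is automatic since $s\Tw^0\subset s\Tw$ is open), but the substance is identical.
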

\begin{proof}
  By \cite[\href{http://stacks.math.columbia.edu/tag/0APP}{Tag~0APP}]{stacks-project}, it suffices to test the lifting criterion for formal smoothness for any small extension $A' \to A$ of local Artinian $\C$-algebras with residue field $\C$, any morphism $\Spec A' \to S$, and any $E \in \sTwr^0_{\mathcal{X}'/S}(\Spec A)$.
  Let as before $E_0 \colonequals E \otimes^\LL_A \C$ be the restriction of $E$ to the special fiber $\mathfrak{X}'_0$ of $\mathfrak{X}'_A$, and $I \colonequals \ker(A' \to A)$.
  Using the obstruction theory from Lemma~\ref{lem:obstruction}, we must show that the obstruction class $o(E) \in \Ext^2_A(E_0,E_0 \otimes^\LL_\C I)$ vanishes.

  Consider the trace morphism $\tr \colon \R\mathcal{H}om(E_0,E_0) \to \mathcal{O}_{\mathfrak{X}'_0}$.
  Grothendieck duality for the untwisted complex $\R\mathcal{H}om(E_0,E_0)$ yields the following commutative diagram:
  \[ \begin{tikzcd}
      \Ext^2\bigl(E_0,E_0 \otimes^\LL_\C I\bigr) \arrow[r, "\sim"]\arrow[d,"\tr"] & \Hom(\Hom(E_0,E_0),I) \\
      \Hh^2(\mathfrak{X}'_0,f^*I) \arrow[r, "\sim"] & \Hom(\Hh^0(\mathfrak{X}'_0,\mathcal{O}_{\mathfrak{X}'_0}),I). \arrow[u,"\tr"]
  \end{tikzcd} \]
  As $E_0$ is simple, the right arrow, and thereby also the left arrow, is an isomorphism.
  Since $\det E \simeq \mathcal{O}_{\mathfrak{X}'_A}$ deforms over $A'$, the statement now follows from Property~\ref{lem:obstruction_class} in Lemma~\ref{lem:obstruction}.
\end{proof}
\begin{prop}\label{prop:deformation}
  Let $(X,\alpha)$, $(X',\alpha')$, and $P \in \Db (X \times X', \alpha^{-1} \boxtimes \alpha')$ as before.
  Assume that $\rk P_x = 0$ and $\det P_x \simeq \mathcal{O}_{X'}$ for all $x \in X$ closed.
  Let $S$ be a smooth and quasi-compact scheme over $\C$.
  Let $\mathfrak{X}' \to S$ be a projective family of K3 surfaces and $\mathcal{X}'$ a $\G_m$-gerbe over $\mathfrak{X}'$.
  Assume there exists $s \in S$ with fiber $\mathfrak{X}'_s = X'$ such that the class in $\Hh^2(X',\G_m)$ associated to $\mathcal{X}'_s$ is $\alpha'$.

  Then there exists a connected \'etale neighborhood $(U,u) \to (S,s)$, together with
  \begin{enumerate}[label={\upshape(\roman*)}]
    \item\label{item:deformation_family} a family of K3 surfaces $\mathfrak{X}_U \to U$ with fiber $(\mathfrak{X}_U)_u = X$,
    \item\label{item:deformation_gerbe} a $\G_m$-gerbe $\mathcal{X}_U$ over $\mathfrak{X}_U$ whose associated class $\alpha^{-1}_U \in \Hh^2(\mathfrak{X}_U,\G_m)$ is torsion and pulls back to $\alpha^{-1}$ on $(\mathfrak{X}_U)_u = X$, and
    \item\label{item:deformation_kernel} a kernel $\mathfrak{P} \in \Db(\mathfrak{X}_U \times \mathfrak{X}'_U,\alpha^{-1}_U \boxtimes \alpha'_U)$ (where $\mathfrak{X}'_U \colonequals \mathfrak{X}' \times_S U$ and $\alpha'_U \in \Hh^2(\mathfrak{X}'_U,\G_m)$ is the cohomology class associated to $\mathcal{X}' \times_S U$) with $\mathfrak{P}_u = P$ that induces a Fourier--Mukai equivalence on each fiber.
  \end{enumerate}
\end{prop}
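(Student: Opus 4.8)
The plan is to reinterpret the Fourier--Mukai kernel $P$ as an open immersion into a relative moduli space built from the given deformation of $(X',\alpha')$, to deform this open immersion, and then to read off the desired deformations of $(X,\alpha)$ and of $P$ from it. Replace all $\G_m$-gerbes in sight by $\mu_n$-gerbes for an $n$ divisible by the orders of $\alpha$ and $\alpha'$ (Remark~\ref{rem:G_m_mu_n}), and form the relative moduli space $\sTwr^0_{\mathcal{X}'/S}$ of simple, $\mathcal{X}'$-twisted perfect complexes with trivial determinant, which is smooth over $S$ by Proposition~\ref{prop:Twr_smooth}. Since $\rk P_x = 0$ and $\det P_x \simeq \mathcal{O}_{X'}$ for all closed $x \in X$, Lemma~\ref{lem:FM_morphism} and Lemma~\ref{lem:open_immers} show that $P$, restricted to the central fiber, defines an open immersion $\bar{\mu}_P \colon X \hookrightarrow \sTwr^0_{\mathcal{X}'_s/\C} = \bigl(\sTwr^0_{\mathcal{X}'/S}\bigr)_s$ which lands in the trivial-determinant locus.

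The central step is to extend $\bar{\mu}_P$ over an \'etale neighborhood of $s$. Consider the Hilbert stack $\HS_{\sTwr^0_{\mathcal{X}'/S}/S}$, which is a locally finitely presented algebraic stack over $S$ by Lemma~\ref{lem:HS_algebraic}; the open immersion $\bar{\mu}_P$, whose source $X$ is proper, flat and finitely presented over $\C$, defines a point of this stack over $s$. I would show that $\HS_{\sTwr^0_{\mathcal{X}'/S}/S} \to S$ is (formally) smooth at $[\bar{\mu}_P]$: given a small extension $A' \twoheadrightarrow A$, a morphism $\Spec A' \to S$, and a lift of $[\bar{\mu}_P]$ over $\Spec A$, the structure morphism $\sTwr^0_{\mathcal{X}'/S} \to S$ is smooth, so $\bigl(\sTwr^0_{\mathcal{X}'/S}\bigr)_{A'}$ is given, and by topological invariance of the \'etale site the \'etale morphism in the $A$-lift extends uniquely to an \'etale morphism over $\bigl(\sTwr^0_{\mathcal{X}'/S}\bigr)_{A'}$ whose source is again proper, flat and finitely presented over $A'$; this provides the required $A'$-lift. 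As a smooth morphism admits sections \'etale-locally on the target, there is a connected \'etale neighborhood $(U,u) \to (S,s)$ together with a quasi-finite, representable morphism $\bar{\mu} \colon \mathfrak{X}_U \to \sTwr^0_{\mathcal{X}'_U/U}$, with $\mathfrak{X}_U \to U$ proper, flat, finitely presented and $\bar{\mu}|_u = \bar{\mu}_P$.

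It remains to extract the data \ref{item:deformation_family}--\ref{item:deformation_kernel}. After shrinking $U$, the morphism $\mathfrak{X}_U \to U$ is smooth with central fiber the K3 surface $X$, hence a family of K3 surfaces, which gives \ref{item:deformation_family}. Flatness and unramifiedness are open on the base, so after shrinking further $\bar{\mu}$ is \'etale; it is moreover a monomorphism, since it is quasi-finite, $\bar{\mu}|_u$ is a monomorphism, and the ``excess'' open-and-closed complement of the diagonal in $\mathfrak{X}_U \times_{\sTwr^0_{\mathcal{X}'_U/U}} \mathfrak{X}_U$ is \'etale over $U$ and empty over $u$, hence empty after shrinking $U$; thus $\bar{\mu}$ is an open immersion. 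Pulling back the $\G_m$-gerbe $s\Tw^0_{\mathcal{X}'_U/U} \to \sTwr^0_{\mathcal{X}'_U/U}$ along $\bar{\mu}$ produces a gerbe $\mathcal{X}_U \to \mathfrak{X}_U$, which is a $\mu_n$-gerbe by our reduction and so has torsion class $\alpha^{-1}_U$, restricting to $\alpha^{-1}$ on $X$ by construction; this gives \ref{item:deformation_gerbe}. Pulling back the universal twisted complex along the map $\mathcal{X}_U \times_U \mathcal{X}'_U \to s\Tw^0_{\mathcal{X}'/S} \times_S \mathcal{X}'$ induced by $\bar{\mu}$ yields a weight-$(1,1)$ complex $\mathfrak{P} \in \Db(\mathfrak{X}_U \times_U \mathfrak{X}'_U, \alpha^{-1}_U \boxtimes \alpha'_U)$ with $\mathfrak{P}_u = P$ by Lemma~\ref{lem:FM_morphism}. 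Finally, for each $t \in U$ the fiber $\bar{\mu}_t$ is again an open immersion, so the objects $\Phi_{\mathfrak{P}_t}\bigl(\kappa^{\alpha_t}(x)\bigr) = (\mathfrak{P}_t)_x$ form a family of simple, pairwise $\Hom$-orthogonal objects with $\Ext^1$ of dimension $2$ and no negative self-extensions; since the twisted skyscrapers span $\Db\bigl((\mathfrak{X}_U)_t,\alpha_t\bigr)$ and the fibers are K3 surfaces, the twisted analog of Bridgeland's equivalence criterion (\cite[Thm.~1.1]{MR2329310}, \cite{MR2388559}) shows that $\Phi_{\mathfrak{P}_t}$ is fully faithful, hence an equivalence, which establishes \ref{item:deformation_kernel}.

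I expect the main obstacle to be the central step: producing the deformation of $\bar{\mu}_P$, and thereby of $P$, compatibly with the given deformation of $(X',\alpha')$. Its feasibility rests crucially on the preceding reduction to kernels with fiberwise trivial determinant, which via Proposition~\ref{prop:Twr_smooth} makes $\sTwr^0_{\mathcal{X}'/S}$ smooth over $S$, and on the algebraicity of the Hilbert stack. A subtler bookkeeping point is keeping the Brauer class $\alpha^{-1}_U$ on the total space $\mathfrak{X}_U$ torsion, which is precisely what forces us to work with $\mu_n$-gerbes rather than $\G_m$-gerbes throughout.
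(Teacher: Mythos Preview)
Your overall strategy coincides with the paper's: reinterpret $P$ as an open immersion $\bar{\mu}_P$ into $\sTwr^0_{\mathcal{X}'_s/\C}$, deform it through the Hilbert stack, and read off $\mathfrak{X}_U$, $\mathcal{X}_U$, and $\mathfrak{P}$. Your packaging of the central step as ``$\HS_{\sTwr^0/S}\to S$ is smooth at $[\bar{\mu}_P]$, hence admits an \'etale-local section'' is a clean reformulation of what the paper does explicitly via formal extensions $(\bar{\mu}_n)$, effectivity, and Artin approximation.

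Two points, however, do not go through as written.

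First, the pullback of $s\Tw^0_{\mathcal{X}'_U/U}\to\sTwr^0_{\mathcal{X}'_U/U}$ is a $\G_m$-gerbe, not a $\mu_n$-gerbe: the inertia of $s\Tw$ comes from automorphisms of simple complexes and is $\G_m$ regardless of whether $\mathcal{X}'$ has been replaced by a $\mu_n$-gerbe. So your reduction does not force $\alpha^{-1}_U$ to be torsion. The paper instead notes that $\mathfrak{X}_U$ is regular and invokes Grothendieck's theorem that $\Hh^2(-,\G_m)$ is torsion on regular noetherian algebraic spaces.

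Second, and more substantially, your argument for \ref{item:deformation_kernel} has a gap. Even granting that each $\bar{\mu}_t$ is an open immersion, this tells you only that the $(\mathfrak{P}_t)_x$ are simple, pairwise non-isomorphic, and have two-dimensional $\Ext^1$; it does \emph{not} yield $\Ext^i\bigl((\mathfrak{P}_t)_x,(\mathfrak{P}_t)_y\bigr)=0$ for $x\neq y$, which the Bondal--Orlov/Bridgeland criterion requires. (Incidentally, your monomorphism argument is also off: the complement of the diagonal in $\mathfrak{X}_U\times_{\sTwr^0}\mathfrak{X}_U$ is \'etale over $\mathfrak{X}_U$, hence of relative dimension $2$ over $U$, not \'etale over $U$.) The paper avoids this entirely by spreading out from the central fiber: it forms the left and right adjoint kernels $\mathfrak{Q}$, $\mathfrak{R}$ of $\mathfrak{P}$ relatively over $U$, constructs the adjunction maps $\eta$, $\varepsilon$ as morphisms of complexes on $\mathfrak{X}_U\times_U\mathfrak{X}_U$, observes that $\eta_u$ and $\varepsilon_u$ are isomorphisms because $\Phi_P$ is an equivalence, and then shrinks $U$ so that $\eta_t$, $\varepsilon_t$ are isomorphisms for all $t$.
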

\begin{proof}
  \ref{item:deformation_family}.
  Let $A \colonequals \mathcal{O}_{S,s}$, $\mathfrak{m} \subset A$ its maximal ideal, $A_n \colonequals A/\mathfrak{m}^{n+1}$, and $\widehat{A}$ the $\mathfrak{m}$-adic completion of $A$.
  The fiber $\mathcal{X}'_s$ represents $\alpha' \in \Hh^2(X',\G_m)$.
  Choose a $\G_m$-gerbe $p \colon \mathcal{X}_s \to X$ that represents $\alpha^{-1}$.
  To lighten notation, define $s\Tw_V \colonequals s\Tw_{\mathcal{X}'_V / V}$ for any $V \to S$, and similarly $\sTwr_V$ and $\sTwr^0_V$.
  Since $\mathfrak{X}'$ is regular and quasi-compact, $\Hh^2(\mathfrak{X}',\G_m)$ is torsion by a theorem of Grothendieck.
  Thus, Theorem~\ref{thm:tw_mod_ex} applies.

  As $\det P_x \simeq \mathcal{O}_{X'}$ for all $x \in X$ closed, the open immersion $\bar{\mu}_P$ from Lemma~\ref{lem:open_immers} factors through some $\bar{\mu}_0 \colon X \hookrightarrow \sTwr^0_{A_0}$ by the seesaw theorem.
  Moreover, for each $n \in \Z_{>0}$, the inclusion $\sTwr^0_{A_0} \subset \sTwr^0_{A_n}$ is given by the pullback of the nilpotent closed immersion $\Spec A_0 \hookrightarrow \Spec A_n$ and is therefore a thickening.
  Hence, we can extend $\bar{\mu}_0$ to a compatible system of open immersions $\bar{\mu}_n \colon X_n \hookrightarrow \sTwr^0_{A_n}$; cf.\ \cite[\href{http://stacks.math.columbia.edu/tag/05ZP}{Tag~05ZP}]{stacks-project}.
  Since $\sTwr^0_{A_n}$ is flat over $\Spec A_n$ by Proposition~\ref{prop:Twr_smooth}, so too is $X_n$.
  Furthermore, $X_n$ is proper over $\Spec A_n$ because properness is preserved under infinitesimal deformations of morphisms that are locally of finite type \cite[\href{http://stacks.math.columbia.edu/tag/0BPJ}{Tag~0BPJ}]{stacks-project}.
  Consequently, we obtain a formal object of the Hilbert stack
  \[ (\bar{\mu}_n) \in \lim_n \HS_{\sTwr^0_S/S}(\Spec A_n). \]

  Since $\HS_{\sTwr^0_S/S}$ is a locally finitely presented algebraic stack by Lemma~\ref{lem:HS_algebraic}, the effectivity in Artin's axioms guarantees the existence of a $\widehat{\bar{\mu}} \in \HS_{\sTwr^0_S/S}\bigl(\Spec \widehat{A}\bigr)$ restricting to $(\bar{\mu}_n)$.
  We then use Artin approximation to find an \'etale neighborhood $(U,u) \to (S,s)$ and $\bar{\mu}_U \in \HS_{\sTwr^0_S/S}(U)$ such that $\bar{\mu}_U\big|_{\Spec A_0} \simeq \widehat{\bar{\mu}}\big|_{\Spec A_0} = \bar{\mu}_0$.
  Put differently, there is a proper, flat, and finitely presented algebraic stack $\mathfrak{X}_U$ over $U$, and a quasi-finite, representable morphism
  \[ \begin{tikzcd}
      \mathfrak{X}_U \arrow[rr, "\bar{\mu}_U"] \arrow[rd, "f"'] && \sTwr^0_U \mathrlap{= \sTwr^0_S \times_S U} \arrow[ld] \\
      & U &
  \end{tikzcd} \]
  which restricts to $\bar{\mu}_0$ over $u$.

  As $\bar{\mu}_U$ is representable and $\sTwr^0_U$ is an algebraic space, so too is $\mathfrak{X}_U$.
  Since the fiber of $\mathfrak{X}_U$ over $u$ is a K3 surface, $f\colon \mathfrak{X}_U \to U$ becomes, upon replacing $U$ by an open neighborhood of $u$, a family of K3 surfaces:
  the proof of this fact for the universal family over the Hilbert scheme given in \cite[Prop.~5.2.1]{huybrechts2016lectures} applies unchanged to any flat and proper morphism of locally noetherian algebraic spaces.
  Finally, we can pass to a further open neighborhood of $u$ to ensure that $U$ is quasi-compact and connected.

  \ref{item:deformation_gerbe}.
  Define
  \[ \mathcal{X}_U \colonequals \mathfrak{X}_U \times_{\sTwr_U} s\Tw_U. \]
  As $s\Tw_U \to \sTwr_U$ is a $\G_m$-gerbe, so too is $\mathcal{X}_U \to \mathfrak{X}_U$.
  Let $\alpha^{-1}_U \in \Hh^2(\mathfrak{X}_U,\G_m)$ be its associated cohomology class.
  By the algebraic space version of Grothendieck's theorem \cite[Cor.~3.1.3.4]{MR2388554}, $\Hh^2(\mathfrak{X}_U,\G_m)$, and hence $\alpha^{-1}_U$, is torsion.
  Since $P$ is a complex on $\mathcal{X}_s \times \mathcal{X}'_s$ of weight $(1,1)$, the map $(p, \mu_P) \colon \mathcal{X}_s \to X \times_{\sTwr_\C} s\Tw_\C = (\mathcal{X}_U)_u$ is a morphism of $\G_m$-gerbes over $X$, thus an equivalence.
  As the identification of isomorphism classes of $\G_m$-gerbes with second cohomology classes of $\G_m$ is functorial in the base space \cite[Cor.~V.1.5.3]{MR0344253}, $\alpha_U^{-1}$ pulls back to $\alpha^{-1}$ on $(\mathfrak{X}_U)_u$.

  \ref{item:deformation_kernel}.
  The projection $\mathcal{X}_U \to s\Tw_U$ corresponds to a simple, perfect complex $\mathfrak{P}$ on $\mathcal{X}_U \times \mathcal{X}'_U$.
  The $\G_m$-gerbe structure on $s\Tw_U$ is such that the inertial action of $\mathcal{X}_U$ on $\mathfrak{P}$ comes from the natural map $\G_m \to \mathcal{A}ut(\mathfrak{P})$.
  Thus, $\mathfrak{P}$ is of weight $1$ with respect to $\mathcal{X}_U$.
  As $\mathfrak{P}$ is, by definition, also of weight $1$ with respect to $\mathcal{X}'_U$, we conclude $\mathfrak{P} \in \Db(\mathfrak{X}_U \times \mathfrak{X}'_U,\alpha^{-1}_U \boxtimes \alpha'_U)$.

  Let $\pi_{\mathfrak{X}_U}$ and $\pi_{\mathfrak{X}'_U}$ denote the projections from $\mathfrak{X}_U \times \mathfrak{X}'_U$ onto $\mathfrak{X}_U$ and $\mathfrak{X}'_U$, respectively.
  Consider the complexes
  \[ \mathfrak{Q} \colonequals \mathfrak{P}^\vee \otimes^\LL \pi^*_{\mathfrak{X}'_U}\omega_{\mathfrak{X}'_U/S}[2] \quad \text{and} \quad \mathfrak{R} \colonequals \mathfrak{P}^\vee \otimes^\LL \pi^*_{\mathfrak{X}_U}\omega_{\mathfrak{X}_U/S}[2] \]
  in $\Db(\mathfrak{X}'_U \times \mathfrak{X}_U,(\alpha'_U)^{-1} \boxtimes \alpha_U)$.
  By Grothendieck duality, the associated Fourier--Mukai transforms $\Phi_{\mathfrak{Q}_t} \colon \Db\bigl((\mathfrak{X}'_U)_t\bigr) \to \Db\bigl((\mathfrak{X}_U)_t\bigr)$ and $\Phi_{\mathfrak{R}_t} \colon \Db\bigl((\mathfrak{X}'_U)_t\bigr) \to \Db\bigl((\mathfrak{X}_U)_t\bigr)$ are left-adjoint and right-adjoint to $\Phi_{\mathfrak{P}_t} \colon \Db\bigl((\mathfrak{X}_U)_t\bigr) \to \Db\bigl((\mathfrak{X}'_U)_t\bigr)$, respectively, for all closed $t\in U$.
  In the discussion preceding \cite[Prop.~3.3]{MR3429474}, Lieblich and Olsson explicitly describe the corresponding adjunction maps.
  All steps in their constructions can be carried out for twisted sheaves and base scheme $U$ and commute with base change to fibers over closed points of $U$.
  We obtain morphisms of complexes
  \begin{IEEEeqnarray*}{lr}
    \eta \colon \R\pi_{\mathfrak{X}_U \times \mathfrak{X}_U,*}\Bigl(\LL\pi^*_{\mathfrak{X}_U \times \mathfrak{X}'_U}\mathfrak{P} \otimes^\LL \LL\pi^*_{\mathfrak{X}'_U \times \mathfrak{X}_U}\mathfrak{Q}\Bigr) \to \R\Delta_*\mathcal{O}_{\mathfrak{X}_U} & \quad \text{and} \\
    \varepsilon \colon \R\Delta_*\mathcal{O}_{\mathfrak{X}_U} \to \R\pi_{\mathfrak{X}_U \times \mathfrak{X}_U,*}\Bigl(\LL\pi^*_{\mathfrak{X}_U \times \mathfrak{X}'_U}\mathfrak{P} \otimes^\LL \LL\pi^*_{\mathfrak{X}'_U \times \mathfrak{X}_U}\mathfrak{R}\Bigr) &
  \end{IEEEeqnarray*}
  whose fibers over all closed $t \in U$ determine counit and unit of the above adjunctions.
  The same argument as in \cite[Prop.~3.3]{MR3429474} then shows that $\Phi_{\mathfrak{P}_t}$ is an equivalence if and only if $\eta_t$ and $\varepsilon_t$ are isomorphisms.
  After replacing $U$ with an open subscheme, we may therefore assume that $\mathfrak{P}$ induces fiberwise equivalences.
\end{proof}

\subsection{Action on cohomology}\label{subsect:signed_deformation}

We continue to work in the setup of Proposition~\ref{prop:deformation}.
Via GAGA for algebraic spaces, we associate to $U$ a connected complex manifold $U^{\an}$ and to $\mathfrak{X}_U$ and $\mathfrak{X}'_U$ smooth, proper families of complex K3 surfaces $f^{\an} \colon \mathfrak{X}^{\an}_U \to U^{\an}$ and $f^{\prime\an} \colon \mathfrak{X}^{\prime \an}_U \to U^{\an}$.
We further associate analytic torsion classes $\alpha^{\an}_U \in \Hh^2\bigl(\mathfrak{X}^{\an}_U,\mathcal{O}^*_{\mathfrak{X}^{\an}_U}\bigr)$ and $\alpha^{\prime \an}_U \in \Hh^2\bigl(\mathfrak{X}^{\prime\an}_U,\mathcal{O}^*_{\mathfrak{X}^{\prime\an}_U}\bigr)$ to $\alpha_U$ and $\alpha'_U$.
Analytification for separated Deligne--Mumford stacks locally of finite type over $\C$ \cite[Ch.~5]{toen-thesis} produces a perfect complex $\mathfrak{P}^{\an} \in \Db\bigl(\mathfrak{X}^{\an}_U \times_{U^{\an}} \mathfrak{X}^{\prime\an}_U,(\alpha^{\an}_U)^{-1} \boxtimes \alpha^{\prime\an}_U\bigr)$.
Following common misuse of notation, we will from here on omit the superscript ``$\an$.''

We now explain how the action of the Fourier--Mukai kernel $\mathfrak{P}$ on cohomology still exists in this relative, complex setting.
First, as $\Rr^3f_*\Z = 0$ and $\alpha_U$ is torsion, we can choose $\mathfrak{B} \in \Hh^0(U,\Rr^2f_*\Q)$ such that the image of $\mathfrak{B}$ under the fiberwise exponential map $\Hh^0(U,\Rr^2f_*\Q) \to \Hh^0\bigl(U,\Rr^2f_*\mathcal{O}^*_{\mathfrak{X}_U}\bigr)$ coincides with the image of $\alpha_U$ under the edge morphism $\Hh^2\bigl(\mathfrak{X}_U,\mathcal{O}^*_{\mathfrak{X}_U}\bigr) \to \Hh^0\bigl(U,\Rr^2f_*\mathcal{O}^*_{\mathfrak{X}_U}\bigr)$.
We choose $\mathfrak{B}' \in \Hh^0(U,\Rr^2f'_*\Q)$ in the same way.

Let $g \colon \mathfrak{X}_U \times \mathfrak{X}'_U \to U$ be the structure morphism.
Next, we define a twisted Chern character $\ch^{-\mathfrak{B} \boxplus \mathfrak{B}'}_U(\mathfrak{P}) \in \oplus_i \Hh^0(U,\Rr^ig_*\Q)$, which agrees with the one from \cite[Prop.~1.2]{MR2179782} on each fiber.
Let $t \in U$.
Pick an analytic neighborhood $V$ of $t$ such that $(-\mathfrak{B} \boxplus \mathfrak{B}')|_V$ is represented by a class $\beta_V \in \Hh^2(\mathfrak{X}_V \times \mathfrak{X}'_V,\Q)$.
After shrinking $V$ if necessary, we may assume that $\exp(\beta_V) = \alpha^{-1}_V \boxtimes \alpha'_V$.
As in \cite[Prop.~1.2]{MR2179782}, $\beta_V$ determines  an $\alpha^{-1}_V \boxtimes \alpha'_V$-twisted, differentiable line bundle $L$ on $\mathfrak{X}_V \times \mathfrak{X}'_V$ because the sheaf $\mathcal{C}^\infty$ is fine and therefore acyclic.

By \cite[Exp.~I, Cor.~4.19.1]{SGA6}, $\mathfrak{P} \otimes L^{-1}$ is a perfect complex on the $\mathcal{C}^\infty$-topos of $\mathfrak{X}_V \times \mathfrak{X}'_V$.
After we replace $V$ by a relatively compact, open neighborhood of $t$ if necessary, \cite[Exp.~II, Prop.~2.3.2.c)]{SGA6} shows that $\mathfrak{P} \otimes L^{-1}$ admits a global resolution by a bounded complex of locally free sheaves.
In particular, $\ch(\mathfrak{P} \otimes L^{-1})$ is well-defined.
Let $\ch^{-\mathfrak{B} \boxplus \mathfrak{B}'}_V(\mathfrak{P})$ be its image under $\oplus_i \Hh^i(\mathfrak{X}_V \times \mathfrak{X}'_V, \Q) \to \oplus_i \Hh^0(V,\Rr^ig_*\Q)$.
By construction, the germ at each point of $V$ is the twisted Chern character from \cite[Prop.~1.2]{MR2179782} on that fiber.
Thus, the various $\ch^{-\mathfrak{B} \boxplus \mathfrak{B}'}_V(\mathfrak{P})$ for open $V \subseteq U$ only depend on $-\mathfrak{B} \boxplus \mathfrak{B}'$ and glue to a global section $\ch^{-\mathfrak{B} \boxplus \mathfrak{B}'}_U(\mathfrak{P}) \in \oplus_i \Hh^0(U,\Rr^ig_*\Q)$.

Let $T_{(\mathfrak{X}_U \times \mathfrak{X}'_U)/U}$ be the relative holomorphic tangent bundle.
Define the relative Todd class $\td_U(\mathfrak{X}_U \times \mathfrak{X}'_U)$ as the image of $\td\bigl(T_{(\mathfrak{X}_U \times \mathfrak{X}'_U)/U}\bigr)$ under the edge morphism $\oplus_i \Hh^i(\mathfrak{X}_U \times \mathfrak{X}'_U,\Q) \to \oplus_i\Hh^0(U,\Rr^ig_*\Q)$.
Then the \emph{relative twisted Mukai vector} of $\mathfrak{P}$ is
\[ v^{-\mathfrak{B}\boxplus\mathfrak{B}'}_U(\mathfrak{P}) \colonequals \ch^{-\mathfrak{B} \boxplus \mathfrak{B}'}_U(\mathfrak{P}) \cdot \sqrt{\td_U(\mathfrak{X}_U \times \mathfrak{X}'_U)} \in \bigoplus_i\Hh^0(U,\Rr^ig_*\Q). \]
In fact, since $\mathfrak{X}_U \to U$ and $\mathfrak{X}'_U \to U$ are families of K3 surfaces, $v^{-\mathfrak{B}\boxplus\mathfrak{B}'}_U(\mathfrak{P}) \in \bigoplus_i\Hh^0(U,\Rr^ig_*\Z)$;
this can be checked on fibers, where it follows from the discussion preceding \cite[Prop.~4.3]{MR2179782}.

Pullback of sections induces a unique map of $\delta$-functors
\[ \pi^*_{\mathfrak{X}_U} \colon \Rr^if_*\Z \to \Rr^ig_*\pi^{-1}_{\mathfrak{X}_U}\Z \simeq \Rr^ig_*\Z. \]
Moreover, by Verdier duality, pullback along $\pi_{\mathfrak{X}'_U}$ admits a dual pushforward map
\[ \pi_{\mathfrak{X}'_U,*} \colon \Rr^ig_*\Z \to \Rr^{i-4}f'_*\Z. \]
The cohomological twisted Fourier--Mukai transform $\Phi^{\Hh}_\mathfrak{P}$ associated to $\mathfrak{P}$ is given by
\[\begin{tikzcd}[column sep=huge]
  & \oplus_i \Rr^ig_*\Z \arrow[r,"\cup v^{-\mathfrak{B} \boxplus \mathfrak{B}'}_U(\mathfrak{P})"] & \oplus_i \Rr^ig_*\Z \arrow[dr,"\pi_{\mathfrak{X}'_U,*}"] & \\
  \oplus_i \Rr^if_*\Z \arrow[ur,"\pi^*_{\mathfrak{X}_U}"] \arrow[rrr,"\Phi^{\Hh}_\mathfrak{P}"] &&& \oplus_i \Rr^if'_*\Z.
\end{tikzcd}\]
It reduces to the usual twisted Fourier--Mukai transform for $\mathfrak{P}_t$ over each $t\in U$.

\begin{prop}\label{prop:signed_deform}
  The fiberwise cohomological realizations
  \[ \Phi^{\Hh}_{\mathfrak{P}_t} \colon \widetilde{\Hh}\bigl((\mathfrak{X}_U)_t,\mathfrak{B}_t,\Z\bigr) \xrightarrow{\sim} \widetilde{\Hh}\bigl((\mathfrak{X}'_U)_t,\mathfrak{B}'_t,\Z\bigr) \]
  are either signed for all $t \in U$ or non-signed for all $t \in U$.
\end{prop}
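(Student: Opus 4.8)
The plan is to reduce the statement to a connectedness argument. The key point is that being \emph{signed} is a discrete invariant, so it suffices to show that the function
\[ U \longrightarrow \{\pm 1\}, \qquad t \longmapsto \begin{cases} +1 & \text{if } \Phi^{\Hh}_{\mathfrak{P}_t} \text{ is signed}, \\ -1 & \text{if } \Phi^{\Hh}_{\mathfrak{P}_t} \text{ is non-signed}, \end{cases} \]
is locally constant on the connected complex manifold $U$; then it is globally constant and we are done. The whole construction above was engineered so that $\Phi^{\Hh}_{\mathfrak{P}}$ lives as a morphism of local systems in the relative, analytic setting, which is exactly what is needed to see this local constancy.

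First I would make the local systems explicit. By Ehresmann's theorem, $R^if_*\Z$, $R^if'_*\Z$, and $R^ig_*\Z$ are local systems on $U^{\an}$; shrinking to a simply connected open $V \subseteq U$ trivializes them, and over $V$ the isomorphisms $\widetilde{\Hh}\bigl((\mathfrak{X}_U)_t,\mathfrak{B}_t,\Z\bigr) \simeq \widetilde{\Hh}\bigl((\mathfrak{X}_U)_{t_0},\mathfrak{B}_{t_0},\Z\bigr)$ given by parallel transport are \emph{isometries} for the Mukai pairing, because the Mukai pairing on each fiber is the restriction of the topological pairing, which is flat. Likewise the relative twisted Mukai vector $v^{-\mathfrak{B}\boxplus\mathfrak{B}'}_U(\mathfrak{P}) \in \bigoplus_i \Hh^0(U,R^ig_*\Z)$ is a flat section, and the maps $\pi^*_{\mathfrak{X}_U}$ and $\pi_{\mathfrak{X}'_U,*}$ are maps of local systems, so $\Phi^{\Hh}_{\mathfrak{P}}$ is itself a morphism of local systems. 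Consequently, under the trivializations over $V$, the integral isometry $\Phi^{\Hh}_{\mathfrak{P}_t}$ is literally independent of $t \in V$ — it is the same matrix $M \in \Oo\bigl(\widetilde{\Hh}(X,B,\Z),\widetilde{\Hh}(X',B',\Z)\bigr)$ for all $t$, up to the fixed identifications coming from parallel transport.

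Second, I would check that the natural positive sign structures of Example~\ref{exmp:pos_sign_str} also vary continuously, so that ``signed'' is detected by a continuous, hence locally constant, quantity. Concretely, over $V$ choose a relative holomorphic two-form $\sigma_t \in \Hh^{2,0}\bigl((\mathfrak{X}_U)_t\bigr)$ and a relative K\"ahler class $\omega_t \in \Hh^{1,1}\bigl((\mathfrak{X}_U)_t\bigr)$ depending continuously (indeed real-analytically) on $t$; this is possible because the Hodge filtration varies holomorphically and the K\"ahler cone is open. Then the ordered basis
\[ \bigl(\Re(\exp(\mathfrak{B}_t)\sigma_t),\, \Im(\exp(\mathfrak{B}_t)\sigma_t),\, \Re(\exp(\mathfrak{B}_t + i\omega_t)),\, \Im(\exp(\mathfrak{B}_t + i\omega_t))\bigr) \]
of a maximal positive-definite subspace of $\widetilde{\Hh}\bigl((\mathfrak{X}_U)_t,\mathfrak{B}_t,\R\bigr)$ varies continuously in $t$, and similarly on the $\mathfrak{X}'_U$-side. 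The sign of the determinant of the orthogonal projection of $M$ applied to this moving frame against the target frame is therefore a continuous $\{\pm 1\}$-valued function of $t$ on $V$; by Definition~\ref{defn:or_pres} it equals $+1$ exactly when $\Phi^{\Hh}_{\mathfrak{P}_t}$ is signed. A continuous function to $\{\pm 1\}$ is locally constant, so the signedness function is locally constant on $U$, hence constant since $U$ is connected.

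The main obstacle, I expect, is the bookkeeping in the second step: one must confirm that the natural positive sign structure really is cut out by a continuously varying frame and that the chosen frame stays positive-definite throughout $V$ (for instance that $\omega_t$ can be kept K\"ahler — here one may shrink $V$, and it suffices to handle a single arbitrary point since the conclusion is local). One also has to be slightly careful that the parallel-transport identification of the $\mathfrak{B}_t$'s is compatible with the flat section $\mathfrak{B} \in \Hh^0(U,R^2f_*\Q)$, but this is exactly how $\mathfrak{B}$ was chosen. Once these continuity points are in place, the argument is the standard ``discrete invariant of a family over a connected base is constant,'' and there is nothing further to compute.
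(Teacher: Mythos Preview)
Your argument is correct and closely parallels the paper's: both show that signedness is locally constant by noting that $\Phi^{\Hh}_{\mathfrak{P}}$ is a morphism of local systems compatible with the Mukai pairing (hence fiberwise constant under parallel transport) and that the frames defining the natural positive sign structures vary continuously in $t$---the paper likewise appeals to Kodaira--Spencer for the differentiable deformation of K\"ahler classes. The one genuine difference is packaging. The paper works globally with the Grassmann bundle $\Gr^+_4(\widetilde{\mathcal{H}}) \to U$ and invokes the fact that the monodromy of a family of K3 surfaces lands in $\Oo^+$ (\cite[Prop.~7.5.5]{huybrechts2016lectures}) to conclude that this bundle has exactly two connected components over all of $U$, one of which globally carries the natural sign structure; the isomorphism $\Phi^{\Gr}_{\mathfrak{P}}$ then either matches the distinguished components or swaps them. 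Your approach instead trivializes on simply connected opens and argues locally, which sidesteps the monodromy citation entirely and is therefore slightly more elementary; the paper's Grassmann-bundle formulation buys a cleaner global statement (the two sign structures really do assemble into two global components) at the cost of that extra input.
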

\begin{proof}
  Let $\widetilde{\mathcal{H}} \colonequals \oplus_i\Rr^if_*\R \otimes_\R \mathcal{C}^\infty_U$ be the relative cohomology bundle for $\mathfrak{X}_U \to U$ and $\Gr^+_4\bigl(\widetilde{\mathcal{H}}\bigr) \to U$ its associated Grassmann bundle of positive-definite, oriented, 4-dimensional subspaces.
  Let $\widetilde{\mathcal{H}}'$ and $\Gr^+_4\bigl(\widetilde{\mathcal{H}}'\bigr) \to U$ be the corresponding bundles for $\mathfrak{X}'_U \to U$.
  As $\Phi^{\Hh}_\mathfrak{P}$ is compatible with the relative Mukai pairings $\bigl(\oplus_i\Rr^if^{(\prime)}_*\R\bigr) \otimes \bigl(\oplus_i\Rr^if^{(\prime)}_*\R\bigr) \to \Rr^4f^{(\prime)}_*\R \simeq \R$, it induces an isomorphism of bundles
  \[ \Phi^{\Gr}_\mathfrak{P} \colon \Gr^+_4\bigl(\widetilde{\mathcal{H}}\bigr) \xrightarrow{\sim} \Gr^+_4\bigl(\widetilde{\mathcal{H}}'\bigr). \]

  Choose a base point $u \in U$.
  The long exact sequence of homotopy groups for the fibration $\Gr^+_4\bigl(\widetilde{\mathcal{H}}\bigr) \to U$ ends in
  \[ \pi_1(U) \to \pi_0\Bigl(\Gr^+_4\bigl(\widetilde{\Hh}\bigl((\mathfrak{X}_U)_u,\R\bigr)\bigr)\Bigr) \to \pi_0\bigl(\Gr^+_4(\widetilde{\mathcal{H}})\bigr) \to \pi_0(U). \]
  The first map is induced by the monodromy representation $\rho \colon \pi_1(U) \to \Oo\bigl(\widetilde{\Hh}\bigl((\mathfrak{X}_U)_u,\R\bigr)\bigr)$.
  As, in the notation of Definition~\ref{defn:or_pres}, $\im(\rho) \subseteq \Oo^+\bigl(\widetilde{\Hh}\bigl((\mathfrak{X}_U)_u,\R\bigr)\bigr)$ (see e.g.\ \cite[Prop.~7.5.5]{huybrechts2016lectures}), $\pi_1(U) \to \pi_0\bigl(\Gr^+_4\bigl(\widetilde{\Hh}\bigl((\mathfrak{X}_U)_u,\R\bigr)\bigr)\bigr)$ is trivial.
  Since $U$ is connected, $\Gr^+_4(\widetilde{\mathcal{H}})$ must have two connected components, which correspond to the two connected components of $\Gr^+_4\bigl(\widetilde{\Hh}\bigl((\mathfrak{X}_U)_u,\R\bigr)\bigr)$.
  Any K\"ahler class on a fiber $(\mathfrak{X}_U)_t$ deforms differentiably over a small neighborhood of $t$ \cite[Thm.~15]{MR0115189} and both the holomorphic forms $\sigma_t$ and the $B$-fields $\mathfrak{B}_t$ vary differentiably with $t$.
  Thus, there is one connected component of $\Gr^+_4\bigl(\widetilde{\mathcal{H}}\bigr)$ which in every fiber picks out the connected component of $\Gr^+_4\bigl(\widetilde{\Hh}\bigl((\mathfrak{X}_U)_t,\mathfrak{B}_t,\R\bigr)\bigr)$ defining the natural positive sign structure.
  The same holds for $\mathfrak{X}'_U$.
  Hence, $\Phi^{\Hh}_{\mathfrak{P}_t}$ is either signed for all $t \in U$ or non-signed for all $t \in U$, depending on whether $\Phi^{\Gr}_\mathfrak{P}$ maps the connected components of $\Gr^+_4\bigl(\widetilde{\mathcal{H}}\bigr)$ and $\Gr^+_4\bigl(\widetilde{\mathcal{H}}'\bigr)$ that determine the respective positive sign structures into each other or not.
\end{proof}

\section{Proof of Theorem~\ref{thm:norhi}}\label{sect:pf}

We conclude with the proof of Theorem~\ref{thm:norhi}, which establishes the strong form of the derived global Torelli theorem for twisted K3 surfaces (Theorem~\ref{thm:stdgt}).
\norhi*
The strategy of the proof is explained in \S\ref{sect:intro}.
Note that whether or not $\Phi^{\Hh}_P$ is signed does not depend on the choice of $B$-field lifts.

\subsection{Initial reduction}\label{subsect:init_red}

In order to employ the deformation theory of \S\ref{sect:def}, we must first use \cite[Thm.~0.1]{MR2310257} to reduce to the case when $\rk P_x = 0$ and $\det P_x \simeq \mathcal{O}_{X'}$ for all $x \in X$ closed.
\begin{lem}\label{lem:reduction-id}
  Let $(X,\alpha)$ and $(X',\alpha')$ be two complex, projective twisted K3 surfaces with $B$-field lifts $B \in \Hh^2(X,\Q)$ and $B' \in \Hh^2(X',\Q)$.
  Let $\Phi_P \colon \Db(X,\alpha) \xrightarrow{\sim} \Db(X',\alpha')$ be a twisted Fourier--Mukai equivalence.
  If the induced Hodge isometry $\Phi^{\Hh}_P \colon \widetilde{\Hh}(X,B,\Z) \xrightarrow{\sim} \widetilde{\Hh}(X',B',\Z)$ is non-signed, then there is another Fourier--Mukai equivalence $\Phi_Q \colon \Db(X',\alpha') \to \Db(X',\alpha^{\prime -1})$ such that
  \[ \Phi^{\Hh}_Q = \id_{\Hh^0 \oplus \Hh^4} \oplus -\id_{\Hh^2} \colon \widetilde{\Hh}(X',B',\Z) \xrightarrow{\sim} \widetilde{\Hh}(X',-B',\Z). \]
\end{lem}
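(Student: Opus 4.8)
The plan is to build $\Phi_Q$ out of $\Phi_P$ together with a twisted Fourier--Mukai equivalence onto $(X',\alpha^{\prime-1})$ produced by the existence half of the derived global Torelli theorem. First note that $-B'$ is a $B$-field lift of $\alpha^{\prime-1}$, since $\exp\bigl((-B')^{0,2}\bigr)=\alpha^{\prime-1}$, so $\widetilde{\Hh}(X',-B',\Z)$ is a twisted Hodge structure of the twisted K3 surface $(X',\alpha^{\prime-1})$. By Example~\ref{exmp:non-signed_Hodge_isometry}, the map $\id_{\Hh^0\oplus\Hh^4}\oplus-\id_{\Hh^2}$ is a Hodge isometry $\widetilde{\Hh}(X',B',\Z)\xrightarrow{\sim}\widetilde{\Hh}(X',-B',\Z)$ (it carries $\exp(B')\sigma$ to $-\exp(-B')\sigma$) and it is \emph{non-signed} for the natural positive sign structures.

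Now consider the composition
\[ \Psi \colonequals \bigl(\id_{\Hh^0\oplus\Hh^4}\oplus-\id_{\Hh^2}\bigr)\circ\Phi^{\Hh}_P \colon \widetilde{\Hh}(X,B,\Z)\xrightarrow{\sim}\widetilde{\Hh}(X',-B',\Z). \]
Since $\Phi^{\Hh}_P$ is non-signed by hypothesis and $\id_{\Hh^0\oplus\Hh^4}\oplus-\id_{\Hh^2}$ is non-signed, their composition $\Psi$ is \emph{signed}: signedness is detected by a multiplicative $\pm 1$, and a composite of two non-signed isometries is signed. Thus $\Psi$ is a signed Hodge isometry between the twisted Hodge structures of $(X,\alpha)$ and $(X',\alpha^{\prime-1})$, so \cite[Thm.~0.1]{MR2310257} provides a twisted Fourier--Mukai equivalence $\Phi_R\colon\Db(X,\alpha)\xrightarrow{\sim}\Db(X',\alpha^{\prime-1})$ with $\Phi^{\Hh}_R=\Psi$ relative to the $B$-field lifts $B$ on $X$ and $-B'$ on $X'$.

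Set $\Phi_Q\colonequals\Phi_R\circ\Phi_P^{-1}\colon\Db(X',\alpha')\xrightarrow{\sim}\Db(X',\alpha^{\prime-1})$. The inverse of a twisted Fourier--Mukai equivalence, and the composition of two such, are again twisted Fourier--Mukai equivalences, and the cohomological realization is functorial under composition and inversion (the twisted Mukai vector of a convolution of kernels realizes the composite of the associated transforms); cf.\ \cite[Thm.~1.1]{MR2329310} and \cite[Ch.~16]{huybrechts2016lectures}. Consequently
\[ \Phi^{\Hh}_Q = \Phi^{\Hh}_R\circ\bigl(\Phi^{\Hh}_P\bigr)^{-1} = \Psi\circ\bigl(\Phi^{\Hh}_P\bigr)^{-1} = \id_{\Hh^0\oplus\Hh^4}\oplus-\id_{\Hh^2} \colon \widetilde{\Hh}(X',B',\Z)\xrightarrow{\sim}\widetilde{\Hh}(X',-B',\Z), \]
which is the asserted equivalence.

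The only genuinely delicate point is the bookkeeping of $B$-field lifts: one must verify that $\id_{\Hh^0\oplus\Hh^4}\oplus-\id_{\Hh^2}$, \emph{read as a map from $\widetilde{\Hh}(X',B',\Z)$ to $\widetilde{\Hh}(X',-B',\Z)$}, really is a Hodge isometry that is non-signed for the natural sign structures on both sides, and that $\Phi^{\Hh}_R$ is computed with respect to the lift $-B'$ on the target. Both are handled by Example~\ref{exmp:non-signed_Hodge_isometry} together with the observation that $-B'$ lifts $\alpha^{\prime-1}$; once $\Psi$ is seen to be signed, no further obstacle remains, and in particular the hypothesis that $\Phi^{\Hh}_P$ be non-signed is used in an essential way to make \cite[Thm.~0.1]{MR2310257} applicable.
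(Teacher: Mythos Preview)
Your proof is correct and follows essentially the same approach as the paper: compose $\Phi^{\Hh}_P$ with the non-signed Hodge isometry $\id_{\Hh^0\oplus\Hh^4}\oplus-\id_{\Hh^2}$ to obtain a signed Hodge isometry $\Psi$, lift $\Psi$ to a Fourier--Mukai equivalence $\Phi_R$ via \cite[Thm.~0.1]{MR2310257}, and set $\Phi_Q \colonequals \Phi_R \circ \Phi_P^{-1}$. The paper's argument is more terse and cites \cite[p.~917]{MR2179782} for the fact that $\Phi_Q$ is again of Fourier--Mukai type, but the substance is identical.
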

Recall that $\id_{\Hh^0 \oplus \Hh^4} \oplus -\id_{\Hh^2}$ is always non-signed by Example~\ref{exmp:non-signed_Hodge_isometry}.
\begin{proof}
  The Hodge isometry
  \[ \psi \colon \widetilde{\Hh}(X,B,\Z) \xrightarrow[\sim]{\Phi^{\Hh}_P} \widetilde{\Hh}(X',B',\Z) \xrightarrow[\sim]{\id_{\Hh^0 \oplus \Hh^4} \oplus -\id_{\Hh^2}} \widetilde{\Hh}(X',-B',\Z) \]
  is signed.
  By \cite[Thm.~0.1]{MR2310257}, we can thus find a Fourier--Mukai equivalence $\Phi_R \colon \Db(X,\alpha) \xrightarrow{\sim} \Db(X',\alpha^{\prime -1})$ with $\Phi^{\Hh}_R = \psi$.
  The composition $\Phi_Q \colonequals \Phi_R \circ (\Phi_P)^{-1}$ is another Fourier--Mukai equivalence \cite[p.~917]{MR2179782} whose induced action on cohomology is $\Phi^{\Hh}_Q = \id_{\Hh^0 \oplus \Hh^4} \oplus -\id_{\Hh^2}$.
\end{proof}
\begin{cor}\label{cor:trivial_det}
  It suffices to prove Theorem~\ref{thm:norhi} in the case where $\rk P_x = 0$ and $\det P_x \simeq \mathcal{O}_{X'}$ for all closed $x \in X$.
\end{cor}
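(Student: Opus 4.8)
The plan is to reduce the general case of Theorem~\ref{thm:norhi} to the special one by a short contradiction argument built on Lemma~\ref{lem:reduction-id}, after observing that the equivalence produced by that lemma automatically has a kernel with fiberwise trivial rank and determinant.

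Concretely, I would assume that Theorem~\ref{thm:norhi} is known whenever $\rk P_x = 0$ and $\det P_x \simeq \mathcal{O}_{X'}$ for all closed $x \in X$, take an arbitrary twisted Fourier--Mukai equivalence $\Phi_P \colon \Db(X,\alpha) \xrightarrow{\sim} \Db(X',\alpha')$ with $B$-field lifts $B$ and $B'$, and suppose toward a contradiction that $\Phi^{\Hh}_P$ is non-signed. Then Lemma~\ref{lem:reduction-id} provides a Fourier--Mukai equivalence $\Phi_Q \colon \Db(X',\alpha') \to \Db(X',\alpha^{\prime -1})$ with $\Phi^{\Hh}_Q = \id_{\Hh^0 \oplus \Hh^4} \oplus -\id_{\Hh^2}$, which is non-signed by Example~\ref{exmp:non-signed_Hodge_isometry}.

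The next step is to check that the kernel $Q$ of $\Phi_Q$ satisfies the hypotheses of the assumed special case. For a closed point $x \in X'$, the object $Q_x = \Phi_Q(\kappa^{\alpha'}(x))$ has $(-B')$-twisted Mukai vector equal to $\Phi^{\Hh}_Q$ applied to the $B'$-twisted Mukai vector of the twisted skyscraper $\kappa^{\alpha'}(x)$. The latter is the point class $(0,0,1) \in \Hh^0 \oplus \Hh^2 \oplus \Hh^4$, which is fixed both by multiplication with $\exp(B')$ and by $\id_{\Hh^0 \oplus \Hh^4} \oplus -\id_{\Hh^2}$; hence the $(-B')$-twisted Mukai vector of $Q_x$ is again $(0,0,1)$. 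Comparing the degree-$0$ and degree-$2$ parts, and using that $\sqrt{\td(X')}$ has vanishing degree-$2$ component, I obtain $\rk Q_x = 0$ and $\cc_1(Q_x) = 0$; since $\NS(X')$ is torsion free, this gives $\det Q_x \simeq \mathcal{O}_{X'}$. The special case of Theorem~\ref{thm:norhi} therefore applies to $\Phi_Q$ and shows that $\Phi^{\Hh}_Q$ is signed, contradicting the previous paragraph. Hence $\Phi^{\Hh}_P$ is signed, which is the assertion of Theorem~\ref{thm:norhi} in general.

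The only point that is not pure bookkeeping is the passage from the identity that the $(-B')$-twisted Mukai vector of $Q_x$ is $(0,0,1)$ to the honest conclusions $\rk Q_x = 0$ and $\det Q_x \simeq \mathcal{O}_{X'}$. I would settle this using the fiberwise compatibility of the twisted Chern character with its untwisted counterpart recorded in \cite[Prop.~1.2]{MR2179782}: the twisted and ordinary Chern characters agree in degree $0$ and, once the rank has been seen to vanish, also in degree $2$, and a rank-zero $\alpha^{\prime -1}$-twisted perfect complex has a genuine (untwisted) determinant line bundle.
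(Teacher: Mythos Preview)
Your proposal is correct and follows essentially the same argument as the paper: assume a non-signed $\Phi^{\Hh}_P$, invoke Lemma~\ref{lem:reduction-id} to pass to a kernel whose cohomological realization is $\id_{\Hh^0\oplus\Hh^4}\oplus -\id_{\Hh^2}$, compute that the fiberwise twisted Mukai vector is $(0,0,1)$, and read off $\rk=0$ and trivial determinant. The only cosmetic difference is that the paper unwinds the last step by writing $\ch^{-B'}(Q_x)=\ch(Q_x\otimes L^{-1})$ for the auxiliary $\alpha'^{-1}$-twisted differentiable line bundle $L$ determined by $-B'$, whereas you phrase it as ``rank zero forces the twisted and untwisted degree-$2$ Chern characters to agree and the determinant to be untwisted''; these are the same observation.
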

\begin{proof}
  Arguing by contradiction, suppose $\Phi^{\Hh}_P$ is non-signed.
  By Lemma~\ref{lem:reduction-id}, we may assume that
  \[ \Phi^{\Hh}_P = \id_{\Hh^0 \oplus \Hh^4} \oplus -\id_{\Hh^2} \colon \widetilde{\Hh}(X',B',\Z) \xrightarrow{\sim} \widetilde{\Hh}(X',-B',\Z). \]
  For all $x \in X$, this implies $v^{-B'}(P_x) = \Phi^{\Hh}_P\bigl(v^{B'}\bigl(\kappa^\alpha(x)\bigr)\bigr) = \Phi^{\Hh}_P\bigl((0,0,1)\bigr) = (0,0,1)$, where $P_x$ is as in Definition~\ref{defn:FM_fiber} and $\kappa^\alpha(x)$ denotes the $\alpha$-twisted skyscraper sheaf at $x$.
  As $v^{-B'}(P_x) = \ch^{-B'}(P_x) \cdot \sqrt{\td(X)} = \ch^{-B'}(P_x) \cdot (1,0,1)$, we deduce $\ch^{-B'}(P_x)_0 = 0$ and $\ch^{-B'}(P_x)_2 = 0$.

  As in \S\ref{subsect:signed_deformation}, the $B$-field $-B'$ determines an $\alpha^{\prime -1}$-twisted, differentiable line bundle $L$ such that $\ch^{-B'}(P_x) = \ch\bigl(P_x \otimes L^{-1}\bigr)$.
  In particular, $\ch^{-B'}(P_x)_0 = \rk(P_x)$ and $\ch^{-B'}(P_x)_2 = \cc_1\bigl(P_x \otimes L^{-1}\bigr) = \cc_1\bigl(\det P_x \otimes L^{-\rk P_x}\bigr)$.
  Hence, $\rk P_x = 0$ and $\cc_1(\det P_x) = 0$.
  As $X'$ is a K3 surface, this yields $\det P_x \simeq \mathcal{O}_{X'}$.
\end{proof}

\subsection{The deformation family}\label{subsect:deformation-family}

We now use the moduli space of primitively polarized K3 surfaces with level structure to construct a deformation family for $(X',\alpha')$.
Fix $d \in \Z_{>0}$, $\ell = e_1 + d \cdot f_1 \in \Lambda$, and a marking $\theta' \colon \Hh^2(X',\Z) \xrightarrow{\sim} \Lambda$ such that $(\theta')^{-1}(\ell)$ is ample.
For the notation surrounding the K3 lattice $\Lambda$, see \S\ref{sect:intro}.
Using $\theta'$, we identify the $B$-field $B'$ with an element of $\Lambda_\Q$, which we still call $B'$.
For some $n \in \Z_{>0}$, the pair $(X',\theta')$ determines a point $o$ of the moduli space $M_d[n]$ of primitively $d$-polarized K3 surfaces with level-$n$ structure.
We briefly recall the construction of $M_d[n]$ and refer to \cite[Ch.~6]{huybrechts2016lectures} for details.

As in \S\ref{subsect:small-Picard-rank}, we use the period map to identify the moduli space of marked, primitively $d$-polarized K3 surfaces as
\[ N_d = D_d \smallsetminus \bigcup_{\substack{\delta \in \Lambda_d \\ (\delta .\delta) = -2}} \PP(\delta^\perp) \]
inside $\PP(\Lambda_{d\C})$, where $D_d = \lbrace p \in \PP(\Lambda_{d\C}) \suchthat (p.p) = 0,\, (p.\bar{p}) > 0 \rbrace$ is the period domain associated with $\Lambda_d$.
We can choose $n \in \Z_{>0}$ such that the period
\[ \per(B') \colonequals \min \{ k \in \Z_{>0} \suchthat kB' \in \Lambda \} \]
divides $n$ and the congruence subgroup
\[ \Gamma(n) \colonequals \left\{ g \in \Oo(\Lambda_d) \suchthat \bar{g} = \id \in \Oo\bigl(A_{\Lambda_d}\bigr) \text{ and } g \equiv \id{} \bmod n \right\} \subset \Oo(\Lambda_d) \]
is torsion-free \cite[Prop.~17.4]{MR0244260}.
Here, $A_{\Lambda_d}$ denotes the discriminant group of $\Lambda_d$ and $\bar{g}$ denotes the automorphism of $A_{\Lambda_d}$ induced by $g$.
In that case, the natural action of $\Gamma(n)$ on $N_d$ is free and properly discontinuous; cf.\ e.g.\ \cite[Rmk.~6.1.10, Prop.~6.1.12]{huybrechts2016lectures}.
The quotient
\[ M_d[n] \colonequals \Gamma(n) \setminus N_d \]
is the \emph{moduli space of primitively $d$-polarized K3 surfaces with level-$n$ structure}.
By the Baily--Borel theorem \cite{MR0216035}, it is a smooth, quasi-projective algebraic variety.
Moreover, it is equipped with a universal family $f' \colon \mathfrak{X}' \to M_d[n]$, a relatively ample line bundle on $\mathfrak{X}'$, and a universal level-$n$ structure $\theta \colon \Rr^2f'_* \mu_n \xrightarrow{\sim} \Lambda / n\Lambda$, which as before is compatible with all structures.

Since $\per(B') \mid n$, the element $B' \in \Lambda_\Q$ determines a class $\overline{B'} \in \tfrac{1}{n}\Lambda / \Lambda \simeq \Lambda / n\Lambda$.
Let $\mathfrak{b}' \colonequals \theta^{-1}\bigl(\overline{B'}\bigr) \in \Hh^0(M_d[n],\Rr^2f'_*\mu_n)$.
As
\[ (\Rr^2 f'_* \mu_n)_o = \underset{\substack{(S,s) \to (M_d[n],o) \\ \text{\'etale}}}\colim \Hh^2(\mathfrak{X}'_S,\mu_n), \]
there is an \'etale neighborhood $\rho \colon (S,s) \to (M_d[n],o)$ and $\beta'_S \in \Hh^2(\mathfrak{X}'_S,\mu_n)$ such that the image of $\beta'_S$ under the edge morphism $\Hh^2(\mathfrak{X}'_S, \mu_n) \to \Hh^0(S,\Rr^2 f'_{S,*} \mu_n)$ is $\rho^* \mathfrak{b}'$.
The inclusion $\mu_n \hookrightarrow \G_m$ induces a homomorphism $\chi\colon \Hh^2(\mathfrak{X}'_S,\mu_n) \to \Hh^2(\mathfrak{X}'_S,\G_m)$.
Set $\alpha'_S \colonequals \chi(\beta'_S) \in \Hh^2(\mathfrak{X}'_S,\G_m)$.

\subsection{Conclusion}\label{subsect:conclusion}

We remain in the setup of \S\ref{subsect:deformation-family}.
First, we construct infinitely many reduced, effective divisors $Z_m$ on $M_d[n]$.
Fix $k \in \Z_{\geq 0}$ such that $\bigl((B'+k\cdot f_2)\,.\,e_2\bigr) > 0$.
For each $m \in \Z_{>0}$, define
\[ B_m \colonequals (4m+3)d \cdot e_2 + \frac{1}{(4m+3)d} \cdot f_3. \]
Write $B'+k\cdot f_2-B_m = \eta\cdot \ell+\zeta_m$ for some $\eta \in \Q$ and $\zeta_m \in \Lambda_{d\Q}$.
Put
\[ \widetilde{Z}_m \colonequals D_d \cap \PP\bigl(\zeta^\perp_m\bigr) \subset D_d, \]
where the orthogonal complement is taken in $\Lambda_{d\C}$.
This is the period domain associated with $\zeta^\perp_m \cap \Lambda_d$.
The intersection $\widetilde{Z}_m \cap N_d$ parameterizes all marked, primitively $d$-polarized K3 surfaces whose rational Picard group contains $\zeta_m$;
in particular, $B'$ and $B_m$ induce the same Brauer class.

Let $Z_m$ be the image of $\widetilde{Z}_m \cap N_d$ in the quotient $M_d[n] = \Gamma(n) \setminus N_d$.
When $m \gg 0$, this is a so-called \textit{special divisor}; cf.\ \cite[\S\S 2,4]{MR3156424}.

\begin{lem}
  For $m \gg 0$, the $Z_m$ are reduced, effective divisors on $M_d[n]$, and infinitely many of them are pairwise distinct.
\end{lem}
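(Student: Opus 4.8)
The plan is to reduce everything to the single estimate $(\zeta_m.\zeta_m)\to-\infty$ as $m\to\infty$, which I would establish first. Put $t_m\colonequals(4m+3)d$, so that $B_m=t_m\,e_2+t_m^{-1}\,f_3$. Since $e_2$ and $f_3$ lie in different hyperbolic summands of $\Lambda$, the class $B_m$ is isotropic, and it is orthogonal to $\ell$ and to $f_2$ except that $(B_m.f_2)=t_m$. Pairing the defining relation $B'+k\cdot f_2-B_m=\eta\cdot\ell+\zeta_m$ with $\ell$ and using $f_2,B_m\perp\ell$ together with $(\ell.\ell)=2d$, one finds $\eta=(B'.\ell)/(2d)$, independent of $m$. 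Hence
\[ (\zeta_m.\zeta_m)=(B'+k\cdot f_2-B_m)^2-\frac{(B'.\ell)^2}{2d}=(B'+k\cdot f_2)^2-2t_m\bigl((B'.e_2)+k\bigr)-\frac{2}{t_m}(B'.f_3)-\frac{(B'.\ell)^2}{2d}. \]
By the choice of $k$ we have $(B'.e_2)+k=\bigl((B'+k\cdot f_2).e_2\bigr)>0$, so $(\zeta_m.\zeta_m)=-2t_m\bigl((B'.e_2)+k\bigr)+O(1)\to-\infty$; in particular $\zeta_m\neq0$ for $m\gg0$.

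The divisor claim follows quickly. For $m\gg0$ the vector $\zeta_m\neq0$ satisfies $(\zeta_m.\zeta_m)<0$, so $\zeta_m^\perp\cap\Lambda_d$ has signature $(2,18)$ and the associated period domain $\widetilde Z_m$ is a nonempty subset of $D_d$ of codimension $1$, whose Zariski closure in $\PP(\Lambda_{d\C})$ is the quadric $\PP(\zeta_m^\perp)\cap\{(p.p)=0\}$ — irreducible, since the restricted form is non-degenerate. Deleting the countably many $(-2)$-hyperplanes leaves $\widetilde Z_m\cap N_d$ a nonempty analytic hypersurface: it is not contained in any single $\PP(\delta^\perp)$ with $(\delta.\delta)=-2$, for by the irreducibility just noted this would force $\PP(\zeta_m^\perp)=\PP(\delta^\perp)$, i.e.\ $\zeta_m\in\Q\delta$, hence the primitive generator of $\Q\zeta_m\cap\Lambda_d$ a $(-2)$-class, which the next paragraph excludes for $m\gg0$. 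Its image $Z_m$ in $M_d[n]=\Gamma(n)\backslash N_d$ is then a nonempty effective divisor, reduced since it is the image of the reduced hypersurface $\widetilde Z_m\cap N_d$ under the \'etale quotient map — the special divisor recalled above.

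For distinctness I would let $v_m\in\Lambda_d$ be the primitive generator of the rank-one group $\Q\zeta_m\cap\Lambda_d$ and write $\zeta_m=r_m v_m$ with $r_m\in\Q_{>0}$. The crucial point is that each fixed integer occurs as $(v_m.v_m)$ for only finitely many $m$. Indeed, $(v_m.v_m)$ has the same sign as $(\zeta_m.\zeta_m)<0$, so if $(v_m.v_m)=N$ then $N<0$ and $r_m^2=(\zeta_m.\zeta_m)/N\to+\infty$; on the other hand the component of $v_m=\zeta_m/r_m$ in the hyperbolic plane $\langle e_3,f_3\rangle$ equals $r_m^{-1}(B'.f_3)\,e_3+r_m^{-1}\bigl((B'.e_3)-t_m^{-1}\bigr)f_3$, whose two coefficients are integers (as $v_m\in\Lambda$) that tend to $0$; hence they vanish for $m\gg0$, forcing $(B'.f_3)=0$ and $(B'.e_3)=t_m^{-1}$, which can hold for at most one $m$ since $t_m$ is strictly increasing. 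This shows at once that $v_m$ is not a $(-2)$-class for $m\gg0$ (closing the gap above) and that $(v_m.v_m)$ takes infinitely many values. Finally, $(v_m.v_m)$ is an invariant of $Z_m$: if $Z_m=Z_{m'}$, pulling back along the covering map $N_d\to M_d[n]$ — a covering because $\Gamma(n)$ acts freely and properly discontinuously — gives $\Gamma(n)\cdot(\widetilde Z_m\cap N_d)=\Gamma(n)\cdot(\widetilde Z_{m'}\cap N_d)$. Comparing an irreducible component of $\widetilde Z_{m'}\cap N_d$ with the locally finite union $\bigcup_\gamma\gamma(\widetilde Z_m\cap N_d)$ (using the Baire category theorem and equality of dimensions) and passing to Zariski closures yields $\gamma\bigl(\PP(\zeta_m^\perp)\bigr)=\PP(\zeta_{m'}^\perp)$ for some $\gamma\in\Gamma(n)\subset\Oo(\Lambda_d)$, hence $\gamma(v_m)=\pm v_{m'}$ and $(v_m.v_m)=(v_{m'}.v_{m'})$. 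Together with the previous observation, infinitely many of the $Z_m$ are pairwise distinct.

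The computation in the first paragraph is routine; the main obstacle is the distinctness argument, where the difficulty is that one must control the \emph{primitive} class $v_m$ rather than $\zeta_m$ itself — the vector $\zeta_m$ carries a denominator growing with $m$ in the $f_3$-direction, so $(v_m.v_m)$ cannot simply be read off from $(\zeta_m.\zeta_m)$. Exploiting that the ``small'' $t_m^{-1}f_3$-coordinate of the integral vector $v_m$ must be a genuine integer yet tends to $0$ is what sidesteps having to pin down $r_m$ precisely.
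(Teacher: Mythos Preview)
Your proof is correct and follows essentially the same strategy as the paper: compute $(\zeta_m.\zeta_m)\to-\infty$, use integrality of the coordinates of the primitive generator $v_m=\widehat{\zeta}_m$ to control its self-intersection, and conclude distinctness from the $\Oo(\Lambda_d)$-invariance of $(v_m.v_m)$.

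The one place where the paper is noticeably slicker is the control of $(v_m.v_m)$. You argue by contradiction: if $(v_m.v_m)=N$ infinitely often then $r_m\to\infty$, whence the integral $e_3$- and $f_3$-coordinates of $v_m$ tend to $0$ and must vanish, forcing $(B'.e_3)=t_m^{-1}$ for infinitely many $m$. The paper instead observes in one line that the $f_3$-coefficient $\lambda_m/r_m$ of $v_m$ is a \emph{nonzero} integer (since $\lambda_m=(B'.e_3)-t_m^{-1}\neq 0$ for $m\gg0$), so $r_m\le|\lambda_m|$ and hence $(\widehat{\zeta}_m.\widehat{\zeta}_m)=(\zeta_m.\zeta_m)/r_m^2\le\lambda_m^{-2}(\zeta_m.\zeta_m)\to-\infty$. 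This simultaneously gives ``not a $(-2)$-class'' and ``infinitely many distinct values'' without the case analysis. Your route has the minor advantage of not needing $\lambda_m\neq 0$, but in practice both arguments exploit the same integrality constraint.

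On the divisor claim, the paper is more explicit than you are: it verifies local finiteness of the $\Gamma(n)$-orbit of $\widetilde{Z}_m\cap N_d$ (via proper discontinuity of the associated reflection group), so that $Z_m$ is a closed analytic divisor, and then invokes Baily--Borel together with Borel's extension theorem to upgrade this to an algebraic divisor. Your appeal to ``the special divisor recalled above'' covers this, and you do use local finiteness later, but if you want the argument to be self-contained you should spell out that closedness and algebraicity are not automatic from ``image under an \'etale quotient map''.
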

\begin{proof}
  When $m \gg 0$,
  \[ (\zeta_m\,.\,\zeta_m) = (B'+k\cdot f_2-\eta\cdot \ell)^2 - 2(4m+3)d \cdot ((B'+k\cdot f_2)\,.\,e_2) - \frac{2}{(4m+3)d} \cdot (B'\,.\,f_3) \]
  is negative.
  In that case, the lattice $\zeta^\perp_m \cap \Lambda_d \subset \Lambda_d$ has signature $(2,18)$, so $\widetilde{Z}_m$ is a closed submanifold of $D_d$ of codimension $1$.

  On the other hand, let $\lambda_m$ denote the coefficient of $f_3$ in $\zeta_m$ and choose $\widehat{\zeta}_m \in \Lambda_d$ primitive such that $\Q \cdot \widehat{\zeta}_m = \Q \cdot \zeta_m$.
  By construction, $\lambda_m \neq 0$ for $m \gg 0$;
  moreover, $\bigl(\widehat{\zeta}_m\,.\,\widehat{\zeta}_m\bigr) \leq \lambda^{-2}_m (\zeta_m\,.\,\zeta_m) < -2$ because the sequence $(\lambda_m)$ is bounded and $\lim_{m \to \infty} (\zeta_m\,.\,\zeta_m) = -\infty$.
  Since any $\delta \in \Lambda_d$ with $(\delta\,.\,\delta) = -2$ is primitive as well, $\Q \cdot \widehat{\zeta}_m \neq \Q \cdot \delta$, and thus $\PP\bigl(\zeta^\perp_m\bigr)$ is not contained in $\PP\bigl(\delta^\perp\bigr)$.
  In particular,
  \[ \widetilde{Z}_m \cap N_d = \bigl(D_d \cap \PP\bigl(\zeta^\perp_m\bigr)\bigr) \smallsetminus \bigcup_{\substack{\delta \in \Lambda_d\\(\delta.\delta) =-2}} \PP\bigl(\delta^\perp\bigr) \]
  is a closed submanifold of $N_d$ of codimension $1$.

  Fix $m \gg 0$.
  The group generated by the reflections associated to $\gamma(\zeta_m)$, where $\gamma$ ranges over $\Gamma(n)$, is a discrete subgroup of $\Oo(\Lambda_{d\R})$;
  hence, it acts properly discontinuously on $N_d$.
  In other words, every $x \in N_d$ has an open neighborhood $V$ such that $s(V) \cap V = \varnothing$ for all but finitely many such reflections $s$.
  Therefore, $V$ meets only finitely many $N_d \cap \PP(\gamma(\zeta_m)^\perp)$.
  That is, the set of these hyperplane sections is locally finite.
  In particular, the orbit $\Gamma(n)\cdot \bigl(\widetilde{Z}_m \cap N_d\bigr)$ is closed in $N_d$ and $Z_m$ is a reduced, effective analytic divisor on $M_d[n]$.

  Let $\Gamma(n)_{\zeta_m}$ be the stabilizer subgroup of $\Gamma(n)$ with respect to $\zeta_m$.
  The map $\widetilde{Z}_m \cap N_d \to Z_m$ factors through the quotient
  \[ Z'_m \colonequals \Gamma(n)_{\zeta_m} \setminus \bigl(\widetilde{Z}_m \cap N_d\bigr). \]
  The connected components of $\widetilde{Z}_m$ are bounded symmetric domains of type IV, so $Z'_m$ is a smooth, quasi-projective variety \cite{MR0216035}.
  As the map $Z'_m \to M_d[n]$ is algebraic \cite[Thm.~3.10]{MR0338456}, its image $Z_m$ is constructible.
  Since Zariski and analytic closure coincide for constructible subsets, $Z_m$ is Zariski-closed.
  
  Lastly, $\lim_{m \to \infty} (\zeta_m\,.\,\zeta_m) = -\infty$ and the boundedness of $(\lambda_m)$ guarantee that the sequence $\bigl((\widehat{\zeta}_m\,.\,\widehat{\zeta}_m)\bigr)$ has a strictly decreasing subsequence $\bigl((\widehat{\zeta}_{m_j}\,.\,\widehat{\zeta}_{m_j})\bigr)$.
  The corresponding orbits $\Gamma(n) \cdot \bigl(\widetilde{Z}_{m_j} \cap N_d\bigr)$, and hence the divisors $Z_{m_j}$, must be pairwise distinct.
\end{proof}

\begin{proof}[Proof of Theorem~\ref{thm:norhi}]
  By Corollary~\ref{cor:trivial_det}, we may assume that $\rk P_x = 0$ and $\det P_x \simeq \mathcal{O}_{X'}$ for all closed $x\in X$.
  We use the deformation family constructed in \S\ref{subsect:deformation-family}.
  By Proposition~\ref{prop:deformation} applied to $\mathfrak{X}'_S \to S$, we can refine $\rho$ by a further connected \'etale cover $(U,u) \to (S,s)$ and obtain
  \begin{enumerate}[label={\upshape(\roman*)}]
    \item a family of K3 surfaces $\mathfrak{X}_U \to U$ with $(\mathfrak{X}_U)_u = X$,
    \item a torsion class $\alpha^{-1}_U \in \Hh^2(\mathfrak{X}_U,\G_m)$ with $(\alpha^{-1}_U)_u = \alpha^{-1}$, and
    \item a kernel $\mathfrak{P} \in \Db(\mathfrak{X}_U \times \mathfrak{X}'_U, \alpha^{-1}_U \boxtimes \alpha'_U)$ (where $\mathfrak{X}'_U$ and $\alpha'_U$ are the base changes of $\mathfrak{X}'_S$ and $\alpha'_S$ to $U$)  with $\mathfrak{P}_u = P$ which induces an equivalence on each fiber.
  \end{enumerate}
  We now prove Theorem~\ref{thm:norhi} in two steps.
  \begin{description}[style=unboxed,leftmargin=*]
    \item[Step 1]
      We make the additional assumption that the twisted derived category of a K3 surface which corresponds to a very general point of $M_d[n]$ (with twist induced by $\mathfrak{b}'$) does not contain any spherical objects.
      Since the image of $\rho \colon (U,u) \to (M_d[n],o)$ is open, we can find $t \in U$ for which $\Db\bigl((\mathfrak{X}'_U)_t,(\alpha'_U)_t\bigr)$ has that property.
      As $\Phi_{\mathfrak{P}_t}$ is an equivalence, $\Db\bigl((\mathfrak{X}_U)_t,(\alpha_U)_t\bigr)$ does not contain any spherical objects either.
      By \cite[Cor.~3.19]{MR2388559}, $\Phi^{\Hh}_{\mathfrak{P}_t}$ is signed.
      Thus, Proposition~\ref{prop:signed_deform} shows that $\Phi^{\Hh}_P = \Phi^{\Hh}_{\mathfrak{P}_u}$ must also be signed.
    \item[Step 2]
      We reduce the general case to that of Step 1.
      As the union of infinitely many distinct, reduced, effective divisors is Zariski-dense and $\rho(U)$ is Zariski-open, we can choose $z \in U$ and $m \in \Z_{>0}$ such that $\rho(z) \in Z_m$.
      In particular, we can find a marking $\theta_z$ on the K3 surface $(\mathfrak{X}'_U)_z$ for which $\theta^{-1}_z(B'+k\cdot f_2-B_m) \in \Pic\bigl((\mathfrak{X}'_U)_z\bigr) \otimes_\Z \Q$.
      Hence, the $B$-fields $\theta^{-1}_z(B')$ and $\theta^{-1}_z(B_m)$ induce the same Brauer class on $(\mathfrak{X}'_U)_z$.

      On the other hand, whether an isometry between twisted Hodge structures is signed is independent of the chosen $B$-field lifts.
      By Proposition~\ref{prop:signed_deform}, $\Phi^{\Hh}_P = \Phi^{\Hh}_{\mathfrak{P}_u}$ is signed if and only if $\Phi^{\Hh}_{\mathfrak{P}_z}$ is so.
      It therefore suffices to prove Theorem~\ref{thm:norhi} for the Fourier--Mukai kernel $\mathfrak{P}_z$ and the $B$-field $B_m$ in place of $P$ and $B'$.

      As $(B_m\,.\,\ell) = (B_m\,.\,B_m) = 0$, the discussion before Lemma~\ref{lem:inters_form_LBd} shows that the $(1,1)$-part of the $B_m$-twisted Hodge structure of a very general marked, $d$-polarized K3 surface is
      \[ L^{B_m}_d = \begin{pmatrix} 0 & 0 & (4m+3)d \\ 0 & 2d & 0 \\ (4m+3)d & 0 & 0 \end{pmatrix}_{\textstyle \raisebox{3pt}{.}} \]
      But $L^{B_m}_d$ does not represent $-2$:
      when $d \geq 2$, use that the intersection form takes values in $2d\Z$, and when $d = 1$, observe that $-1$ is not a square modulo $4m+3$ by the quadratic reciprocity law for a prime factor which is $3$ modulo $4$.
      Since the Mukai vector of a spherical object is a $(-2)$-class, the $B_m$-twisted derived category of a very general point of $M_d[n]$ cannot contain any spherical objects. \qedhere
  \end{description}
\end{proof}

\begin{acknowledgements}
I would like to thank Daniel Huybrechts for suggesting that I think about these questions and for offering invaluable mathematical support and encouragement.
I am also grateful to Bhargav Bhatt for his indispensable help with various aspects of this paper.
Further thanks go to Yajnaseni Dutta, Takumi Murayama, Matt Stevenson, and two anonymous referees for long lists of comments on earlier drafts.
Lastly, I benefited from helpful conversations with Brandon Carter, Igor Dolgachev, Aron Heleodoro, Max Lieblich, Siddharth Mathur, Rainer Schulze-Pillot, Bertrand To\"en, and Gabriele Vezzosi.
\end{acknowledgements}

\end{document}